\documentclass[a4paper,reqno]{amsart}
\pdfoutput=1
\usepackage[T1]{fontenc}
\usepackage{lmodern}
\usepackage{dsfont}

\usepackage{amsmath,amsfonts,amssymb,amsthm,stmaryrd,bbm,graphicx,mathtools,enumerate}
\usepackage{mathrsfs}
\usepackage[utf8]{inputenc}  

\usepackage[english]{babel}
\mathtoolsset{showonlyrefs} 
\usepackage[tracking,spacing,kerning,babel]{microtype}
\usepackage{wasysym} 
\usepackage{color}
\usepackage{xcolor}
\usepackage{framed} 

\usepackage{wrapfig} 

\usepackage[normalem]{ulem}
\usepackage{soul}

\usepackage[final]{hyperref}   
\hypersetup{
    linktoc=page,
    linkcolor=red,          
    citecolor=blue,        
    filecolor=blue,      
    urlcolor=cyan,
   colorlinks=true           
}

\usepackage{lipsum} 

\usepackage[toc,page]{appendix}

\DeclareRobustCommand{\SkipTocEntry}[5]{}

\usepackage{multicol}

\newtheoremstyle{mine}
{\baselineskip}
{\baselineskip}
{\itshape}
{
}
{\bfseries}
{.}
{.5em}
{#1 #2\ifx#3\relax\else~(#3)\fi}

\theoremstyle{mine}

\newtheorem{theorem}{Theorem}
\numberwithin{theorem}{section}
\newtheorem{corollary}[theorem]{Corollary}
\newtheorem{proposition}[theorem]{Proposition}

\newtheorem{lemma}[theorem]{Lemma}

\newtheorem{definition}[theorem]{Definition}

\newtheorem{OP}{Open Problem} 
\numberwithin{equation}{section}

\theoremstyle{remark}
\newtheorem{remark}{Remark}

\theoremstyle{remark}

\colorlet{shadecolor}{blue!10}

\newcommand{\margin}[1]{\textcolor{magenta}{*}\marginpar[\textcolor{magenta} {  \raggedleft  \footnotesize  #1 }  ]{ \textcolor{magenta} { \raggedright  \footnotesize  #1 }  }}
\newcommand{\note}[2]{ \hskip 2cm  \textcolor{blue}{\large \bf #1 :}   \vline\,\vline \hskip 0.5 cm \parbox{10 cm}{ #2}  }
\newcommand{\margingreen}[1]{\textcolor{darkgreen}{*}\marginpar[\textcolor{darkgreen} {  \raggedleft  \footnotesize  #1 }  ]{ \textcolor{magenta} { \raggedright  \footnotesize  #1 }  }}
\renewcommand{\margin}[1]{}
\renewcommand{\margingreen}[1]{}
\renewcommand{\note}[2]{}

\let\qed=\QED

\newcommand{\mbb}{\mathbb} 
\newcommand{\mrm}{\mathrm}

\newcommand{\R}{\mathbb{R}}

\newcommand{\Z}{\mathbb{Z}}

\renewcommand{\S}{\mathbb{S}}
\newcommand{\B}{\mathbb{B}}

\def \eps{\varepsilon}

\def \curl{\operatorname{curl}}
\def \Flux{\operatorname{Flux}}
\def\div{\operatorname{div}}

\def\dist{\mathrm{dist}}

\def\floor#1{\lfloor{#1}\rfloor}

\def\P{\mathbb{P}} 
\def\E{\mathbb{E}} 
\def\md{\mid}

\def\FK#1#2#3{{\def\md{\bigm| } \P_{#1}^{\,#2}  \bigl[  #3 \bigr]}}

\def \p {{\partial}}

\def\<#1{\langle #1\rangle}

\newcommand{\green}[1]{{\color{green}#1}}

\definecolor{darkgreen}{rgb}{0,0.6,0.05}
\newcommand{\dgreen}[1]{{\color{darkgreen}#1}}

\def\bi{\begin{itemize}}  
\def\ei{\end{itemize}}
\def\bnum{\begin{enumerate}} 
\def\enum{\end{enumerate}}
\def\ni{\noindent}
\def\bf{\bfseries}

\def\DE{\mathrm{DE}}

\title[Lattice based harmonic maps]{Regularity theory for lattice based harmonic maps and low temperature Gibbs measures of \\
the spin-$O(N)$ model}
\author{Christophe Garban}
\author{Nathan de Montgolfier}
\date{}

\address
{Universit\'e Claude Bernard Lyon 1, CNRS UMR 5208, Institut Camille Jordan, 69622 Villeurbanne, France}
\email{christophe.garban@gmail.com}

\address
{Universit\'e Claude Bernard Lyon 1, CNRS UMR 5208, Institut Camille Jordan, 69622 Villeurbanne, France}
\email{demontgolfier@math.univ-lyon1.fr}

\newcommand{\newsection}[2][]{%
  \if\relax\detokenize{#1}\relax
    \section{#2}%
  \else
    \section{\for{toc}{#1}\except{toc}{#2}}%
  \fi
}

\begin{document}

\begin{abstract}
The purpose of this note is twofold. First, we revisit the classical regularity theory of harmonic maps in the setting of maps defined on a lattice $\Z^d$ instead of $\R^d$ or some other manifold. The target space of our harmonic maps will be spheres $\S^{N-1}, N\geq 2$. A harmonic map $\sigma : \Z^d \to \S^{N-1}$ is a map which in any finite subgraph $\Lambda$ minimizes the internal Dirichlet energy $\frac 1 2 \sum_{i \sim j \in \Lambda} \|\sigma_i -\sigma_j\|^2$ given the boundary condition $\sigma_{\md \p  \Lambda}$. 

One key difference between our setting and the continuum is that our main Hölder regularity statement in a large domain $\Lambda\subset \Z^{d=2}$ does not require any regularity assumption on the map $\sigma$ restricted to the boundary of the domain. For example, when $d=2$ and $N=3$, the Hölder regularity induced in the bulk of the domain holds uniformly in all boundary configurations $\sigma \in (\S^{2})^{\p \Lambda}$ (this is in contrast with the continuum which requires extensions in suitable Sobolev spaces). 

The fact we obtain results uniformly in the boundary conditions allows us to extend our results for positive temperatures. Namely, as $T \searrow 0$, we obtain a local regularity theory for all infinite volume Spin $O(N)$ Gibbs measures, at least when $d=2$ and $N\geq 3$. 

We also discuss implications of our analysis in dimensions $d\geq 3$. 
\end{abstract}

\maketitle
\tableofcontents

\section{Introduction}

Harmonic maps from $\R^d$ to compact target manifolds such as $\S^{N-1}, N \geq 2$ have been extensively studied in the literature since the seminal work of Morrey \cite{morrey1948problem}.
One key aspect of these harmonic maps is their regularity (or lack of regularity) as studied in the works \cite{morrey1948problem,schoen1982regularity,hardt1988stable,helein1991regularite,evans1991partial,riviere2007conservation}. When the base space is continuous, say $\Omega \subset \R^d$, one may distinguish at least three definitions of {\em harmonic maps:} 
\bnum
\item {\em Minimizing harmonic maps:} maps $u \in W^{1,2}(\Omega, N)$ that minimize the Dirichlet energy $E(u) = \frac{1}{2} \int_\Omega |\nabla u|^2 \, dx$ among all Sobolev maps sharing the same boundary data.
\item {\em Stationary harmonic maps:} maps that are critical points of the Dirichlet energy under both target variations and domain variations (diffeomorphisms of $\Omega$), yielding a divergence-free stress-energy tensor and monotonicity formulas.
\item {\em Weakly harmonic maps:} maps in $W^{1,2}(\Omega, N)$ that are critical points of the energy with respect to target variations, i.e. satisfying the Euler--Lagrange system $-\Delta u = A(u)(\nabla u, \nabla u)$ in the sense of distributions.
\enum
One has then $(1) \subset (2) \subset (3)$ and the regularity property of harmonic maps typically depend on which class is considered. We refer to the monographs \cite{helein2002harmonic,lin2008analysis} for background on harmonic maps and their regularity properties.

\medskip

The first purpose of this paper is to revisit the regularity properties of harmonic maps when the base space is a lattice $\Z^d$ instead of $\R^d$ (or a finite subgraph $\Lambda \subset \Z^d$ subject to boundary conditions rather than a domain $\Omega \subset \R^d$). Here is a precise definition. First, if $\Lambda\subset \Z^d$, we shall denote by $\p \Lambda$  its inner boundary 
\begin{align*}  
\p \Lambda & := \{ i \in \Lambda \text{ s.t. } \exists j\in \Lambda^c \text{ with } i\sim j \}\,.
\end{align*}

\begin{definition}[Lattice based harmonic maps]  
Let $d\geq 2$, $N\geq 2$ and $\Lambda \subset \Z^d$ be a finite subset. Given $\bar \sigma : \p \Lambda \to \S^{N-1}$, a harmonic map in $\Lambda$ with boundary conditions $\bar \sigma$ is any configuration $\sigma \in (\S^{N-1})^\Lambda$ which belongs to 
\begin{align*}  
\mathrm{argmin} \left\{ \sum_{i,j \in \Lambda, \, i\sim j } \|\sigma_i - \sigma_j\|_2^2  \;\;  \Big|  \;\;   \sigma \in (\S^{N-1})^\Lambda, \text{ s.t. } \sigma_{\p \Lambda} = \bar \sigma \right\}\,.
\end{align*}
(N.B. as opposed to linear harmonic functions (with values in $\R^N$), uniqueness is not always satisfied). 

In the case of the full lattice $\Z^d$, we will say that a map $\sigma : \Z^d \to \S^{N-1}$ is a (global) harmonic map if for any finite $\Lambda \subset \Z^d$, $\sigma$ is a harmonic map in $\Lambda$ with boundary data $\sigma_{| \p \Lambda}$. 
\end{definition}

\begin{figure}[!htp]
\begin{center}
\includegraphics[width=1\textwidth]{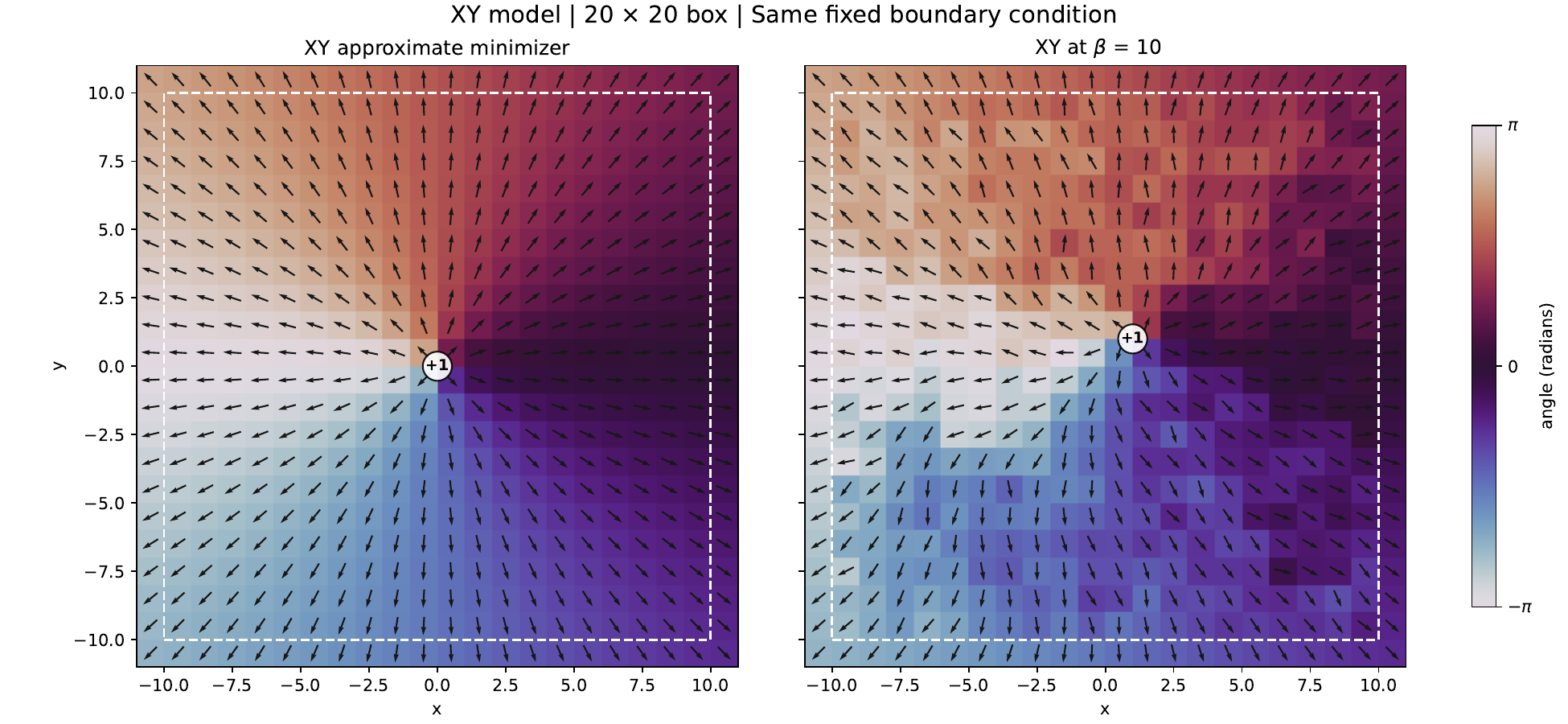}
\end{center}
\caption{On the left, an approximation of a ground state of the XY model, on the right a simulation of the XY model at low temperature ($\beta = 10$). The code for these simulations was obtained by using ChatGPT-6 Astra. }\label{f.XY}
\end{figure}

One major difference in our present lattice setting is that our results will not require any condition on the boundary conditions (no need of Sobolev traces at $\p \Omega$ etc.) and will hold uniformly over all boundary conditions $\sigma \in (\S^{N-1})^{\p \Lambda}$. To handle those, in the case $d=2$ which is the main motivation of this work, our proof will be strongly inspired by a combination of \cite{morrey1948problem} and \cite{hardt1988stable}.

\smallskip

If $d\geq 2$ and $n\geq 1$, $\Lambda_n$ will correspond to the finite box $\{-n,\ldots,n\}^d \subset \Z^d$. 

Our first main result holds in the planar $d=2$ case:
\begin{theorem}\label{th.H}
Let $d=2$ and $N\geq 3$. Then there exists $\alpha=\alpha(N)>0$ such that uniformly in $n\geq 1$ and $\bar \sigma \in (\S^{N-1})^{\p \Lambda_n}$, any harmonic map extension $\sigma$ of $\bar \sigma$ inside $\Lambda_n$ satisfies the following Hölder regularity bound. (N.B. There may exist several harmonic map extensions of $\bar \sigma$ and the bound is valid for all for them). 
\begin{align}\label{e.H}
\| \sigma(x) - \sigma(y)\|_{2, \R^N} \leq 2 \left(\frac{ \dist(x,y) }{ \dist(x, \p \Lambda_n) \wedge \dist(y, \p \Lambda_n)  } \,
\wedge 1 \right)^\alpha
\end{align}
\end{theorem}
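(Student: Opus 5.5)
The plan is a discrete Morrey-type argument: show that the Dirichlet energy in boxes decays like a power of the box size, $E(B_r(x)) \le C (r/R)^{2\alpha}$, and convert this into Hölder continuity through a discrete Campanato/Morrey lemma. Here $E(B) = \frac12\sum_{i\sim j\in B}\|\sigma_i-\sigma_j\|^2$. The constant $2$ in \eqref{e.H} is automatic when the ratio is $\ge 1$, since $\|\sigma(x)-\sigma(y)\|\le 2$. So we may assume $\dist(x,y)\le R:=\dist(x,\p\Lambda_n)\wedge\dist(y,\p\Lambda_n)$ and work in the box $B_R(x)\subset\Lambda_n$.

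\emph{Step 1 (energy bound uniform in the boundary data).} The first step replaces the Sobolev assumption on the trace with a comparison argument. Since the map is $\S^{N-1}$-valued and $d=2$, the number of edges in $B_r$ is $O(r^2)$, so $E(B_r)\le C r^2$ trivially. That alone is useless, so instead we pick a good annulus and use the minimality of $\sigma$ inside $B_{r}$.

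For any $r\le R$, use Fubini over the radii $\rho\in[r/2,r]$ to find a square circle $\p B_\rho$ on which the tangential energy satisfies $\sum_{\p B_\rho}\|\nabla\sigma\|^2\le \frac{C}{r} E(B_r\setminus B_{r/2})$.

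\emph{Step 2 (hole-filling comparison).} This is the key step. We build a competitor in $B_\rho$ equal to $\sigma$ on $\p B_\rho$ and compare energies. Let $L^2:=\rho\sum_{\p B_\rho}\|\nabla\sigma\|^2$; the oscillation of $\sigma$ on $\p B_\rho$ is then at most $L$. Take the discrete harmonic (linear) extension $h$ in $B_\rho$. When $L$ is small, $|h|\ge 1/2$, and the normalized map $h/|h|$ has energy at most $C\,L^2$, using the discrete $H^{1/2}\le H^1$ trace inequality in 2D. When $L$ is not small, we instead use Hardt–Lin's radial/averaging trick, exploiting $N\ge3$. Namely, extend by $\sigma_{\p B_\rho}(\rho\,\theta)$ composed with a projection $\pi_a(v)=(v-a)/|v-a|$ from a point $a\in B^N$, and average over $a$. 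Since $N-1\ge 2$, the quantity $|v-a|^{-2}$ is integrable in $a\in B^N$, which gives an extension with energy at most $C\rho\sum_{\p B_\rho}\|\nabla\sigma\|^2$ regardless of the topology. This requires $N\ge3$: for $N=2$, winding obstructs the construction.

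In both cases minimality yields
\[
E(B_{r/2})\le E(B_\rho)\le C\,\big(E(B_r)-E(B_{r/2})\big).
\]
Widman hole-filling then gives $E(B_{r/2})\le \theta E(B_r)$ with $\theta=C/(C+1)<1$. The lattice scale causes no harm: the argument runs down to $r\sim 1$, and for $r\le C_0$ the bound is trivial.

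\emph{Step 3 (iteration and Morrey).} Iterating from $R$ down to $r$, and starting from the crude bound $E(B_{R})\le CR^2$, is not good enough on its own. So we iterate the hole-filling at scale-invariant level instead. The Step 2 inequality holds for the normalized energy $E(B_r)$, which is dimensionless in $d=2$, so we need an a priori bound $E(B_{R/2})\le C$. This follows by applying the extension construction of Step 2 at scale $R$ on a good circle: $E(B_{R/2}) \le C R\sum_{\p B_\rho}\|\nabla\sigma\|^2$, and Fubini over $\rho \in [R/2, R]$ together with the trivial $O(R^2)$ edge count only gives $O(R)$. To fix this, replace the Fubini choice by the pointwise bound: a circle of perimeter $\sim R$ with unit vectors has $\rho\sum\|\nabla\sigma\|^2\le C\rho\cdot\rho$. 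Hence the correct route is the averaged-projection extension, whose energy is controlled by $\int_{B^N}\sum_{\p B_\rho}|\nabla(\pi_a\circ\sigma)|^2\,da$ times $\rho$, with no gain. This step, a uniform $O(1)$ energy bound at the start, is the main obstacle. I would try to get it by comparing with the map $\pi_a(\text{harmonic extension of }\sigma_{\p B_R})$, averaged over $a$. The linear harmonic extension has interior energy $E(B_{R/2})\le C$ uniformly, because interior gradients of bounded harmonic functions decay like $1/R$. Given $E(B_{R/2})\le C$, we get $E(B_r(z))\le C\theta^{k}$ for $r=2^{-k}R$. The discrete Morrey lemma (bounded energy decay implies oscillation bounded via a telescoping sum of averages plus Poincaré on boxes) then gives $\|\sigma(x)-\sigma(y)\|\le C(\dist(x,y)/R)^\alpha$. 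Finally, shrink $\alpha$ so that $C$ can be replaced by $2$.
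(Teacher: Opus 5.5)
\textbf{There is a genuine gap: the bound $E(B_{R/2})\le C$, uniform in the boundary data, is never established.}

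Your Steps 1--2 essentially reproduce the paper's Lemma \ref{l.dyadic}: a good radius chosen by averaging, the harmonic extension with the linear bound \eqref{e.LessOpt}, the averaged Hardt--Kinderlehrer--Lin projection, and hole-filling. Converting energy decay into oscillation by a Campanato/Morrey telescoping argument is a legitimate substitute for the paper's unit-flow argument.

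One small correction: the $0$-homogeneous extension $\sigma_{\p B_\rho}(\rho\theta)$ costs an extra factor $\log\rho$ in $d=2$. What works is the averaged projection of the \emph{linear} harmonic extension, as you say later.

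The gap is exactly the point you flag as the main obstacle: the scale-invariant bound $E(B_{R/2})\le C$, uniform in $\bar\sigma$. Your proposed fix does not work. The map $\pi_a\circ h$ does have $O(1)$ energy in $B_{R/2}$. But minimality of $\sigma$ only compares \emph{total} energies on $B_R$. For rough boundary data the competitor carries energy of order $R$ near $\p B_R$. So you only learn $E(\sigma;B_R)\lesssim R$, and nothing about $E(\sigma;B_{R/2})$.

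Hole-filling cannot compensate. With $H(k):=E(B_k)$, the linear estimate gives $H(k)\le Ck\,(H(k)-H(k-1))$, hence only $H(R/2)\le 2^{-1/C}H(R)$. Likewise, the dyadic factor $\theta^j$ cannot beat an initial value of order $R$ unless $\theta<1/2$, which hole-filling with a large constant does not provide.

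The missing ingredient is the paper's Lemma \ref{l.ext} combined with the discrete ODE of Theorem \ref{th.DE}. The key is to exploit $\|\psi\|_\infty\le 1$.
\begin{itemize}
\item Write the harmonic extension's energy through the Poisson kernel: $\DE_{\Lambda_k}(\psi)=\DE_{\p\Lambda_k}(\psi)+\tfrac12\sum_{x,z\in\p\Lambda_k}K(x,z)(\psi_x-\psi_z)^2$, with $K(x,z)\le C|x-z|^{-2}$.
\item Split at $|x-z|=A$. Pairs at distance at most $A$ cost $\lesssim A\,\DE_{\p\Lambda_k}$, and pairs beyond cost $\lesssim k\|\psi\|_\infty^2/A$.
\item Optimizing in $A$ gives $\DE_{\Lambda_k}(\psi)\le C\sqrt{k\,\DE_{\p\Lambda_k}(\psi)}$.
\item After the projection lemma and minimality, $H(k)\le C\sqrt{k\,(H(k)-H(k-1))}$, that is, $\frac{1}{H(k-1)}-\frac{1}{H(k)}\ge \frac{c}{k}$.
\item Summing over \emph{every} integer $k\in[R/2,R]$, not just dyadic scales, gives $H(R/2)\le C/\log 2$ regardless of $H(R)$.
\end{itemize}
This superlinear ODE forgets its initial condition, and that is precisely what yields uniformity over all $\bar\sigma\in(\S^{N-1})^{\p\Lambda_n}$. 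Once you have this bound in $z+\Lambda_{R}\subset\Lambda_n$ for each relevant center $z$, your hole-filling iteration and Morrey step complete the argument.
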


\ni
Theorem \ref{th.H} has the following immediate consequence. 
\begin{corollary}
When $d=2$ and $N\geq 3$, the only global harmonic maps are the constant ones. 
\end{corollary}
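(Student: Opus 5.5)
The corollary follows from Theorem \ref{th.H} applied to boxes whose size tends to infinity. Let $\sigma:\Z^2\to\S^{N-1}$ be a global harmonic map and fix two points $x,y\in\Z^2$. For every $n$ large enough that $x,y\in\Lambda_n$ lie well inside the box, the restriction $\sigma_{|\Lambda_n}$ is by definition a harmonic map in $\Lambda_n$ with boundary data $\bar\sigma:=\sigma_{|\p\Lambda_n}$. Since Theorem \ref{th.H} holds uniformly in $\bar\sigma$ and for every harmonic extension, we may apply \eqref{e.H} with this particular extension.

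Applying the theorem gives
\begin{align*}
\|\sigma(x)-\sigma(y)\|_2 \leq 2\left(\frac{\dist(x,y)}{\dist(x,\p\Lambda_n)\wedge\dist(y,\p\Lambda_n)}\wedge 1\right)^{\alpha}.
\end{align*}
The numerator $\dist(x,y)$ does not depend on $n$. The denominator is at least $n-\max(\|x\|_\infty,\|y\|_\infty)$, which tends to $+\infty$ as $n\to\infty$. Because $\alpha=\alpha(N)>0$ does not depend on $n$, the right-hand side tends to $0$. Hence $\sigma(x)=\sigma(y)$, and as $x,y$ were arbitrary, $\sigma$ is constant.

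For the converse, a constant map has zero Dirichlet energy in every $\Lambda$. It is therefore a minimizer for its own boundary data, so it is a global harmonic map. There is no real obstacle in this argument. The only point that needs care is that the constant $\alpha$ and the prefactor $2$ in Theorem \ref{th.H} are uniform in both $n$ and the boundary condition, and this uniformity is exactly what the theorem provides.
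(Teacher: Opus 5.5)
Your proposal is correct and follows the paper's route exactly: the paper presents this corollary as an immediate consequence of Theorem \ref{th.H}, obtained by restricting the global harmonic map to $\Lambda_n$, applying the bound \eqref{e.H}, which is uniform in $n$ and in the boundary data, and letting $n\to\infty$. Your remark that constant maps are global harmonic maps (they have zero Dirichlet energy) supplies the trivial converse, which the paper leaves implicit.
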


\ni
Interestingly, this result is wrong when $N=2$. Namely,
\begin{proposition}\label{pr.ExS1}
When $d=2$ and $N=2$, there exist non-constant global harmonic maps $\sigma : \Z^2 \to \S^1$.
Moreover, one can choose $\sigma$ such that for any $n\ge 1$, $\DE^{\Lambda_n}(\sigma) \ge c \log(n)$ from numerical constant $c> 0$.
\end{proposition}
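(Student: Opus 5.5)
We construct $\sigma$ as a lattice vortex: $\sigma_x = e^{i\theta(x)}$, with $\theta$ a discrete analogue of the argument function $\arg(x - x_0)$ centred at the dual point $x_0 = (\tfrac12,\tfrac12)$. Since harmonicity is a minimization property on every finite box, a naive choice of $\theta$ will not do. We therefore obtain the global map by compactness from finite-volume minimizers carrying a topological constraint.

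\textbf{Step 1 (finite-volume vortex minimizers).} For each $n$, consider all $\sigma\in(\S^1)^{\Lambda_n}$ whose boundary values on $\p\Lambda_n$ are $\bar\sigma^{(n)}_x = (x-x_0)/|x-x_0|$. Take a minimizer $\sigma^{(n)}$ of the internal Dirichlet energy under this boundary condition. It is a harmonic map in $\Lambda_n$, and its restriction to any $\Lambda\subset\Lambda_n$ is harmonic in $\Lambda$ with its own boundary data, because minimizers restrict to minimizers. The boundary data has winding number $1$ around $\p\Lambda_n$. Winding is defined edgewise: when adjacent spins are not antipodal, the angle increment is the unique one in $(-\pi,\pi)$.

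\textbf{Step 2 (compactness).} Since $(\S^1)^{\Z^2}$ is compact in the product topology, a diagonal subsequence gives $\sigma^{(n_k)}\to\sigma$ pointwise. Fix a finite $\Lambda$ and a competitor $\tau$ agreeing with $\sigma$ on $\p\Lambda$. Modify $\sigma^{(n_k)}$ inside $\Lambda$ by $\tau$, adjusted at the boundary layer. The error is $o(1)$ because only finitely many edges are involved and the spins converge. Minimality of $\sigma^{(n_k)}$ then passes to the limit, so $\sigma$ is a global harmonic map.

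\textbf{Step 3 (non-triviality and the $\log n$ bound).} This is the main obstacle: we must show that the vortex does not escape to infinity, so that $\sigma$ is non-constant with energy growing like $\log n$.

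\emph{Lower bound.} For any configuration, if the discrete winding of $\sigma$ along the square loop $\p\Lambda_r$ is nonzero, then the angle increments along that loop of $\approx 8r$ edges sum to $2\pi$ in absolute value. Cauchy--Schwarz gives $\sum_{\p\Lambda_r}\|\sigma_i-\sigma_j\|^2 \gtrsim 1/r$, using $\|\sigma_i-\sigma_j\|\asymp|\Delta\theta|$ for increments in $(-\pi,\pi)$. Summing over $r\le n$ yields $\DE^{\Lambda_n}(\sigma)\ge c\log n$ once every loop around $x_0$ has nonzero winding.

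\emph{Localizing the vortex.} For the finite minimizers, the winding around $\p\Lambda_n$ equals $1$, so a vortex (a plaquette with nonzero vorticity) lies inside. It must stay within $O(1)$ of $x_0$ uniformly in $n$. To show this, I would use the symmetries of the boundary data under the symmetry group of the square centred at $x_0$: the data is equivariant under rotations by $\pi/2$ and reflections, combined with the same isometries of $\S^1$. I would try to argue that an equivariant minimizer exists by symmetrization, choosing among minimizers the one with most symmetry, or perhaps by restricting from the start to equivariant configurations and checking that criticality suffices. An equivariant configuration has total vorticity $1$ with vortices arranged symmetrically about $x_0$, which forces the net winding around every symmetric loop $\p\Lambda_r$ to be $1$.

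The delicate point is that restricting to equivariant configurations only gives minimality among symmetric competitors, not true minimality. So I would fall back on a direct energy comparison. A configuration whose vortex sits at distance $R$ from $x_0$ has energy at least $\pi\log n + c\log R - C$. This follows because loops centred at the true vortex both inside and outside the box each contribute, and the mismatch with the boundary data costs $\log R$. Meanwhile the explicit vortex $e^{i\arg(x-x_0)}$ has energy $\pi\log n + O(1)$. Together these bound $R=O(1)$. Once $R=O(1)$, the winding around $\p\Lambda_r$ equals $1$ for $r\ge C$, uniformly in $n$. Windings are locally determined by finitely many spins, so when no adjacent pair is antipodal this passes to the pointwise limit; that gives $\DE^{\Lambda_n}(\sigma)\ge c\log n$ and in particular that $\sigma$ is non-constant.
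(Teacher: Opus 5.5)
There is a genuine gap in Step 3, at the point where you localize the vortex, and the whole construction rests on it. Your Steps 1 and 2 are fine: restrictions of minimizers are minimizers, and minimality passes to pointwise limits because only finitely many edges are involved. Your Cauchy--Schwarz bound along the loops $\p\Lambda_r$ is essentially the paper's final step.

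\textbf{Why the localization fails.} If the vortex of $\sigma^{(n)}$ sits at distance $R_n\to\infty$ from $x_0$, the pointwise limit may well be constant. Your key claim, that a vortex at distance $R$ from $x_0$ forces energy at least $\pi\log n + c\log R - C$, is false. For degree-one Dirichlet data the vortex is repelled by the boundary but only very weakly confined at the centre. In the continuum, on a disk of radius $n$ with data $e^{i\theta}$, the renormalized energy of a single vortex at $a$ is $-\pi\log(1-|a|^2/n^2)$. So moving it a distance $R\ll n$ from the centre costs only about $\pi R^2/n^2$. A logarithmic penalty, of order $\pi\log(n/\rho)$, appears only when the distance $\rho$ to the boundary is $o(n)$. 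Your competitor $e^{i\arg(x-x_0)}$ is only known to have energy $\pi\log n+O(1)$. Comparison at that precision can at best show the vortex stays at distance $\geq cn$ from $\p\Lambda_n$, not within $O(1)$ of $x_0$.

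The symmetrization route does not help either. As you note, equivariant minimizers are not known to be true minimizers, and in any case $\Lambda_n$ is not symmetric about $(\tfrac12,\tfrac12)$.

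There is also a hidden assumption: you speak of ``the'' vortex. Nothing in your argument rules out extra vortex--antivortex pairs in the minimizer. These could make the net winding around some loops $\p\Lambda_r$ vanish and destroy the $\log n$ bound.

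\textbf{How the paper avoids both issues.} It works on the torus $\Z^2/n\Z^2$ with an extra vertex $v_0$ joined to every site, with $\gamma((v_0,x))=-1/n^2$. It uses the dual language of one-forms $\gamma$ with $|\gamma|\le\frac12$ and integral divergence, minimizing $\frac12\sum(1-\cos 2\pi\gamma(e))$. Theorem 4.2 of Garroni--Petrache--Spadaro (clearing out dipoles) produces a minimizer with a single vertex of divergence $1$. Translation invariance of the torus then lets one move that vertex to the origin \emph{before} letting $n\to\infty$. The integral-divergence constraint passes to the limit, and $\sigma$ is obtained by integrating $e^{2i\pi\gamma}$ on the dual lattice. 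The flux through every $\p\Lambda_k$ equals $1$, which gives the $\log n$ bound by your own Cauchy--Schwarz argument.

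\textbf{Possible repair within your framework.} You could recentre $\sigma^{(n)}$ at its vortex; for this you only need the vortex to stay at distance $\gtrsim n$ from the boundary. You would still have to import a single-vortex result of Garroni--Petrache--Spadaro type. Tracking vorticity through lifts (increments in $[-\pi,\pi]$) rather than through the spins would also remove your caveat about antipodal neighbours in the limit.
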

\ni
The proof of this proposition, which we found with the help of chatGPT, is given in Section \ref{s.VortexS1}.

\medskip


We now turn to case of sphere-valued harmonic maps when the dimension of the base lattice $\Z^d$ is any $d \geq 2$ and in particular on the control of their Dirichlet energy. Let us first set up some notations. 
Let $\Lambda \subset \Z^d$. If $\psi : \Lambda \to \R^{N}$ (it may not be $\S^{N-1}$ valued) and $A \subset \Lambda $, we denote the Dirichlet energy of $\psi$ measured inside the set $A$ by
\begin{align}\label{e.DA}
\DE_A = \DE_A(\psi) := \frac 1 2 \sum_{e \in E(A)} \| \nabla_e \psi \|^2\,,
\end{align}
where $E(A) = \{ \{i,j\} \mid i \in A, j \in A, i \sim j  \}$.
Our second main result is.

\begin{theorem}\label{th.DE}
Let $d\geq 2$ and $N\geq 2$. There exists a universal constant $C = C(d,N)$ such that for any $n\geq 1$,  uniformly over boundary condition $\bar \sigma \in  (\S^{N-1})^{\p \Lambda_n}$,  harmonic maps $\sigma$ with boundary data $\bar \sigma$ satisfy

 \begin{align}  
\text{DE}_{\Lambda_{n/2}}(\sigma) \le \begin{cases} C n^{d-2} & \text{if } N \ge 3 \\  C n^{d-2} \log^2(n) & \text{if } N = 2 \,.\end{cases} 
\end{align}

Furthermore, when $d=2$ and $N \ge 3$, we also have the following estimate at smaller scales $L  \leq \frac n 2$:

\begin{align}\label{e.DElog}
\DE_{\Lambda_L }(\sigma) \le \frac{C}{\log(n/L)}\,.
\end{align}
 
\end{theorem}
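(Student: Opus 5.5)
The plan is to compare $\sigma$ with explicit competitors. Harmonic maps minimize energy only given the boundary data on $\p\Lambda$, so I will build competitors by keeping $\sigma$ near $\p\Lambda_n$ and modifying it inside. Let $M\in[n/2,n]$, to be chosen. On $\Lambda_M$, define $\psi$ to equal $\sigma$ on $\p\Lambda_M$ and set
\[
\psi(x) = \frac{\phi(x)}{\|\phi(x)\|}, \qquad \phi(x) := \sigma\!\Bigl(\pi\bigl(M x/|x|_\infty\bigr)\Bigr) - a,
\]
where $\pi$ is a nearest-lattice-point projection onto $\p\Lambda_M$ and $a\in B(0,1/2)\subset\R^N$ is a point. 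This is the lattice analogue of the Hardt--Lin projection trick from \cite{hardt1988stable}: first take the homogeneous (radial) extension of the boundary data, then project radially from a random point $a$. Minimality of $\sigma$ in $\Lambda_M$ gives $\DE_{\Lambda_{n/2}}(\sigma)\le \DE_{\Lambda_M}(\psi)$.

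To bound $\DE_{\Lambda_M}(\psi)$, I will use $\|\nabla(\phi/\|\phi\|)\|\le C\|\nabla\phi\|/\|\phi\|$. This needs some care on the lattice, where it holds for discrete gradients provided $\|\phi\|$ at both endpoints is comparable to its minimum on the edge; the exceptional edges can be handled crudely, since $\|\nabla\psi\|\le 2$. Then average over $a$ uniform in $B(0,1/2)$. Since $\E_a\|\phi(x)-a\|^{-2}$ is bounded uniformly when $N\ge3$ (the singularity $|y|^{-2}$ is integrable in $\R^N$ for $N\ge3$), this yields
\[
\E_a \DE_{\Lambda_M}(\psi)\le C\,\DE_{\Lambda_M}(\text{radial extension}).
\]
The radial extension of shell data at distance $r$ has gradient of size about $(M/r)\,|\nabla_{\text{tangential}}\sigma|$. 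Summing over shells $r\le M$ gives a factor $\sum_{r} r^{d-1}(M/r)^2/M^{d-1}\asymp M$ when $d\ge3$, and $\asymp M\log M$-type behaviour when $d=2$. Here I expect to instead use the crude bound $|\nabla|\le 2/r$, which gives $\sum_r r^{d-1}r^{-2}\le C M^{d-2}$ for $d\ge3$ and $C\log M$ for $d=2$.

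\textbf{Main obstacle: getting the bound uniform in the boundary data.} The crude bound is not uniform, so the real difficulty is to make the competitor's energy controlled without any regularity of $\sigma$ on $\p\Lambda_M$. My remedy is to avoid the radial extension altogether and use $\phi :=$ the $\R^N$-valued discrete harmonic extension (or a smoothed average) of $\sigma|_{\p\Lambda_n}$ inside $\Lambda_n$, and then apply Hardt--Lin projection only on an interior region, glued to $\sigma$ by a cutoff on an annulus $A=\Lambda_n\setminus\Lambda_{n-1}$. Interior gradient estimates for the linear harmonic extension give $|\nabla\phi(x)|\le C/\dist(x,\p\Lambda_n)$. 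Hence $\sum_{x}|\nabla \phi|^2\le \sum_{k\le n} k^{-2}\,\#\{x:\dist=k\}\le C n^{d-2}$ for $d\ge3$, and $C\log n$ for $d=2$. The projection from a random $a$ then costs only a constant factor when $N\ge3$. For $N=2$ the weight $\E_a\|\phi-a\|^{-2}$ diverges logarithmically, so I will instead truncate, using $\min(\|\phi-a\|^{-2}, r^{2})$ near vortices; I expect this to produce the extra $\log^2 n$.

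To avoid paying for the boundary layer, I use the fact that only $\DE_{\Lambda_{n/2}}$ needs bounding. By a Fubini/pigeonhole choice of $M\in[n/2,3n/4]$ I can take a good shell $\p\Lambda_M$ and compare inside $\Lambda_M$ with the projected harmonic extension of $\sigma|_{\p\Lambda_M}$. The energy on each shell is unknown a priori, so I will iterate: a Caccioppoli-type inequality bounds the energy on $\Lambda_{n/2}$ by shell energies of a larger box, ending with $\DE\le Cn^{d-2}$, with $\log$ replaced by $O(1)$ once I account for shells properly.

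For \eqref{e.DElog} in $d=2$, I will use a hole-filling argument. Let $D(r):=\DE_{\Lambda_r}(\sigma)$, and compare $\sigma$ on $\Lambda_r$ with the projected competitor built from the shell at $r$. With $N\ge3$ this yields $D(r)\le C\,r\,D'(r)$, i.e.\ $D(r)\le C\,(D(2r)-D(r))$ after averaging over the shell. Summing over dyadic scales $L,2L,\dots,n/2$, of which there are $\asymp\log(n/L)$, and using that $D(n/2)\le C$ from the first part, gives the bound. The monotone sum yields $\log(n/L)\,D(L)\le \sum_k D(2^kL)\le C\,D(n/2)\cdot$const, and the bound follows once I establish the stronger form $D(r)\le C\,\bigl(D(2r)-D(r)\bigr)$ for $N\ge3$. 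I regard establishing this inequality, i.e.\ the shell-competitor estimate for arbitrary lattice boundary data, as the crux of the proof.
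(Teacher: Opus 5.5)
There is a genuine gap in the first (and main) part: nothing in your plan produces a bound on $\DE_{\Lambda_{n/2}}(\sigma)$ that is independent of $\bar\sigma$. There are two concrete problems.

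First, the harmonic-extension count is wrong. The number of $x\in\Lambda_n$ with $\dist(x,\p\Lambda_n)=k$ is $\asymp (n-k)^{d-1}\asymp n^{d-1}$ for $k\le n/2$; you counted as if $k$ were the distance to the centre. Hence $\sum_k k^{-2}\,\#\{\dist=k\}\asymp n^{d-1}$ in every $d\ge 2$. This is not an artefact: for oscillating data, $\sigma$ itself carries energy $\asymp n^{d-1}$ near $\p\Lambda_n$. Moreover, minimality of $\sigma$ only compares \emph{total} energies on a box whose boundary values are kept. So bulk bounds on the linear extension $\phi$ of $\sigma_{|\p\Lambda_n}$ say nothing about $\sigma$ on $\Lambda_{n/2}$. (Also, $\Lambda_n\setminus\Lambda_{n-1}=\p\Lambda_n$ leaves no room for a cutoff.)

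Second, the ``Caccioppoli/good shell'' iteration is linear. Choosing a good shell $\p\Lambda_M$ and using an estimate of the form $\DE_{\Lambda_M}(\text{extension})\le CM\,\DE_{\p\Lambda_M}$ yields only $\DE_{\Lambda_{n/2}}\le C\,\DE_{\Lambda_{3n/4}\setminus\Lambda_{n/2}}$, i.e.\ $D(n/2)\le\lambda D(n)$. The only a priori input at the top scale is $D(n)\lesssim n^{d-1}$, which is sharp for some data, and there are $O(1)$ scales between $n/2$ and $n$. So this cannot get below order $n^{d-1}$, and the same holds for any homogeneous inequality of this type, even with unit-width shells.

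The missing idea is the \emph{sublinear} estimate of Lemma~\ref{l.ext}: for $\psi$ harmonic in $\Lambda_k$, $\DE_{\Lambda_k}(\psi)\le Ck^{(d-1)/2}\|\psi\|_\infty\sqrt{\DE_{\p\Lambda_k}(\psi)}$. It comes from three ingredients:
\begin{itemize}
\item the identity $\DE_{\Lambda_k}(\psi)=\DE_{\p\Lambda_k}(\psi)+\tfrac12\sum_{x,z\in\p\Lambda_k}K(x,z)(\psi_x-\psi_z)^2$;
\item the Poisson kernel bound $K(x,z)\le C|x-z|^{-d}$;
\item a split at a cutoff $A$: pairs at distance $\le A$ are handled via paths in $\p\Lambda_k$, the rest via $\|\psi\|_\infty\le 1$.
\end{itemize}
It exploits the boundedness of the target. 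Combined with Lemma~\ref{lemma:projection_on_sphere_trhough_a_good_point} and minimality of $\sigma$ in \emph{every} $\Lambda_k$, it gives, for $H(k)=\DE_{\Lambda_k}(\sigma)$, the Riccati-type inequality $H(k)-H(k-1)\ge \DE_{\p\Lambda_k}(\sigma)\ge cH(k)^2/k^{d-1}$. Equivalently $1/H(k-1)-1/H(k)\ge c\,k^{1-d}$. Summing from $L$ to $n$ gives $1/H(L)\ge c\sum_{k=L}^n k^{1-d}$, which forgets the initial value $H(n)$. That is exactly the uniformity over $\bar\sigma$. It gives $Cn^{d-2}$ and, for $d=2$, \eqref{e.DElog} at once. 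For $N=2$ the $\log|D|$ loss of the projection enters squared in this inequality, whence $\log^2 n$; your ``truncation near vortices'' does not say how this loss propagates.

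Your hole-filling route to \eqref{e.DElog} is sound once $D(n/2)\le C$ is known. This requires the shell competitor to be the projected discrete harmonic extension controlled by \eqref{e.LessOpt}, since the radial extension loses a $\log$ in $d=2$, as you noticed. This is the paper's Lemma~\ref{l.dyadic}, and it even gives polynomial decay. But it rests on the first part, which your proposal does not establish.
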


\begin{remark}  
As we shall see in Section \ref{sec:holder_reg}, the estimate~\eqref{e.DElog}, which is uniform over boundary conditions, will be one of the steps of the proof of Theorem \ref{th.H}. 
\end{remark}

\begin{remark}  
As we will see in Section \ref{s.VortexS1}, in the case $N=2$, one can construct harmonic maps in $\Lambda_n \subset \Z^d$ whose Dirichlet energy inside $\Lambda_{n/2}$ is greater than $n^{d-2} \log(n)$. It is not clear to us whether the factor $\log^2(n)$ is an artefact of our proof or whether it is in fact optimal.  See Open Problem \ref{op.logn}. 
\end{remark}

\begin{remark}  
When $d\geq 3$, a direct consequence of the above result is that harmonic maps do not produce two-dimensional ``cliffs'' with large gradient in the bulk. 
\end{remark}

As mentioned earlier, the above two results (Theorem \ref{th.H} and Theorem \ref{th.DE}) have well-known analogues in the literature on the regularity of harmonic maps with a continuous base space $\sigma: \R^d \to \S^{N-1}$. (See the references given above). 
\medskip

We now move to our third main result which is more probabilistic in nature and does not have -- to our knowledge -- existing analogues in the case of a continuous base space. It involves a small temperature $T$ (and the small thermal fluctuations it induces). Our main objective is to obtain local regularity results for the low temperature Gibbs states of the so-called spin-$O(N)$ model which we shall now define (the cases $N=2$ and $N=3$ correspond respectively to the celebrated $XY$ and classical Heisenberg model). 

\medskip


The spin-$O(N)$ model is an important statistical physics model which assigns random spins $\sigma_i \in \S^{N-1}$ to the vertices $i \in \Z^d$. Its name comes from the underlying symmetry group $O(N)$ acting on $\S^{N-1}$ (and justifies our choice of notation $\S^{N-1}$ instead of $\S^N$ through this work).  It can easily be defined on a finite box $\Lambda \subset \Z^d$ subject to boundary conditions $\bar \sigma \in (\S^{N-1})^{\p \Lambda}$. 
As usual in statistical physics, the parameter $\beta$ corresponds to the \textbf{inverse temperature} ($\beta = T^{-1}$) and large values of $\beta$ correspond to low temperature states. The link with harmonic maps is clear when $\beta \nearrow \infty$ (or equivalently $T \searrow 0$). 

\begin{definition}[Gibbs measure in a finite box]  
Let $d\geq 2, N\geq 2$,  $0\leq \beta<\infty$ and $\Lambda \subset \Z^d$ be a finite box. Fix some boundary conditions (b.c.), $\bar \sigma \in(\S^{N-1})^{\p \Lambda}$. The \textbf{spin-$O(N)$ model} in $\Lambda$, with inverse temperature $\beta$ and b.c. $\bar \sigma$ is the following probability measure on $\sigma \in (\S^{N-1})^{\Lambda}$:
\begin{align*}  
d \FK{\Lambda,\beta}{\bar \sigma}{d\sigma} := \frac 1 {Z_{\Lambda, \beta}^{\bar \sigma}}
\exp\Big( - \frac \beta 2 \sum_{i\sim j \in \Lambda} \| \sigma_i - \sigma_j\|_{2, \R^N}^2 \Big) \prod_{i\in \Lambda \setminus \p \Lambda}  \lambda_{\S^{N-1}}(d\sigma_i) \prod_{g\in \p \Lambda} \delta_{\bar \sigma(g)}(d \sigma(g))\,.
\end{align*}
Here $\lambda_{\S^{N-1}}$ is the uniform measure on the sphere $\S^{N-1}$ and $Z_{\Lambda, \beta}^{\bar \sigma}$ is the finite normalisation constant (called partition function) to ensure that $\P_{\Lambda,\beta}^{\bar \sigma}$ is a probability measure. 

\bi
\item If $N=2$, this is a random collection of $\S^1$-valued spins called the $XY$ model. 
\item If $N=3$, this is a random collection of $\S^2$-valued spins called the classical Heisenberg model. 
\ei
\end{definition}

\begin{definition}[Infinite volume limits]  
If $d\geq 2, N\geq 2$ and $\beta \geq 0$ are fixed, an infinite volume limit of the spin-$O(N)$ model is any subsequential scaling limit of the above definition under a choice of increasing domains, for example $\Lambda_{k}
:=\{-k, \ldots, k\}^d \nearrow \Z^d$.  

By compactness of $\S^{N-1}$, it is easy to see that such subsequential limits exist. Especially when $N\geq 3$, the questions of convergence and uniqueness are known to be very challenging even when $d=2$ where uniqueness is conjectured to hold at all $T>0$. See for example the discussion in \cite{aru2025percolation}. 
\end{definition}

We now state our main result on the local structure of these Gibbs measures.

\begin{theorem}\label{th.SG}
Let $d=2$ and $N \geq 3$. 
There exist exponents $\kappa, \tau$, such that for all $\delta>0$, there exists an even  $L=L(\delta)  \asymp  \delta^{-\kappa}$ and $\beta(\delta) \asymp \delta^{-\tau}$ s.t for all $\beta \geq \beta(\delta)$ and uniformly over all boundary condition $\bar \sigma \in (\S^{N-1})^{\p \Lambda_L}$, one has 
\begin{align*}  
\FK{\Lambda_L, \beta}{\bar \sigma}{ \exists i\sim j  \in \Lambda_{L/4} \, \text{ s.t. } \|\sigma_i - \sigma_j\|_{2,\R^N} > \delta} < \delta \,.
\end{align*}
\end{theorem}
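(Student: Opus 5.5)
Our plan is to move from zero temperature to small positive temperature via a compactness argument, then extract the quantitative exponents from an energy comparison. The deterministic input is Theorem~\ref{th.H}. Since the bound~\eqref{e.H} is uniform in the boundary data, any harmonic map $\sigma^*$ on $\Lambda_L$ satisfies, for neighbours $i\sim j\in\Lambda_{L/2}$,
\[
\|\sigma^*_i-\sigma^*_j\|\le 2(4/L)^\alpha .
\]
We choose $L\asymp \delta^{-\kappa}$ with $\kappa$ slightly larger than $1/\alpha$, so that this is at most $\delta/2$. The remaining task is to show that, for $\beta$ large, the Gibbs measure $\P^{\bar\sigma}_{\Lambda_L,\beta}$ concentrates with overwhelming probability near configurations that are ``almost harmonic'' in a quantitative sense. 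We then need a stability version of the regularity statement for such almost minimizers.

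\textbf{Step 1 (energy concentration).} Write $H(\sigma)=\frac12\sum_{i\sim j}\|\sigma_i-\sigma_j\|^2$ and $H_{\min}=\min H$ over extensions of $\bar\sigma$. Fix a minimizer $\sigma^*$. Perturbing each interior spin independently within a cap of radius $\beta^{-1/2}$ around $\sigma^*_i$ raises the energy by $O(L^2)$, since $H$ has bounded gradient and Hessian in the spin variables, up to an $O(\beta^{-1/2}L^2)$ error from the first-order term. This gives the partition function bound $Z\ge e^{-\beta H_{\min}-CL^2}\,(c\beta^{-1/2})^{(N-1)|\Lambda_L|}$. Combining it with the trivial upper bound on the volume of $(\S^{N-1})^{\Lambda_L}$ yields
\[
\P\big[H(\sigma)-H_{\min}>\eta\big]\le \exp\big(-\beta\eta + C L^2\log\beta\big),
\]
which is smaller than $\delta/2$ once $\beta\ge C L^2\log\beta/\eta+\log(2/\delta)/\eta$. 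This estimate is uniform in $\bar\sigma$.

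\textbf{Step 2 (stability of regularity for near-minimizers).} We must show that if $H(\sigma)\le H_{\min}+\eta$ with $\eta=\eta(\delta,L)$ small, then every edge in $\Lambda_{L/4}$ has gradient at most $\delta$. This is the main obstacle: a small global energy excess does not a priori force closeness to some harmonic map, because minimizers need not be unique.

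\emph{Soft route.} Argue by contradiction and compactness. The space $(\S^{N-1})^{\Lambda_L}\times(\S^{N-1})^{\p\Lambda_L}$ is compact for fixed $L$, and a limit of $\eta_k$-minimizers with $\eta_k\to0$ is an exact minimizer for the limiting boundary data. The gradient bound for exact minimizers therefore transfers to $\eta$-minimizers, with $\eta$ depending on $\delta$ and $L$ only, but without an explicit rate.

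\emph{Quantitative route, needed for the power laws.} First try to make the argument quantitative directly: check that the Morrey/Hardt--Lin style proof behind Theorem~\ref{th.H} is robust under an additive energy error. The comparison maps used in Section~\ref{sec:holder_reg} give inequalities of the form $\DE_{\Lambda_r}(\sigma)\le \theta\,\DE_{\Lambda_{2r}}(\sigma)$. For an $\eta$-minimizer these become $\DE_{\Lambda_r}\le \theta\,\DE_{\Lambda_{2r}}+\eta$. Iterating over the $O(\log L)$ dyadic scales should give $\DE_{\Lambda_r}\lesssim (r/L)^{2\alpha}+C\eta$, hence $\max_{e\subset\Lambda_{L/4}}\|\nabla_e\sigma\|\lesssim L^{-\alpha}+\sqrt\eta$.

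\textbf{Step 3 (choice of parameters).} Choose $\eta\asymp\delta^2$ and $L\asymp\delta^{-\kappa}$ with $\kappa\approx 1/\alpha$. Step 1 then requires $\beta\gtrsim L^2\log\beta/\delta^2$, i.e.\ $\beta(\delta)\asymp\delta^{-\tau}$ for any $\tau>2\kappa+2$; the logarithm is absorbed by slightly increasing $\tau$. On the event of Step 1 every edge of $\Lambda_{L/4}$ has gradient at most $\delta$, and its complement has probability less than $\delta$, uniformly over $\bar\sigma$.

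The main difficulty is checking that the excess-energy version of the decay estimate~\eqref{e.DElog} and of the Hölder iteration holds. In particular, the Hardt--Lin type comparison maps in $d=2$, $N\ge3$ must only use the minimality of $\sigma$ through an energy comparison inequality, which is exactly what tolerates an additive $\eta$.
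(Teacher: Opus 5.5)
Your proposal is correct. Its quantitative route shares the deterministic core of the paper's proof: the ODE of Subsection~\ref{ss.ODE} gives $\DE_{\Lambda_{L/2}}=O(1)$ uniformly in $\bar\sigma$, and then the contraction of Lemma~\ref{l.dyadic} is iterated around each $x\in\Lambda_{L/4}$, both run with an error term. The probabilistic input, however, is organised differently.

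The paper explores $\p\Lambda_L,\p\Lambda_{L-1},\dots$ in a Markovian way. At every scale it applies Lemma~\ref{l.T} to the conditional law in $\Lambda_k$, getting $\DE_{\Lambda_k}\le 4\,GS_{\Lambda_k}(\sigma_{\md\p\Lambda_k})+\gamma^2$ with high conditional probability. This forces a stopping scale $U$ and union bounds over scales and over edges.

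You instead use a single global estimate $\P[H-H_{\min}>\eta]\le e^{-\beta\eta+CL^2\log\beta}$. You combine it with an observation you should state explicitly: an $\eta$-minimizer on $\Lambda_L$ is an $\eta$-minimizer on every sub-box $x+\Lambda_j\subset\Lambda_L$ for that sub-box's own boundary data. Indeed, a competitor on the sub-box extends to a competitor on $\Lambda_L$ that changes only edges inside the sub-box.

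Minimality enters Section~\ref{sec:holder_reg} only through $\DE_{\Lambda_j}(\sigma)\le GS_{\Lambda_j}(\sigma_{\md\p\Lambda_j})$, so the step you flag as the main difficulty is immediate:
\begin{itemize}
\item the ODE holds with its constant divided by $4$ as long as $H(k)\ge 2\eta$, and stopping once $H(k)<2\eta$ replaces $U$;
\item the recursion $\DE_{x+\Lambda_{R/2}}\le\lambda\,\DE_{x+\Lambda_R}+\eta$ iterates to $C\lambda^k+\eta/(1-\lambda)$.
\end{itemize}

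Your version is shorter, has a purely additive error, and treats all edges on a single event. It gives $\beta\gtrsim L^2\log(\cdot)/\delta^2$, the same scaling as Proposition~\ref{th.SGquant}. The paper's Markovian version pays only the entropy of the box currently being explored. It also lets the tolerance $\gamma$ vary with the scale, which the authors point to as a way of improving $\tau$. A single global $\eta\lesssim\delta^2$, paid against entropy $L^2$, cannot do this.

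Your soft compactness route, which uses Theorem~\ref{th.H} as a black box, is also valid but gives no explicit $\beta(\delta)$.

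One small fix is needed in Step~1. An $O(\beta^{-1/2}L^2)$ first-order energy error would cost $\beta^{1/2}L^2$ in the exponent, which is not absorbed into $CL^2\log\beta$. You need either that the first-order term vanishes by criticality of $\sigma^*$ in the interior spins, or caps of radius $\beta^{-1}$; either way the stated bound then holds.
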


It has the following corollary on the structure of all infinite volume limits of the spin-$O(N)$ measure on $\Z^2$ (see Corollary \ref{c.SG2} in Section \ref{s.T} for a more precise and descriptive version). 

\begin{corollary}\label{c.SG}
Let $d=2$ and $N \geq 3$. 
For any $\delta>0$ and any $p\in (0,1)$, there exists $\bar \beta=\bar \beta(\delta,p)$ which is such that all $\beta$ Gibbs states for the spin-$O(N)$ model when $\beta \geq \bar \beta(\delta,p)$ a.s. satisfy the following properties
\bi
\item[i)] The density of edges $e$ with $\|\nabla \sigma(e)\| \leq \delta$ is a.s. higher than $p$. (N.B. The Gibbs state is not assumed to be translation invariant, so density here is understood as asymptotic density over growing boxes $\Lambda_r \nearrow \Z^2$).
\item[ii)] The set of low-gradient edges $\{ e, \|\nabla \sigma(e)\| \leq \delta \}$ is a.s. percolating on $\Z^2$.
\ei
\end{corollary}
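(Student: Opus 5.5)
We deduce Corollary~\ref{c.SG} from Theorem~\ref{th.SG} by a tiling argument combined with the DLR property of infinite-volume Gibbs states.

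\textbf{Setup.} Fix $\delta>0$ and $p\in(0,1)$. Choose $\delta'\le\delta$ so small that $\delta'<(1-p)/C_0$, where $C_0$ is a combinatorial constant fixed below. Let $L=L(\delta')$ and $\beta(\delta')$ be as in Theorem~\ref{th.SG}, and set $\bar\beta=\beta(\delta')$.

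\textbf{Conditional bound on each box.} Let $\mu$ be any $\beta$-Gibbs state with $\beta\ge\bar\beta$. Being an infinite-volume limit, $\mu$ satisfies the DLR equations; this passes to the limit because the finite-box specifications are continuous in the boundary spins. So for any translate $x+\Lambda_L$, conditionally on the spins outside $(x+\Lambda_L)\setminus\p(x+\Lambda_L)$, the law inside is $\P^{\bar\sigma}_{x+\Lambda_L,\beta}$ for a random boundary condition $\bar\sigma$. Call the box $x+\Lambda_L$ \emph{good} if every edge of $x+\Lambda_{L/4}$ has gradient at most $\delta'$. Theorem~\ref{th.SG} is uniform in $\bar\sigma$, so $\mu(\text{box bad}\mid\text{outside})<\delta'$.

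\textbf{Density, item i).} Tile $\Z^2$ by translates of $\Lambda_{L/4}$ centred on the lattice $(L/2+1)\Z^2$. Split these centres into $K=O(1)$ sublattices, spaced more than $2L+1$ apart, so that the corresponding big boxes $x+\Lambda_L$ within one sublattice are pairwise disjoint. For a fixed sublattice, list its boxes inside $\Lambda_r$ in some order and condition successively. Each box is bad with conditional probability less than $\delta'$ given everything outside it, which includes all the other boxes of the same sublattice. Hence the number of bad boxes is stochastically dominated by a Binomial with parameter $\delta'$. An Azuma/Hoeffding bound for the bad-box indicators minus their conditional probabilities, which form a martingale difference sequence, gives an exponentially small probability that more than $2\delta'$ of them are bad. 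Borel--Cantelli along $r\to\infty$ then shows that a.s. the asymptotic fraction of bad tiles is at most $2\delta'$ in each sublattice. Every edge inside a good tile has gradient at most $\delta'\le\delta$, and tiles cover all edges up to an $O(1/L)$ fraction of inter-tile edges. So the density of edges with gradient at most $\delta$ is at least $1-C_0\delta'$, which exceeds $p$. The density is only a $\liminf$ over boxes $\Lambda_r$, so translation invariance is never needed.

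\textbf{Percolation, item ii).} Call a coarse site $x\in(L/2+1)\Z^2$ open if its tile is good. Two adjacent open tiles contain all their internal edges, but the tiles are disjoint, so we need overlap. Fix this by using $\Lambda_{L/4}$ tiles centred on $(L/8)\Z^2$, so that adjacent tiles overlap. A connected set of open coarse sites then yields a connected cluster of low-gradient edges. The coarse field has conditional probability of being closed below $\delta'$ given everything at distance greater than $2L$. By the Liggett--Schonmann--Stacey domination for finitely dependent-type fields (or directly via a Peierls count of closed $*$-circuits, using the sequential conditioning above), it dominates Bernoulli site percolation with parameter close to $1$, which percolates. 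Therefore the open coarse sites a.s. percolate, and so do the low-gradient edges.

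\textbf{Main obstacle.} The coarse field is not finitely dependent; it only has conditional bounds given the outside. The Peierls count must therefore be done carefully. For a fixed $*$-circuit of length $m$, select $m/K'$ of its sites that are pairwise far apart, and bound the probability that all are closed by $(\delta')^{m/K'}$ through sequential conditioning. Summing over the at most $C^m$ circuits, this is summable once $\delta'$ is small. This step is where the uniformity over boundary conditions in Theorem~\ref{th.SG} is essential.
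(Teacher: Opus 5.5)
Your argument is correct, and its backbone is the same as the paper's. Uniformity of Theorem~\ref{th.SG} in the boundary condition, combined with the DLR (Markov) property of an arbitrary infinite-volume state, bounds by $\delta'$ the conditional probability that a tile is bad given the spins outside its surrounding $\Lambda_L$-box. The paper feeds this directly into the Liggett--Schonmann--Stacey theorem to obtain Corollary~\ref{c.SG2}, i.e.\ domination of the whole coarse-grained field by a $p$-Bernoulli product. Both the density statement (by the law of large numbers) and percolation then follow at once. Your treatment of ii) is exactly this.

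Where you differ is i). Grouping the tiles into $O(1)$ sublattices whose $\Lambda_L$-boxes are pairwise disjoint makes each bad indicator have conditional probability below $\delta'$ given all the others in its sublattice. So Azuma--Hoeffding and Borel--Cantelli suffice, and no domination theorem is needed. Your Peierls count over well-separated sites of a $*$-circuit is likewise a self-contained substitute for LSS in ii).

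The two routes trade off as follows:
\begin{itemize}
\item The paper's route yields a single stronger structural statement, with block size $L_2(\delta)$ independent of $p$. The $p$-dependence is absorbed into $\beta$, since for fixed $L$ the failure probability tends to $0$ as $\beta\to\infty$ (cf.\ Proposition~\ref{th.SGquant}).
\item Your route uses only the qualitative Theorem~\ref{th.SG}, at the price of shrinking the threshold to $\delta'(\delta,p)$, so your block size depends on $p$.
\end{itemize}

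Your overlapping tiles also make explicit something the partition-based field $\omega$ of Corollary~\ref{c.SG2} leaves implicit. Edges between adjacent good blocks are not controlled there, so some overlap (or enlarged blocks) is needed to turn percolation of good blocks into percolation of low-gradient edges.

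One small slip: the $O(1/L(\delta'))$ fraction of inter-tile edges is not obviously at most $C_0\delta'$, since that would need $\kappa\ge 1$. Just choose $\delta'$ so that $2\delta'+O(1/L(\delta'))<1-p$, which is possible because $L(\delta')\to\infty$ as $\delta'\to 0$.
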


\begin{remark}
Such a result is well-known for the Gibbs states of the spin-$O(N)$ model obtained as (subsequential) limits of the periodic Gibbs states on tori, thanks to the use of the so-called {\em chessboard estimate}, see \cite{frohlich1978phase,friedli2017statistical,peled2019lectures}. The interest of this Corollary is that the uniqueness of Gibbs states is out of reach at the moment without Griffith's second inequality (even under the assumption of translation invariance). 
\end{remark}

Finally, let us state a direct consequence of the proof of Theorem \ref{th.SG} which applies to Gibbs measures in dimensions $d\geq 3$.
\begin{proposition}\label{pr.NoSurface}
%

Let $d\geq 2$ and $N\geq 2$. There exists a universal constant $C = C(d,N)$ such that for any $n\geq 1$ and any $\alpha>0$, if $\beta$ is large enough (depending on $n$ and $\alpha$), then  uniformly over boundary condition $\bar \sigma \in  (\S^{N-1})^{\p \Lambda_n}$,  if $\sigma \sim \P_{\Lambda_n, \beta}^{\bar \sigma}$, then with probability at least $1-\alpha$, 
\begin{align}  
\text{DE}_{\Lambda_{n/2}}(\sigma) \le \begin{cases} C n^{d-2} & \text{if } N \ge 3 \\  C n^{d-2} \log^2(n) & \text{if } N = 2 \,.\end{cases} 
\end{align}
In particular, low temperature Gibbs states do not produce random surfaces with large gradients when $d\geq 3$. 
\end{proposition}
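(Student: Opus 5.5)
Proposition \ref{pr.NoSurface} follows from Theorem \ref{th.DE} by a compactness argument as $\beta\to\infty$: at low temperature, configurations concentrate near minimizers of the energy, and the constant in Theorem \ref{th.DE} holds uniformly over boundary conditions. Fix $n$ and $\alpha$, and work on the compact space $\Omega_n:=(\S^{N-1})^{\Lambda_n}$. Write $H(\sigma)=\frac12\sum_{i\sim j\in\Lambda_n}\|\sigma_i-\sigma_j\|^2$, and let $\mathcal H(\bar\sigma)$ be the set of harmonic map extensions of $\bar\sigma$. The target is the bound with constant $2C$ in place of $C$, which suffices since $C$ is only a universal constant. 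To get it, I will show that for $\beta$ large, uniformly in $\bar\sigma$, the Gibbs measure puts mass at least $1-\alpha$ on a small neighbourhood of $\mathcal H(\bar\sigma)$. Since $\DE_{\Lambda_{n/2}}$ is continuous (indeed Lipschitz on $\Omega_n$, with a constant depending only on $n,d$), this neighbourhood lies inside $\{\DE_{\Lambda_{n/2}}\le 2Cn^{d-2}(\log^2 n)^{\mathbf 1_{N=2}}\}$ once it is small enough.

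\textbf{Step 1 (deterministic uniform gap).} Let $E_0(\bar\sigma)=\min\{H(\sigma): \sigma_{\p\Lambda_n}=\bar\sigma\}$ and $B:=\{\sigma: \DE_{\Lambda_{n/2}}(\sigma)> 2C n^{d-2}(\log^2n)^{\mathbf 1_{N=2}}\}$. I claim there is $\eta=\eta(n)>0$ with $H(\sigma)\ge E_0(\sigma_{\p\Lambda_n})+\eta$ for all $\sigma\in \bar B$. Suppose not, and take $\sigma^k\in\bar B$ with $H(\sigma^k)-E_0(\sigma^k_{\p\Lambda_n})\to0$. Extract a convergent subsequence $\sigma^k\to\sigma$. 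The function $E_0$ is continuous in $\bar\sigma$: it is a minimum over the interior spins of a function jointly continuous in (interior, boundary), on a compact set. So $H(\sigma)=E_0(\sigma_{\p\Lambda_n})$, i.e. $\sigma$ is a harmonic map. Theorem \ref{th.DE} then gives $\DE_{\Lambda_{n/2}}(\sigma)\le Cn^{d-2}(\cdot)$, which contradicts $\sigma\in\bar B$.

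\textbf{Step 2 (probabilistic estimate).} For fixed $\bar\sigma$, the probability $\P[B]$ equals the integral of $e^{-\beta(H-E_0)}$ over $B$, divided by the integral of the same weight over all of $\Omega_n$. The numerator is at most $e^{-\beta\eta}$, since the interior measure $\lambda^{\otimes}$ has total mass $1$. For the denominator, let $\sigma^*$ be a minimizer. Since $H$ is Lipschitz in the interior spins with constant $K=K(n,d)$, the set $\{\sigma: \|\sigma_i-\sigma^*_i\|\le r \ \forall i\}$ has $H-E_0\le K r$. Its $\lambda$-measure is at least $c r^{(N-1)|\Lambda_n|}$, uniformly in $\sigma^*$ by rotation invariance of $\lambda$. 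Taking $r=1/\beta$ gives $\P[B]\le e^{-\beta\eta} e^{K} c^{-1}\beta^{(N-1)|\Lambda_n|}$, which is $<\alpha$ for $\beta\ge\beta_0(n,\alpha)$, uniformly in $\bar\sigma$.

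The main obstacle is the uniformity over $\bar\sigma$. It is handled by carrying out the compactness argument on the joint space of boundary and interior spins, which requires the continuity of $E_0$ in $\bar\sigma$ noted in Step 1. Everything else is a routine Laplace-type estimate on a finite-dimensional compact space.
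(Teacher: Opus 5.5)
Your proof is correct, but it follows a different route from the paper.

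\textbf{The paper's route.} The paper does not use Theorem \ref{th.DE} as a black box. It re-runs the discrete ODE of Subsection \ref{ss.ODE} at positive temperature, exactly as in Lemma \ref{l.odeT}. It explores the layers $\p\Lambda_k$, $n/2\le k\le n$, one at a time, applying the Markov property at each $\p\Lambda_k$. At each layer it uses Lemma \ref{l.T} (with $\gamma^2=1$) to get $\DE_{\Lambda_k}(\sigma)\le 5\,GS_{\Lambda_k}(\sigma_{\p\Lambda_k})$ with high conditional probability. Combined with Lemmas \ref{l.ext} and \ref{lemma:projection_on_sphere_trhough_a_good_point}, this reproduces the ODE inequality with a worse constant. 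A stopping time handles the scale where the ground-state energy first drops to $\le 1$, since below that the additive error $\gamma^2$ is no longer dominated. A union bound over the roughly $n/2$ layers concludes.

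\textbf{Your route.} You apply the deterministic theorem only once. You transfer it to $\beta<\infty$ through a uniform energy gap $\eta(n)$ on $\{\DE_{\Lambda_{n/2}}\ge 2Cn^{d-2}(\log^2 n)^{\mathbf{1}_{N=2}}\}$, followed by a standard Laplace lower bound on the partition function. The gap comes from compactness of the joint (boundary, interior) configuration space and continuity of the ground-state energy in $\bar\sigma$. That is where your uniformity in $\bar\sigma$ comes from. The only input you need from Theorem \ref{th.DE} is that it holds for every minimizer, which it does.

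\textbf{Comparison.} Your argument is shorter and avoids the conditioning and stopping-time bookkeeping altogether. Losing the factor $2$ in the constant is harmless, since the proposition only asks for some $C(d,N)$.

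What you give up is quantitativeness. Because $\eta(n)$ comes from a contradiction argument, you obtain no explicit $\beta_0(n,\alpha)$. The paper's layer-by-layer scheme uses only the explicit bound of Lemma \ref{l.T} at each step, so the dependence of $\beta$ on the scale can be tracked as in Subsection \ref{ss.quant}. This is why the paper sets up that machinery: it is reused for Theorem \ref{th.SG}. There the statement demands $\beta(\delta)\asymp\delta^{-\tau}$, which a compactness argument (even fed with Theorem \ref{th.H}) would only deliver qualitatively.
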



\ni
\textbf{Notations.}

\ni 
For any $d\geq 2$ and $n\geq 1$, $\Lambda_n:= \{-n, \ldots, n\}^d \subset \Z^d$. \\
For any finite set $\Lambda \subset \Z^d$, recall we define its boundary as 
$\p \Lambda := \{ i \in \Lambda \text{ s.t. } \exists j\in \Lambda^c \text{ with } i\sim j \}$.  \\
For any $A \subset \Lambda \subset \Z^d$ and $\psi: \Lambda \to \R^N$, \\
$\DE_A(\psi) := \frac 1 2 \sum_{e \in E(A)} \| \nabla_e \psi \|^2 = \sum_{i \sim j \in A} (\psi(i) - \psi(j))^2 $, where $E(A)= \{ \{i,j\} \mid i \in A, j \in A, i \sim j  \}$. We stress that we use the notation $i \sim j \in A$ for the sum over unoriented edges.

\medskip
\ni
{\bf Acknowledgments.} 
We wish to thank Fanghua Lin  for very inspiring discussions on harmonic maps in the continuum  setting. This adaptation to harmonic maps on a lattice $\Z^d$ would not have been possible for us without his clear and motivating explanations. We also wish to thank  Petru Mironescu, Christophe Sabot, Sylvia Serfaty and Daniel Stern for very useful discussions.

The authors acknowledge support from the ERC grant VORTEX 101043450. 

\medskip
\ni 
{\bf Statement of  AI use.}  We did not use any AI for the proofs of the main results: Theorem \ref{th.H}, Theorem \ref{th.DE} and Theorem \ref{th.SG}. The proof of Proposition \ref{pr.ExS1} was however entirely  found thanks to ChatGPT 6-Astra. We only rewrote it in our own words. As such, the use of AI is more substantial in this work than in our companion paper \cite{PerturbationS2}. 

Finally, AI was not used in the writing process of this paper, except the paragraph in the introduction explaining the different notions of harmonic maps which exist  in analysis.
Finally, this version of the  paper was not proofread using AI. 

%
%
%
%

\section{Hölder regularity for harmonic maps $\sigma : \Z^2 \to \S^2$}\label{sec:holder_reg}

The goal of this Section is to prove our main regularity estimates, i.e. Theorem \ref{th.H} and Theorem \ref{th.DE}.

%


\smallskip

The Section is organized as follows. In Subsection \ref{ss.DE}, we state two lemmas which control the Dirichlet energy of harmonic functions and harmonic maps. These Lemmas are stated in any dimension $d\geq 2$ and for any sphere-valued $\S^{N-1}, N \geq 2$. 
Then, in Subsection \ref{ss.ODE}, we rely on a discrete ODE to monitor how the Dirichlet energy of harmonic maps necessarily improves across scales. This will prove Theorem \ref{th.DE}. 
Finally in Subsection \ref{ss.HR}, we focus on the case $d=2$ and $N\geq 3$ and based on Theorem \ref{th.DE},  we conclude the proof of Hölder regularity of harmonic maps (Theorem \ref{th.H}). 

\subsection{Dirichlet energy estimates}\label{ss.DE}



\subsubsection{Extension of sphere valued functions}

In this section, we state two useful Lemmas which are partly inspired from their continuum analog. The proof of the first one, Lemma \ref{l.ext}, is quite different though since -- as opposed to the unit ball in $\R^d$ -- the heat-kernel of the simple random walk in cubic boxes of $\Z^d$ is somewhat less explicit.

\subsubsection{Bounds on the Dirichlet Energy of the harmonic extension in $\B^{N}$}

Our first Lemma controls the Dirichlet energy of the harmonic extension of a vector-valued boundary conditions inside a cubic box of $\Z^d$.  

In this Section, $\Lambda$ denotes any cubic box of $\Z^d$ (of the type $\Lambda_k = \{-k, \ldots, k\}^d$). 

If $\psi: \Lambda \to \R^{N}$ and $A \subset \Lambda $, recall from \eqref{e.DA} that we denote its Dirichlet energy measured inside the set $A$ by $\text{DE}_A = \frac 1 2 \sum_{e \in E(A)} |\psi_i - \psi_j|^2$. 

\begin{lemma}[Harmonic extensions of boundary conditions]\label{l.ext}
Let $\psi: \Lambda_n \to \R^{N}$ be  harmonic inside $\Lambda_n$.
Then 
$\text{DE}_{\Lambda_n}(\psi) \le C n^{(d-1)/2} \sqrt{\text{DE}_{\partial \Lambda_n }(\psi)} \|\psi\|_\infty$ for some 
$C$ that depends only on $N$ and $d$.\end{lemma}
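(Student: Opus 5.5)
The idea is to use the variational characterization: $\psi$ is harmonic in $\Lambda_n$, so it minimizes $\DE_{\Lambda_n}$ among all $\R^N$-valued maps with the same values on $\p\Lambda_n$. It therefore suffices to build an explicit competitor $\varphi$ with $\varphi_{|\p\Lambda_n}=\psi_{|\p\Lambda_n}$ and energy at most $C n^{(d-1)/2}\sqrt{\DE_{\p\Lambda_n}(\psi)}\,\|\psi\|_\infty$.

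\textbf{Step 1: normalize by a constant.} Let $m$ be the average of $\psi$ over $\p\Lambda_n$. Replacing $\psi$ by $\psi-m$ changes neither $\DE_{\Lambda_n}$ nor $\DE_{\p\Lambda_n}$, and it keeps harmonicity. We will use two bounds on $S:=\sum_{x\in\p\Lambda_n}\|\psi(x)-m\|^2$:
\begin{itemize}
\item the trivial bound $S\le 4\|\psi\|_\infty^2\,|\p\Lambda_n|\le C\|\psi\|_\infty^2 n^{d-1}$;
\item a discrete Poincaré inequality on the surface $\p\Lambda_n$, $S\le C n^2\,\DE_{\p\Lambda_n}(\psi)$.
\end{itemize}
The Poincaré inequality holds because $\p\Lambda_n$ is a connected union of $2d$ faces, each a $(d-1)$-dimensional grid of side $2n+1$. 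Apply Poincaré on each face, then match the face averages through shared edges; this costs only a constant depending on $d$.

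\textbf{Step 2: the layered competitor.} Fix a thickness $1\le k\le n$. For $0\le t\le k$, let $\pi_t:\p\Lambda_{n-t}\to\p\Lambda_n$ be a "radial" projection, for instance the rounded dilation $x\mapsto \lfloor \tfrac{n}{n-t}x\rceil$ suitably adjusted. It should be Lipschitz with constant bounded in $d$, with preimages of size $O(1)$, and with $\pi_t(x)$ and $\pi_{t+1}(y)$ at distance $O(1)$ whenever $x\sim y$. Define
\begin{align*}
\varphi(x)=m+\Bigl(1-\tfrac tk\Bigr)\bigl(\psi(\pi_t(x))-m\bigr)\quad\text{for } x\in\p\Lambda_{n-t},\ t\le k,
\end{align*}
and $\varphi\equiv m$ deeper inside. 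Edges fall into two types.
\begin{itemize}
\item Tangential edges, within a shell or diagonal-like: each shell contributes at most $C\,\DE_{\p\Lambda_n}(\psi)$, by the Lipschitz and bounded-multiplicity properties (a nearest-neighbour edge maps to a path of bounded length).
\item Normal edges, between consecutive shells: each contributes at most $C\|\psi(\pi_t x)-m\|^2/k^2$ plus a tangential-type term.
\end{itemize}
Summing over the $k$ shells gives
\begin{align*}
\DE_{\Lambda_n}(\psi)\le\DE_{\Lambda_n}(\varphi)\le C\Bigl(k\,\DE_{\p\Lambda_n}(\psi)+\frac{S}{k}\Bigr).
\end{align*}

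\textbf{Step 3: optimize over $k$.} Write $D=\DE_{\p\Lambda_n}(\psi)$.
\begin{itemize}
\item If $k^*:=\|\psi\|_\infty n^{(d-1)/2}/\sqrt{D}\le n$, take $k=\lceil k^*\rceil$ and use $S\le C\|\psi\|_\infty^2n^{d-1}$. Both terms are then $\le C n^{(d-1)/2}\sqrt{D}\,\|\psi\|_\infty$.
\item If $k^*>n$, take $k=n$ and use the Poincaré bound $S\le Cn^2D$. This gives $\DE\le CnD$, and $k^*>n$ is exactly $n\sqrt D<\|\psi\|_\infty n^{(d-1)/2}$. Hence $nD\le n^{(d-1)/2}\sqrt D\,\|\psi\|_\infty$.
\end{itemize}
The case $D=0$ is trivial: the boundary values are constant, so $\psi$ is constant.

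\textbf{Main obstacle.} The delicate point is Step 2 near the corners and edges of the cube. There the shells $\p\Lambda_{n-t}$ and $\p\Lambda_n$ do not match face by face, so the projections $\pi_t$ must be chosen so that nearest-neighbour edges map to boundary paths of uniformly bounded length, with uniformly bounded multiplicity. My first try is the rounded dilation $x\mapsto\lfloor\frac{n}{n-t}x\rceil$ restricted to the shells, since $t\le k\le n$. If this degenerates when $t$ is close to $n$, I would restrict to $k\le n/2$, which changes Step 3 only by constants.
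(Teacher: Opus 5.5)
Your proof is correct, and it takes a genuinely different route from the paper. The paper never builds a competitor. It represents $\psi$ through the Poisson kernel $K(x,z)=\P_x[S_\tau=z]$ of the box and, by two summations by parts, gets the exact identity $\DE_{\Lambda_n}(\psi)=\DE_{\p\Lambda_n}(\psi)+\tfrac12\sum_{x,z\in\p\Lambda_n}K(x,z)\|\psi(x)-\psi(z)\|^2$. In other words, the energy of the harmonic extension is a discrete $H^{1/2}$-seminorm of the boundary data. The paper then imports the random-walk estimate $K(x,z)\le C|x-z|^{-d}$, uses a path-counting lemma on $\p\Lambda_n$, and cuts the double sum at a distance $A$. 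Pairs at distance $\le A$ cost $CA\,\DE_{\p\Lambda_n}(\psi)$ by Cauchy--Schwarz along paths, and pairs at distance $>A$ cost $Cn^{d-1}\|\psi\|_\infty^2/A$. Your collar thickness $k$ plays exactly the role of $A$, and your two terms $kD$ and $S/k$ mirror these.

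Your Dirichlet-principle argument is a discrete collar interpolation in the spirit of Luckhaus, and it is more elementary. It needs no estimate on the Poisson kernel of a box, which is the one non-trivial potential-theoretic input in the paper; it uses only the geometry of rounded dilations and a Poincaré inequality on the cube surface. It would also transfer easily to other domains with a radial collar. The price is that you must subtract the mean and invoke Poincaré in the regime $k^*\gtrsim n$. The paper's seminorm is intrinsically mean-free there: taking $A$ above the diameter of $\p\Lambda_n$ directly gives $\DE_{\Lambda_n}(\psi)\le Cn\,\DE_{\p\Lambda_n}(\psi)$. You also obtain only an upper bound rather than an identity.

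\textbf{Corners.} The corner issue you flag is not an obstacle provided $k\le n/2$, as you suggested. For $t\le n/2$, the rounded dilation by $n/(n-t)\in[1,2]$ has the properties you need:
\begin{itemize}
\item It maps $\p\Lambda_{n-t}$ into $\p\Lambda_n$: the maximal coordinate goes exactly to $\pm n$, and the others stay in $[-n,n]$.
\item It is injective, since reals at distance $\ge 1$ round to distinct integers.
\item It sends neighbours, within a shell or across consecutive shells, to points at $\ell^\infty$-distance at most $5$.
\end{itemize}
For $n$ larger than a constant, two such points are joined by a path of length $O(d)$ inside $\p\Lambda_n$. First move on the face of the first point to match the other coordinates, then move along one coordinate on a face containing the second point. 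Bounded $n$ is handled by the competitor equal to $m$ off $\p\Lambda_n$.

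\textbf{Small $k^*$.} When $k^*<1$ you take $k=1$, and then need $D\le Cn^{(d-1)/2}\sqrt D\,\|\psi\|_\infty$. This follows from $D\le Cn^{d-1}\|\psi\|_\infty^2$, which is the same final Cauchy--Schwarz step as in the paper. In fact it shows that $k^*$ is always bounded below by a constant.
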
\label{lemma:harmonic_extension}

The proof of this Lemma can also be found in our paper \cite{PerturbationS2}. To make this note more self-contained, we include the main steps below and refer to \cite{PerturbationS2} for the few technical ones remaining.

\smallskip

{\em Main steps of the proof. (See \cite{PerturbationS2}).}

 As every coordinate of a harmonic function $\psi: \Lambda \to \R^{N}$ is harmonic, we only need to deal with real valued $\psi$.
Notice first that by harmonicity, if $x \in \Lambda^{\text{int}}$, \[ \psi(x) = \E_x ( \psi(S_\tau)) = \sum_{z \in \partial \Lambda} K(x,z) \psi(z)  \] where $(S_m)_{m \geq 0}$ is a simple random walk on $\Z^d$ and $\tau $ is the hitting time of the set $\partial \Lambda $ and $K(x,z)$ is the Poisson kernel of the Simple Random Walk (SRW) in $\Lambda$ defined by $K(x,z):=\P_x[S_\tau = z]$. 

Thus, by a discrete integration by part,  
\begin{align*}
\text{DE}(\psi) & = \sum_{x \in \partial \Lambda } - \psi(x) (\Delta \psi)(x) \\ 
&= \sum_{x \in \partial \Lambda } \sum_{y, y \sim x } \psi(x) (\psi(x )  - \psi(y))\\ 
&= \sum_{
x \in \partial \Lambda 
}
\left[\sum_{y \in \partial \Lambda , y \sim x } \psi(x) (\psi(x )  - \psi(y))\right] + \sum_{\substack{x \in \partial \Lambda \\ x \text{ is not a corner} }} \psi(x) (\psi(x )  - \psi(y_x)) 
\end{align*}
 where $y_x$ is the neighbor of $x$ in $\Lambda^{\text{int}}$. Notice that corners of $\partial \Lambda$ do not have any neighbors in $\Lambda^{int}$.

Note that, \margin{C: if this factor $\frac 1 2 $ is correct, the definition of $DE$ needs to be changed then ?, \green{N yes, done} \textbf{C: I don't think there is a factor 1/2 here ?  On the LHS, a point $x$ in $d=2$ will give $ 2* \psi(x)^2$} because it has $2$ points $y\sim x$, while the RHS, we will only have $2* \frac 1 2 \psi(x)^2$ because we have only two UNORDERED edges containing $x$. (We are summing over UNORDERED right?) Do you agree that we should remove the $\tfrac 1 2$ ? Do we need to change again the definition then ?}
\[ \sum_{ x \in \partial \Lambda} \sum_{y \in \partial \Lambda , y \sim x } \psi(x) (\psi(x )  - \psi(y))  = \frac{1}{2} \sum_{x, y \in \partial \Lambda, x \sim y} (\psi(x) - \psi(y))^2 \]  

and 

\[   (\psi(x )  - \psi(y_x)) = \sum_{z \in  \partial \Lambda } K(x,z) (\psi(x) - \psi(z))  \]

Thus, by another integration by part \margin{C: why ?? Another IBP with a kernel $K$ now ?\green{ Yes} \textbf{BUT MAYBE for the same reason $\tfrac 1 2$ is not there either ?}}
\[ \text{DE}(\psi ) = \text{DE}_{\partial \Lambda }(\psi) + \frac{1}{2} \sum_{x, z \in \partial \Lambda } K(x,z) (\psi_x- \psi_z)^2 ~. \]

Now it is standard that the Poisson kernel in such a cubic box satisfies $K(x,z) \le \frac{C}{|x -z|^d}$
(see for example \cite[Lemma D.1]{PerturbationS2}). Therefore,

\[ \sum_{x, z \in \partial \Lambda } K(x,z) (\psi_x- \psi_z)^2  \le C \sum_{x, z \in \partial \Lambda}  \frac{|\psi(x) - \psi(z)|^2}{|x -z|^d} \]

To finish the proof we use the following easy combinatorial  fact (see \cite[Lemma 3.7]{PerturbationS2} for a proof).
\begin{lemma} There exists a constant $C  = C(d)$  such that for any cubic box $\Lambda$, for all $k \ge 1$  and for any $x,y \in \partial \Lambda$ that are at distance $k$ in $\partial \Lambda$, one can choose some path  $\gamma_{x y}$ of minimal length inside $\partial \Lambda$ between $x$ and $y$ \margin{C: which stays inside $\p \Lambda$ ??, \green{yes}}, in such a way that every edge appears in at most $C k^{d-1}$ such paths.  \end{lemma}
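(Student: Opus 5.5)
The plan is to build the paths $\gamma_{xy}$ by an explicit rule of the type ``change one coordinate at a time, in a fixed order'', and then count, for a fixed edge $e$, the pairs $(x,y)$ with $d_{\p\Lambda}(x,y)=k$ whose path uses $e$. We write $\Lambda=\{-n,\dots,n\}^d$. A vertex lies in $\p\Lambda$ exactly when at least one of its coordinates equals $\pm n$, and such a coordinate fixes a face $F_{i,\pm}=\{z_i=\pm n\}$. Let $x\in F_{i_0,s}$ and $y\in F_{j_0,t}$, using some fixed tie-breaking rule when a point lies on several faces. The path will be built from at most $d+2$ straight monotone segments. On each segment a single coordinate moves monotonically, and every other coordinate is frozen either at its $x$-value, at its $y$-value, or at $\pm n$.

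\emph{Case 1: $(j_0,t)\neq(i_0,-s)$, i.e.\ the faces are equal or adjacent.} First move coordinate $j_0$ from $x_{j_0}$ to $tn$. If $j_0\neq i_0$ we stay on $F_{i_0,s}$ during this move; if $(j_0,t)=(i_0,s)$ nothing happens. After this step we are on $F_{j_0,t}$. Then move the coordinates $j\neq j_0$ one by one, in increasing order of $j$, from $x_j$ to $y_j$, staying on $F_{j_0,t}$. The resulting path has length $\|x-y\|_1$. This is optimal, because the graph distance in $\p\Lambda$ is at least the graph distance in $\Z^d$, which is $\|x-y\|_1$. So the path is a geodesic of $\p\Lambda$.

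\emph{Case 2: opposite faces, $(j_0,t)=(i_0,-s)$.} Any path in $\p\Lambda$ from $x$ to $y$ must pass through some side face $F_{l,\pm}$ with $l\neq i_0$. For a fixed side face $F_{l,u}$, the shortest route through it has length $2n+\sum_{j\ne i_0,l}|x_j-y_j|+(un-x_l)u+(un-y_l)u$. This is because coordinate $l$ must travel to $un$ and back, and coordinate $i_0$ must travel distance $2n$. I choose $(l,u)$ minimizing this quantity, and this choice realizes $d_{\p\Lambda}(x,y)$. The path is then: move coordinate $l$ to $un$; move coordinate $i_0$ from $sn$ to $-sn$; move coordinate $l$ to $y_l$; finally move the remaining coordinates $j\neq i_0,l$ in increasing order. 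Every intermediate point has some coordinate equal to $\pm n$, so the path stays in $\p\Lambda$.

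\emph{Counting.} Fix an edge $e$ and one of the finitely many combinatorial patterns. A pattern specifies the case, the indices $i_0,j_0,l$, the signs, and which segment of the path contains $e$; there are at most $C(d)$ patterns. Suppose $e$ lies on the segment where coordinate $m$ moves. Then the coordinates frozen at their $x$-value or their $y$-value are read off from $e$, and so are the coordinates frozen at $\pm n$. The only free data are:
\begin{itemize}
\item the $x$-coordinates that were already moved before $e$,
\item the $y$-coordinates that are still to be moved after $e$,
\item the two values $x_m$ and $y_m$.
\end{itemize}
Each of these lies within distance $k$ of the corresponding coordinate of $e$, since the whole path has length $k$. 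In total this gives at most $(d-1)+1$ free coordinates, each with $O(k)$ choices. In Case 1 the coordinate $j_0$ of $y$ is fixed to $tn$ and the coordinate $i_0$ of $x$ is fixed to $sn$; in Case 2 the coordinates $i_0$ of both $x$ and $y$ are fixed. These fixed coordinates are exactly what keeps the count of free coordinates at $d$ rather than $d+1$.

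The constraint $d_{\p\Lambda}(x,y)=k$ removes one more degree of freedom. Concretely, once all free coordinates except $y_m$ are chosen, the length formula is strictly monotone in $|x_m-y_m|$, so at most two values of $y_m$ remain. Hence the number of pairs using $e$ within a given pattern is at most $Ck^{d-1}$, and summing over the $C(d)$ patterns gives the lemma.

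The main obstacle is Case 2: one must check that the detour through the minimizing side face is a genuine geodesic of $\p\Lambda$ and not just a near-geodesic, and that the segment structure still leaves only $d-1$ free parameters after the length constraint is imposed. I would prove minimality through the following lower bound. Any path in $\p\Lambda$ from $F_{i_0,s}$ to $F_{i_0,-s}$ must contain a vertex $w$ with $|w_{i_0}|<n$, and such a $w$ lies on some side face $F_{l,u}$. The path therefore has length at least $\|x-w\|_1+\|w-y\|_1$. Minimizing $\|x-w\|_1+\|w-y\|_1$ over all $w$ with $w_l=un$ gives exactly the formula above, and then one minimizes over $(l,u)$.
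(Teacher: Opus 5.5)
There is a genuine gap, in the adjacent-face part of Case 1, for $d\ge 3$. Your Case 2 and the same-face part of Case 1 do check out. In the adjacent case, after coordinate $j_0$ reaches $tn$ you move the remaining coordinates in increasing order. Every coordinate $m\neq j_0$ with $m<i_0$ is therefore moved while $z_{i_0}=sn$ and $z_{j_0}=tn$, i.e.\ along the ridge $F_{i_0,s}\cap F_{j_0,t}$. For an edge $e$ on such a segment, your parameter count breaks down. Coordinate $j_0$ has already moved, so $y_{j_0}=tn$ is read off from $e$ while $x_{j_0}$ is free. Coordinate $i_0$ has not moved, so $x_{i_0}=sn$ is read off from $e$ while $y_{i_0}$ is free. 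Neither fixed coordinate removes a parameter, so you have $d+1$ free coordinates and, after the length constraint, $O(k^{d})$ pairs rather than $O(k^{d-1})$.

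This is a real failure of the construction, not just of the count. Take $d=3$, $n\ge 4k$, the ridge edge $e=\{(a,n,n),(a+1,n,n)\}$ with $|a|\le n/2$, and pairs $x=(x_1,n,x_3)$, $y=(y_1,y_2,n)$ with $x_1\le a<y_1$, $p=y_1-x_1$, $q=n-x_3\ge 1$, $r=n-y_2\ge 1$, $p+q+r=k$. Then $x$ lies only on $F_{2,+}$, $y$ only on $F_{3,+}$, and $d_{\partial\Lambda}(x,y)=\|x-y\|_1=k$. Your path $(x_1,n,x_3)\to(x_1,n,n)\to(y_1,n,n)\to(y_1,y_2,n)$ uses $e$. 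There are $\sum_{p=1}^{k-2}p(k-1-p)\asymp k^3$ such pairs, whereas the lemma allows at most $Ck^2$.

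The fix is to cross the ridge transversally. In Case 1 with $i_0\neq j_0$, move coordinate $i_0$ immediately after coordinate $j_0$, and only then move the other coordinates, in increasing order, on $F_{j_0,t}$. The path is still monotone in every coordinate, hence still a geodesic. Now any edge $e$ either lies on the $j_0$-segment, so $y_{j_0}=tn$ is not read off from $e$ and removes a parameter, or lies on or after the $i_0$-segment, so $x_{i_0}=sn$ removes one. This gives $d$ free coordinates, and $Ck^{d-1}$ pairs after the length constraint.

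A smaller point: your sentence that ``at most two values of $y_m$ remain'' does not apply when $y_m$ is itself fixed ($m=j_0$ in Case 1, $m=i_0$ in Case 2). There you must use the length constraint to eliminate a different free coordinate. This works because each free coordinate enters the length through a single term $|x_j-y_j|$, $|un-x_l|$ or $|un-y_l|$, which is at most two-to-one.
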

\label{lemma:wellchosenpath}

We then write  $\psi(x) - \psi(z) = \sum_{e \in \gamma_{x z }} \nabla_e \psi $ and obtain

\begin{align*}
\sum_{x, z \in \partial \Lambda } K(x,z) (\psi_x- \psi_z)^2  &\le C \sum_{x, z \in \partial \Lambda}  \frac{|\sum_{e \in \gamma_{x z} } \nabla_e \psi|^2}{|x -z|^d}  \\ 
& = C \sum_{k  = 1 }^{ 2 n } \sum_{x \in \partial \Lambda} \left(\sum_{\substack{ z \in \partial \Lambda \\  |x - z | = k }} \frac{1}{k^d} \left| \sum_{e \in \gamma_{x z} } \nabla_e \psi \right|^2\right) 
\end{align*}

We then split the sum on $k$ at some well-chosen integer $A$ that we will specify.

By Cauchy--Schwarz inequality, 
\begin{align*}
\sum_{1 \le k  \le A  } \frac{1}{k^d}  \sum_{\substack{x, z \in \partial \Lambda \\  |x - z | = k }} \left|\sum_{e \in \gamma_{x z} } \nabla_e \psi \right|^2  & \le    \sum_{k  = 1 }^{ A } \frac{1}{k^{d}}  \sum_{\substack{ x,z \in \partial \Lambda \\  |x - z | = k }}  k \sum_{e \in \gamma_{x z} } (\nabla_e \psi )^2 \\ 
&\le C  A  \text{ DE}_{\partial \Lambda}(\psi) \,,
\end{align*}
where we have used Lemma \ref{lemma:wellchosenpath} to see that 

\[ \frac{1}{k^{d-1}}  \sum_{\substack{ x,z  \in \partial \Lambda \\  |x - z | = k }}  \sum_{e \in \gamma_{x z} } (\nabla_e \psi )^2 \le C \text{ DE}_{\partial \Lambda} (\psi)~. \]

In particular, if $A = n $, we see that 
\begin{equation}\label{eq:whenAisn} \sum_{1 \le k  \le A  } \frac{1}{k^d}  \sum_{\substack{x, z \in \partial \Lambda \\  |x - z | = k }} \left|\sum_{e \in \gamma_{x z} } \nabla_e \psi \right|^2  \le C  n  \text{ DE}_{\partial \Lambda}(\psi)  \end{equation}

For $k \ge A$, we simply bound $|\psi(x) - \psi(z)|  $ by $2\|\psi\|_\infty$ to obtain, 

\[ \sum_{|k| > A } \sum_{\substack{x , z \in \partial \Lambda, \\  d(x,z) = k }} \frac{1}{k^d} (\psi(x) - \psi(z ))^2 \le 2 n^{d-1} \|\psi\|_\infty^2 \sum_{|k| \ge A }  \frac{k^{d -2}}{k^d}  \le   \frac{2 C n^{d-1} \|\psi\|_\infty^2}{A} \]
(N.B. Here we used that $\Lambda=\Lambda_n$ and therefore there are $\Omega(1) n^{d-1}$ points $x$ on $\p \Lambda_n$).
We thus see that 
\[ \text{DE}(\psi) \le \text{DE}_{\partial \Lambda} (\psi) + C \, A \, \text{DE}_{\partial \Lambda}(\psi) + \frac{n^{d-1}}{2} C  \frac{\|\psi\|_\infty^2}{A} \] for some $C >0$. Optimizing on $A$ by setting $A = \frac{\|\psi\|_\infty n^{(d-1)/2}}{\sqrt{\text{DE}_{\partial \Lambda} (\psi)}}$ when $A \le n $, we obtain 

\[ \text{DE}(\psi) \le \text{DE}_{\partial \Lambda}(\psi) + C n^{(d-1)/2} \sqrt{ \text{DE}_{\partial \Lambda}(\psi) } \|\psi\|_\infty \]

The proof is complete as, by Cauchy--Schwarz inequality, one has when $\Lambda=\Lambda_n$
\[
\text{DE}_{\partial \Lambda}(\psi) \le n^{(d-1)/2}  \|\psi\|_\infty  \sqrt{\text{DE}_{\partial \Lambda}(\psi)}\,.
\]

\qed

\begin{remark}  
Notice that if we do not optimise on the cut-off value $1 \leq A \leq n$ in the above proof, it gives the following less optimal estimate which is also classical in harmonic analysis and harmonic maps (See for example (2.8) in \cite{lin2008analysis}): namely, there exists a constant $C>0$ s.t. for all $n\geq 1$, 
\begin{align}\label{e.LessOpt}
\text{DE}_{\Lambda_n}(\psi) \le C n \text{DE}_{\p \Lambda_n}(\psi)\,.
\end{align}

\end{remark}

\subsection{Projection to sphere-valued functions.}

Our second Lemma is strongly inspired by the analysis of Harmonic maps from \cite{hardt1988stable}.
Its proof is given in our work \cite[Lemma 3.8]{PerturbationS2}, we will give the main steps below for completeness.

\begin{lemma}\label{lemma:projection_on_sphere_trhough_a_good_point}
Let $D$ be any finite subset of a graph $G$. Let $\psi: D \to \R^{N}$ with $\psi(x) \in \mbb{S}^{N-1}  $  for any $x \in \partial D$ and $\|\psi(x)\| \le 1$ for any $x$ in $D$. Then, there exists $\sigma: D  \to \mbb{S}^{N-1} $ that coincides with $\psi$ on $\partial \Lambda$ and which satisfies 
 \[ \mrm{D E }(\sigma) \le \begin{cases} C_N \, \mrm{D E }(\psi) & \text{if } N\ge 3 \\ C_2 \mrm{D E }(\psi) \log(|D|) & \text{if } N= 2 \end{cases} \] for some constant $C_N > 0 $ that depends only on $N$ and where $|D|$ denotes the cardinal of $D$.
 \end{lemma}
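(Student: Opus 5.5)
We follow the averaging trick of Hardt--Kinderlehrer--Lin \cite{hardt1988stable}. For a point $a \in \B^N_{1/2}$ (the ball of radius $1/2$ in $\R^N$), let $\pi_a : \B^N \setminus\{a\} \to \S^{N-1}$ be the radial projection from $a$. It sends $y$ to the unique point of $\S^{N-1}$ on the ray from $a$ through $y$, and it fixes $\S^{N-1}$ pointwise. We set $\sigma_a := \pi_a \circ \psi$, which is defined for almost every $a$ because the image of $\psi$ is finite. Then $\sigma_a$ agrees with $\psi$ on $\partial D$. It suffices to show that the average of $\DE(\sigma_a)$ over $a \in \B^N_{1/2}$, with respect to Lebesgue measure, is at most $C_N \DE(\psi)$ (respectively $C_2 \log|D| \, \DE(\psi)$). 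Some $a$ then does at least as well, and we take $\sigma=\sigma_a$.

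\textbf{Step 1: edgewise Lipschitz bound.} For $y\in\B^N$ and $|a|\le 1/2$, the map $\pi_a$ is Lipschitz near $y$ with constant $\lesssim 1/|y-a|$. The reason is that $|y-a|\le 3/2$ while the distance from $a$ to the sphere is at least $1/2$, so $\pi_a$ behaves like $y\mapsto (y-a)/|y-a|$ up to a smooth rescaling. We fix an edge $e=\{x,x'\}$ and write $u=\psi(x)$, $v=\psi(x')$, with $|u-v|=:h$. Integrating along the segment $[u,v]$ is not legitimate when $a$ lies near that segment. So we use the two-sided bound
\[
\|\pi_a(u)-\pi_a(v)\| \le \min\Big(2,\; C\frac{h}{\dist(a,[u,v])}\Big),
\]
which holds because the projection is controlled by the angle subtended by $[u,v]$ as seen from $a$.

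\textbf{Step 2: averaging over $a$.} The averaged squared increment is
\[
\int_{\B_{1/2}} \min\big(4, C h^2 r_a^{-2}\big)\, da,\qquad r_a=\dist(a,[u,v]).
\]
Writing the integral in tubular coordinates around a segment of length $h\le 2$, the set $\{r_a\in[r,r+dr]\}$ has measure $\lesssim (h+r) r^{N-2}dr$. If $N\ge3$, the integral of $h^2 r^{-2}(h+r)r^{N-2}$ over $r\in(h,1)$ is $\lesssim h^2$, since the exponent $N-4\ge -1$ is harmless, giving at worst $h^2\log$ when $N=3$. The region $r<h$ contributes $\lesssim h\cdot h^{N-1}\le h^2$. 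Hence each edge contributes $C_N h^2$ on average, and summing over edges gives the claim.

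Checking $N=3$ more carefully: the region $r\sim$ const near the segment contributes $h\,r^{N-2}\cdot h^2 r^{-2}=h^3/r$ over $r\in(h,1)$, which is $h^3\log(1/h)\le Ch^2$. The remaining part of $\B_{1/2}$, far from the segment, has $r\gtrsim$ dist and gives the point-source term $\int h^2 r^{-2} r^{N-1}dr\lesssim h^2$. So the bound holds for all $N\ge3$.

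\textbf{Step 3: the case $N=2$.} Here the far-field term $\int h^2 r^{-2} r\,dr$ produces $h^2\log(1/h)$, which is not uniformly $O(h^2)$, so this is the main obstacle. We handle it by a truncation. Edges with $h\ge |D|^{-1}$ (or any polynomially small threshold) have $\log(1/h)\le \log|D|$, so they contribute at most $C h^2\log|D|$. Edges with $h<|D|^{-2}$ are handled differently. We restrict $a$ to the set of points at distance at least $|D|^{-2}$ from every segment $[\psi(x),\psi(x')]$. The removed set has measure $\lesssim |D|\cdot|D|^{-2}$, which is small, so we renormalize the average on what remains. On this restricted set the Lipschitz bound gives $h^2 r_a^{-2}$ with $r_a\ge |D|^{-2}$, and the logarithmic integral is again cut off at $\log|D|$. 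Summing over edges yields $C_2\log|D|\,\DE(\psi)$ as an average over a set of positive measure, so some $a$ achieves this bound. The delicate point to verify is that restricting the domain of $a$ does not spoil the edgewise bounds; it only improves them, since we integrate a nonnegative quantity over a smaller set and divide by a measure bounded below.
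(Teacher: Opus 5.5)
Your proposal is correct in substance and uses the same Hardt--Kinderlehrer--Lin averaging as the paper. Your $\pi_a\circ\psi$ is exactly the paper's $f_Y^{-1}(\sigma_Y)$, and for $N=2$ both arguments keep the centre away from small sets at a cost of $\log|D|$.

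\emph{Where you differ: the edgewise estimate.} Your concern about $a$ lying near $[u,v]$ is unnecessary. For nonzero $p,q\in\R^N$ one has $\bigl\|\frac{p}{\|p\|}-\frac{q}{\|q\|}\bigr\|\le \frac{2\|p-q\|}{\max(\|p\|,\|q\|)}$. With $p=u-a$, $q=v-a$ this holds for every $a\notin\{u,v\}$, including $a$ on the segment. It gives the paper's bound $C h\,(\|u-a\|^{-1}+\|v-a\|^{-1})$, which involves distances to the \emph{endpoints}. That bound is stronger than yours, since $\dist(a,[u,v])\le\min(\|u-a\|,\|v-a\|)$.

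With it, Step 2 for $N\ge 3$ reduces to the local integrability of $\|y-u\|^{-2}$ in $\R^N$, with no tubular computation. For $N=2$ one only has to excise $|D|$ small balls around the points $\psi(x)$. Your segment-based bound and shell computation, with shells of measure $(h+r)r^{N-2}\,dr$, are still valid. They give $O(h^2)$ per edge for $N\ge3$, and $O\bigl(h^2(1+\log(1/\rho))\bigr)$ on $\{r_a\ge\rho\}$ for $N=2$. So nothing is lost there except simplicity.

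\emph{The slip: the measure count in Step 3.} You excise $\rho$-tubes around \emph{every} segment, and there are $|E(D)|$ segments, not $|D|$. For a general graph, as in the statement, $|E(D)|$ can be of order $|D|^2$. Tubes of width $|D|^{-2}$ around segments of length $O(1)$ then have total area of order one. So you can no longer guarantee that a subset of $\B^N_{1/2}$ of measure bounded below survives, and the renormalisation step fails. On boxes of $\Z^d$ the degree is at most $2d$, so your count is fine there.

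The fix is easy. One option is to take $\rho=c|D|^{-3}$, which only changes the constant in front of $\log|D|$. Better, excise tubes only around the short segments, those with $h<\rho$. Each such tube lies in the ball of radius $2\rho$ around $\psi(x)$, so the excised set has area $O(|D|\rho^2)$; this is precisely the paper's excision of small balls around the $\psi(x)$. Long edges, with $h\ge\rho$, need no excision, since $\log(1/h)\le\log(1/\rho)=O(\log|D|)$. This also closes the small gap you left between your thresholds $|D|^{-1}$ and $|D|^{-2}$.
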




\ni
{\em Proof of Lemma \ref{lemma:projection_on_sphere_trhough_a_good_point}. (See also\cite[Lemma 3.8]{PerturbationS2}).} 

 The proof relies on the following nice idea of Hardt, Kinderlehrer and Lin.

 A naive way to construct a function $\sigma: D \to \mbb{S}^{N-1} $ that coincides with $\psi$, is to set $\sigma(x) = \frac{\psi(x)}{|\psi(x)|}$. Of course, this is not necessarily well-defined and even when it is, it has no reason to have a Dirichlet energy that is under control. The choice of projection "from the point of view" of zero is somewhat arbitrary and one may wish to define a function $ \sigma_y (x) = \frac{\psi(x) - y}{|\psi(x) - y|} $ for some well-chosen $y$. The function $\sigma_y$ does not coincide with $\psi$ on  $\partial D$ with this definition but this can be circumvented as follows. For $y \in B(0,1)$, the function $f_y (x) = \frac{x - y }{|x -y |}$ is  bijective, bilipschitz on $\mbb{S}^{N-1}$ and the Lipschitz constant of $f_y$ and its inverse are bounded uniformly for $y \in B(0, 1/2)$.
 A suitable option should thus be to set
 $ \sigma(x) =f_y^{-1}(\sigma_y (x)) $
 for a well-chosen $y$.

What should then be the appropriate choice of $y$? The idea of Hardt, Kinderlehrer and Lin is very nice and simple: pick $y$ uniformly at random in $B(0,1/2)$ and hope it works with reasonable probability. Let $Y $ be a uniform random variable in $B(0,1/2)$. This strategy is indeed successful if $N \ge 3$ as one can see by computing the expectation of the  Dirichlet energy of $\sigma_Y$. (N.B. notice that since $f_Y$ is uniformly bilipschitz over $Y\in B(0,1/2)$, the Dirichlet energy of $x\mapsto f_Y^{-1}(\sigma_Y(x))$ is bounded from above by a constant times the Dirichlet energy of $x\mapsto \sigma_Y(x)$).

%
%
\ni
A short (but unpleasant) exact computation (carried in \cite[Lemma 3.8]{PerturbationS2}) gives for any given $Y\in B(0,\tfrac 1 2)$, 
  \begin{align*}
&|\sigma_Y (j) - \sigma_Y (j)| \le  |\psi(i) - \psi(j)|  \left( \frac{1}{|\psi(i) - Y|} + \frac{1}{|\psi(j) - Y|}\right) ~.  
\end{align*}


\ni
 By averaging over a uniformly chosen $Y\in B(0, \tfrac 1 2)$, we thus obtain 
 \[ \E(|\nabla_{i j } \sigma_Y |^2) \le |\nabla_{i j } \psi|^2  \frac{2}{\text{Vol}(B(0,1/2))} \int_{B(0,1/2)} \frac{ d r}{r^2}  \le C |\nabla_{i j } \psi|^2   \] if $N \ge 3$.

 Summing over edges $i j $, we see that 

 \[ \E(\text{DE}(\sigma_Y)) \le C \text{DE}(\psi) \] for some constant $C$ that depends only on $N$.

 We have crucially used the fact that $ \frac{1 }{r^2}$ is integrable near $0$ which only holds when $N \ge 3$. If $N = 2$, we need to slightly modify the distribution of $Y$ in order to solve the problem. Let $\tilde{Y}$ be a random variable which has the distribution of a  uniform random variable on  $B(0,1)$ conditioned on not lying in any of the $|D|$ balls of radius $\frac{1}{10 |D|}$ centered at the $\psi_i$, $i \in D$.

We obtain,
  \[ \E(|\nabla_{i j } \sigma_{\tilde{Y}} |^2) \le |\nabla_{i j } \psi|^2  \frac{2}{C_\psi} \int_{A_{(\frac{1}{10 D }, 1/2) }} \frac{ d r}{r^2} \]
  
where $C_\psi = \text{Vol}\left(B(0,1/2) \setminus \bigcup_{i \in D} B(\psi_i, \frac{1}{10 |D|}) \right)  \ge C $ for some $C>0$ and $A_{(\frac{1}{10 D }, 1/2) } $ is the annulus centered at $0$ with inner radius $\frac{1}{10 |D|}$ and outer radius $1/2$.

As (when $N=2$), \[ \int_{A_{(\frac{1}{10 D }, 1/2) }} \frac{ d r}{r^2} \le C \log(|D|) ~, \] we see that  $ \E(|\nabla_{i j } \sigma_{\tilde{Y}} |^2)  \le C (\nabla_{i j }(\psi ))^2 \log(|D|)$ for some other constant $C$ and 

\[ \E(\text{DE}(\sigma_{\tilde{Y}} (x))) \le C \text{DE}(\psi) \log(|D|) ~. \]
\qed

\subsection{Proof of Theorem \ref{th.DE} via a discrete ODE}\label{ss.ODE}
$ $

The goal of this Section is to prove the Dirichlet Energy estimates in the bulk $\Lambda_{n/2}$ uniformly over boundary conditions as stated in Theorem \ref{th.DE}. 

%
%
%
%

Before proving Theorem \ref{th.DE}, let us first state an immediate Corollary. We state it here because its continuum analog plays a crucial role in the regularity theory of harmonic maps from $\R^2$ to $\S^2$. See for example \cite[Chapter 2]{lin2008analysis}. We shall not need such a  ``small enough $\eps$-regularity  statement in this paper'' and the proofs below will bypass this classical result.


\begin{corollary}[$\epsilon$-Regularity Theorem for lattice based harmonic maps]\label{pr.Eps}
When $d=2$ and $N\geq 3$, for any $\eps>0$, there exists $\lambda=\lambda(\eps) \in (0,1)$ so that for any $n\geq 10$, then uniformly in all possible boundary conditions $\bar \sigma  \in (\S^2)^{\p \Lambda_n}$, any harmonic map $\sigma$ with boundary data $\bar \sigma$ needs to accumulate a Dirichlet energy inside the bulk box $\Lambda_{\floor{\lambda n}} \subset \Lambda_n$ which is less than $\eps$. I.e. 
\begin{align*}  
\sum_{i\sim j \in \Lambda_{\floor{\lambda n}}} \|\sigma^{opt}_i - \sigma^{opt}_j\|^2 \leq \eps
\end{align*}
\end{corollary}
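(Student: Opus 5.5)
This corollary follows directly from the small-scale estimate \eqref{e.DElog} of Theorem \ref{th.DE}, which holds for $d=2$ and $N\geq 3$. That estimate says that, uniformly over boundary conditions $\bar\sigma\in(\S^{N-1})^{\p\Lambda_n}$ and over all harmonic map extensions $\sigma$, one has $\DE_{\Lambda_L}(\sigma)\le C/\log(n/L)$ for every $L\le n/2$. Given $\eps>0$, I will pick $\lambda=\lambda(\eps)\in(0,\tfrac12]$ small enough that $C/\log(1/\lambda)\le \eps$, for instance $\lambda:=\min\bigl(\tfrac12, e^{-C/\eps}\bigr)$.

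Then I set $L=\floor{\lambda n}$. This gives $L\le \lambda n\le n/2$ and $n/L\ge 1/\lambda$, so $\log(n/L)\ge \log(1/\lambda)\ge C/\eps$. Hence $\DE_{\Lambda_L}(\sigma)\le \eps$.

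Two minor points need checking.

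\emph{Degenerate case $L=0$.} When $\floor{\lambda n}=0$, the box $\Lambda_0$ is a single site and contains no edges, so the bound holds trivially. Equivalently, \eqref{e.DElog} only needs to be applied for $L\geq 1$, which is the range where it is stated.

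\emph{Normalization.} The sum $\sum_{i\sim j\in\Lambda_L}\|\sigma_i-\sigma_j\|^2$ in the corollary coincides with $\DE_{\Lambda_L}$ up to the harmless factor-$\tfrac12$ convention in \eqref{e.DA}. If needed, this is absorbed by replacing $C$ with $2C$. The hypothesis $n\ge 10$ plays no real role beyond ensuring $n/2\ge 1$.

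There is no genuine obstacle here: all the substance lies in \eqref{e.DElog}, whose proof via the discrete ODE of Subsection \ref{ss.ODE} combines Lemma \ref{l.ext} with Lemma \ref{lemma:projection_on_sphere_trhough_a_good_point}. The only thing to watch is that the constant $C$ in \eqref{e.DElog} is uniform in $n$, in $L$, and in the boundary data. This uniformity is exactly what makes $\lambda$ depend on $\eps$ alone.
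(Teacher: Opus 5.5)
Your proposal is correct and follows the paper's own route: the paper states this corollary as an immediate consequence of the small-scale bound \eqref{e.DElog} in Theorem \ref{th.DE}, and your choice of $\lambda(\eps)=\min\bigl(\tfrac12, e^{-C/\eps}\bigr)$ with $L=\floor{\lambda n}$ is exactly that deduction. Your handling of the degenerate case $L=0$ and of the factor-$\tfrac12$ normalization is also fine.
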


\ni
{\em Proof of  Theorem \ref{th.DE}.} 

The starting point of this step is to note that Lemma \ref{l.ext} together with Lemma \ref{lemma:projection_on_sphere_trhough_a_good_point} imply that if  $\sigma: \Lambda \to \mbb{S}^{N-1}$ is  a harmonic map on $\Lambda$ (it is important here that $\Lambda$ is a cubic box $\Lambda=\Lambda_k \subset \Z^d$), we have 
\begin{equation} \label{eq:upperbound} \text{DE}(\sigma) \le  \begin{cases}  C  n^{(d-1)/2} \sqrt{\text{DE}_{\partial \Lambda }(\sigma)}  & \text{if } N\ge 3 \\ C \log(n)  n^{(d-1)/2} \sqrt{\text{DE}_{\partial \Lambda }(\sigma)}  & \text{if } N =  2 \end{cases} ~. \end{equation}

We use this information to derive a discrete differential inequality on $H(k) :=  \text{DE}_{\Lambda_k}(\sigma)$.

As, 
\[ H(k) - H(k - 1) = \sum_{e \in \partial \Lambda_k } (\nabla_e \sigma)^2 + \sum_{\substack{e = \{x,y\}, \\  x \in \partial \Lambda_{k-1}, y \in \p \Lambda_{k} }} (\nabla_e \sigma)^2  \ge \text{DE }_{\partial \Lambda_k}(\sigma) \]

we see that

\[ H(k ) - H(k-1) \ge \frac{H(k)^2}{k^{d-1}}  \begin{cases} C & \text{if } N \ge 3 \\ \frac{C}{\log^2(k)} & \text{if } N = 2 \end{cases} ~. \]

By replacing $H(k)^2  $ by $H(k) H(k-1)$ in the right-hand side of this last inequality, we obtain  $\frac{1}{H(k-1)} - \frac{1}{H(k)} \ge \frac{C}{k^{d-1} } $ if $N\ge 3$ and the same expression with an extra $\log^2(k)$ term in the denominator of the right-hand side if $N = 2 $.

Summing from $L$ to $n$ we get, 

\[ \frac{1}{H(L)} \ge \frac{1}{H(L)} - \frac{1}{H(n)}  \ge \sum_{k = L }^n \frac{1}{k^{d-1}} \] if $N \ge 3 $ and an extra $\log^2(k)$ in the denominator of the summand if $N  = 2$.

If $d \ge 3$, it holds that $ \sum_{k = n/2 }^n \frac{1}{k^{d-1}} \ge  C(d) n^{2 - d }   $ and

\[ \sum_{k = n/2 }^n \frac{1}{k^{d-1} \log^2(k)} \ge  C(d) n^{2-d} \log^{-2}(n)   \]

and we get the desired result.

When $d = 2$ and $N \ge 3$, we obtain  

\[ H(L) \le C \frac{1}{\log(n/L)} \] leading to both the large scale and small scale result.

When $N = 2 $ and $d = 2 $, 
\[ H(L) \le \frac{C}{\sum_{k = L }^n \frac{1}{k \log^2(k)}  } \le  \frac{C}{ \frac{1}{\log(L)} - \frac{1}{\log(n)}} \] which reads 

\[ H(n/2) \le C \log^2(n) \] and gives no information at small scales.   \qed

\subsection{Proof of Hölder regularity when $d=2$ and $N\geq 3$.}\label{ss.HR}
$ $

In this Section, we conclude the proof of Theorem \ref{th.H} which applies to $d=2$ and $N\geq 3$. Notice that in this case, Theorem \ref{th.DE} already gives a weaker ``local logarithmic regularity''. 
The idea of the proof  is inspired by its continuum analog (as in \cite[Chapter 2]{lin2008analysis}). Let us start with the following particular case. 

\smallskip

\ni
\textbf{The case of nearest-neighbor points $x\sim y$.}

 Let us fix two points $x,y \in \Lambda_n$ and let $N\geq 1$ such that $x + \Lambda_N \subset \Lambda_n$. Let $\sigma$ be one harmonic map extension of the boundary condition $\bar \sigma$ along $\p \Lambda_n$. Our goal is then to show that $\|\sigma(x) - \sigma(y)\|_2 \leq C n^{-\alpha}$. 
 The proof will be divided in 2 steps. 
\bnum
\item First, thanks to Theorem \ref{th.DE} we find that $\text{DE}_{x+ \Lambda_{\frac n 2}}(\sigma) \leq C(2,N)$ (here, as in Theorem \ref{th.H}, $\sigma$ is one of the possible harmonic maps with boundary conditions $\bar \sigma$).


Note that one could use the second part of Theorem \ref{th.DE} to deduce that for scales  $r_0 \ll \frac N 2$, 
\begin{align*}  
\text{DE}_{x+ \Lambda_{r_0} }(\sigma) \le \frac{C}{\log(n/r_0)} 
\end{align*}
but such a logarithmic decay is not good enough to imply Hölder regularity.  Instead, starting at scale $\frac n 2$, we change gear and use the following idea. 

\item From then on, (scale $\frac n 2$ and below), we show that the Dirichlet Energy improves at each multiplicative scale. This is the purpose of the next Lemma.
\enum


\begin{lemma}\label{l.dyadic}
There exists $\lambda \in(0,1)$ so that for any $R$ s.t. $ 10 \leq R \leq n$, if 
if $\sigma$ is a harmonic map on $\Lambda_n \supset \Lambda_R$ then 
\begin{align*}  
\text{DE}_{\Lambda_{\floor{R/2}} }(\sigma) \leq 
\lambda \, \text{DE}_{\Lambda_{R}}(\sigma)\,.
\end{align*}
\end{lemma}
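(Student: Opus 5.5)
Throughout we work with $d=2$ and $N\geq 3$, which is the setting of this subsection. The plan is the discrete analogue of the classical hole-filling argument of Widman and Morrey. We combine the comparison estimate \eqref{eq:upperbound}, which uses Lemma \ref{l.ext} together with Lemma \ref{lemma:projection_on_sphere_trhough_a_good_point}, with an averaging over intermediate boxes. The key point is that the harmonic-map inequality should be applied with a \emph{mean-zero} version of the boundary data. This gives a bound on the energy inside a box by the energy on its boundary alone, with no $\|\psi\|_\infty$ factor that would spoil scaling.

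\textbf{Step 1 (energy of a box controlled by its boundary).} Fix $k$ with $R/2\le k\le R$, so that $\Lambda_k\subset\Lambda_R\subset\Lambda_n$. Since $\sigma$ minimizes energy in $\Lambda_k$ given $\sigma_{|\p\Lambda_k}$, it suffices to build a competitor. Let $\bar m$ be the average of $\sigma$ over $\p\Lambda_k$. Since $\p\Lambda_k$ is a cycle of length $\asymp k$, the one-dimensional Poincaré inequality gives
\begin{align*}
\max_{\p\Lambda_k}|\sigma-\bar m|\le \sum_{e\in\p\Lambda_k}|\nabla_e\sigma|\le C\sqrt{k\,\DE_{\p\Lambda_k}(\sigma)}.
\end{align*}
Let $\psi$ be the harmonic extension of $\sigma_{|\p\Lambda_k}$. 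By the maximum principle $\|\psi\|\le1$. Applying Lemma \ref{l.ext} to $\psi-\bar m$, which has the same gradients, yields $\DE_{\Lambda_k}(\psi)\le C k^{1/2}\sqrt{\DE_{\p\Lambda_k}}\cdot\sqrt{k\,\DE_{\p\Lambda_k}}=Ck\,\DE_{\p\Lambda_k}(\sigma)$. This is exactly \eqref{e.LessOpt}. Lemma \ref{lemma:projection_on_sphere_trhough_a_good_point} then gives an $\S^{N-1}$-valued competitor with the same boundary values, so
\begin{align*}
\DE_{\Lambda_k}(\sigma)\le C_0\, k\, \DE_{\p\Lambda_k}(\sigma).
\end{align*}

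\textbf{Step 2 (averaging over shells).} The boundaries $\p\Lambda_k$ for $k\in[R/2,R]$ are disjoint, and their edge sets lie in $E(\Lambda_R)\setminus E(\Lambda_{\floor{R/2}-1})$. Hence
\begin{align*}
\sum_{k=\lceil R/2\rceil}^{R}\DE_{\p\Lambda_k}(\sigma)\le \DE_{\Lambda_R}(\sigma)-\DE_{\Lambda_{\floor{R/2}-1}}(\sigma).
\end{align*}
By pigeonhole, some $k$ in this range satisfies $k\,\DE_{\p\Lambda_k}\le R\cdot\frac{C}{R}\bigl(\DE_{\Lambda_R}-\DE_{\Lambda_{\floor{R/2}-1}}\bigr)$. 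Since $\Lambda_{\floor{R/2}}\subset\Lambda_k$, Step 1 yields
\begin{align*}
\DE_{\Lambda_{\floor{R/2}}}\le C_1\bigl(\DE_{\Lambda_R}-\DE_{\Lambda_{\floor{R/2}-1}}\bigr).
\end{align*}
This is the hole-filling step. We want the inner box on the right to be $\Lambda_{\floor{R/2}}$ itself. To arrange this, take the shells in $[\floor{R/2}+1,R]$ instead, which is fine since $R\ge10$. Then the shell sum is at most $\DE_{\Lambda_R}-\DE_{\Lambda_{\floor{R/2}}}$. Adding $C_1\DE_{\Lambda_{\floor{R/2}}}$ to both sides gives $(1+C_1)\DE_{\Lambda_{\floor{R/2}}}\le C_1\DE_{\Lambda_R}$, that is, the claim with $\lambda=C_1/(1+C_1)<1$. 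This constant is uniform in $R$, $n$ and the boundary data.

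\textbf{Main obstacle.} The delicate point is Step 1. Lemma \ref{l.ext} as stated contains $\|\psi\|_\infty$, which is only $O(1)$. Used naively, the bound would be $\sqrt{k\,\DE_{\p}}$, which is not linear in $\DE_{\p}$ and so does not close the hole-filling argument. The fix is to subtract the constant $\bar m$ before invoking Lemma \ref{l.ext}. This is legitimate because the Dirichlet energy, harmonicity and the Poisson-kernel proof are all invariant under adding constants. Lemma \ref{lemma:projection_on_sphere_trhough_a_good_point} must then be applied to the original $\psi$, which satisfies $\|\psi\|\le1$, and not to the shifted function.

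The remaining care is bookkeeping. First, the boundary edges of $\p\Lambda_k$ must be correctly attributed to the annulus. Second, the energy counted in $\DE_{\Lambda_k}$ must match the energy that the minimization in the Definition controls. These agree up to the radial edges touching $\p\Lambda_k$, which are harmless because they are also contained in $\Lambda_R\setminus\Lambda_{\floor{R/2}}$.
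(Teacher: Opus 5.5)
Your proof is correct and follows the paper's argument. Both pick a good shell $\partial\Lambda_k$ with $R/2<k\le R$ by pigeonhole, bound the energy of the harmonic extension by $Ck\,\DE_{\partial\Lambda_k}$ as in \eqref{e.LessOpt}, project to the sphere via Lemma \ref{lemma:projection_on_sphere_trhough_a_good_point}, use minimality, and fill the hole to get $\lambda=C_1/(1+C_1)$. The only differences are these: the paper takes \eqref{e.LessOpt} from the proof of Lemma \ref{l.ext} with the cut-off left unoptimized, whereas you derive it from the statement of Lemma \ref{l.ext} by subtracting the boundary mean and applying Poincar\'e on the cycle $\partial\Lambda_k$; and your choice of shells $k\ge \floor{R/2}+1$ also handles the floor bookkeeping that the paper glosses over.
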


Before proving this Lemma, let us shortly explain why steps (1) and (2) imply the local Hölder regularity at $x\sim y$. 
Let $k\geq 0$ be such that $\frac N 4 < 2^{k+1} \leq \frac N 2$. By applying the above Lemma $k$ times, we obtain that the Dirichlet energy of $x+ \Lambda_1$ (which contains the edge $x\sim y$) is less than $\lambda^k  \DE_{x+\Lambda_{2^{k+1}}}(\sigma) \leq \lambda^k \DE_{\Lambda_{N/2}} \leq \lambda^k C(2,N) \leq C(2,N) N^{-2\alpha}$ where $\alpha=\alpha(\lambda)>0$. Since $\|\sigma(x) - \sigma(y)\|_2^2$ is less than $\DE_{x+\Lambda_1}(\sigma)$, this gives the result.


\smallskip
\smallskip
\smallskip

%

\ni
{\em Proof of Lemma \ref{l.dyadic}.}

Let us assume that $\sigma$ is a harmonic map on $\Lambda_R$.
Obviously, we must have  \margin{C: formally I should insert many $\floor{\frac R 2}$ and $\floor{\frac R 2} + 1$... }
\begin{align*}  
\sum_{i=\frac R 2}^R  \text{DE}_{\p \Lambda_{i}}(\sigma) \leq \text{DE}_{\Lambda_{R} \setminus \Lambda_{R/2}}(\sigma)\,.
\end{align*}
There must also exist $j\in \{ \floor{\frac R 2}, \ldots, R\}$ such that 
\begin{align*}  
\text{DE}_{\p \Lambda_{j}}(\sigma) \leq  \frac{3 } R  \text{DE}_{\Lambda_{R} \setminus \Lambda_{R/2}}(\sigma) \,.
\end{align*}
Let $\psi=\psi_j$ be the harmonic extension in $\Lambda_j^{int}$ of $\sigma_{| \p \Lambda_j}$. 
If we apply estimate \eqref{e.LessOpt} to this harmonic extension $\psi$, we obtain
\begin{align*}  
\text{DE}_{\Lambda_j}(\psi) \le C_1 j \text{DE}_{\p \Lambda_j}(\psi)  \leq  C_1 \,j \,   \frac{3} R\, \text{DE}_{\Lambda_{R} \setminus \Lambda_{R/2}}(\sigma) \,,
\end{align*}
for some universal constant $C_1=C_1(N)$. 

Now using Lemma \ref{lemma:projection_on_sphere_trhough_a_good_point}, we now obtain 
\begin{align*}  
\text{DE}_{\Lambda_j}(\sigma) \leq C_2(N) \text{DE}_{\Lambda_j}(\psi)  \leq C_1(N) C_2(N)  \,  j \, \frac{3 } R\, \text{DE}_{\Lambda_{R}\setminus \Lambda_{R/2}}(\sigma) \,.
\end{align*}
Now since $\text{DE}_{\Lambda_{R/2}}(\sigma) \leq \text{DE}_{\Lambda_j}(\sigma)$ and since $\frac j R \leq 1$, we obtain that 
\begin{align*}  
\text{DE}_{\Lambda_{R/2}}(\sigma) & \leq  3  C_1(N) C_2(N) \text{DE}_{\Lambda \setminus \Lambda_{R/2}}(\sigma)  \\
& \le 3  C_1(N) C_2(N)  \Big[ \text{DE}_{\Lambda_R}(\sigma)  - \text{DE}_{\Lambda_{R/2}}(\sigma)  \Big]
\end{align*}
which thus proves the result with   \margin{C: je n'ai pas fait gaffe aux coupures entre $\Lambda_{R/2}$ et $\Lambda_R \setminus \Lambda_{R/2}$ ni aux $\floor{R/2}$...  \textbf{pas sur ceci dit qu'on veuille alourdir le texte ?} \dgreen{N: moi ça ne me choque pas mais si tu penses que c est nécessaire on peut tout préciser}} 
\begin{align*}  
\lambda:=\frac{3  C_1(N) C_2(N) } {1 + 3  C_1(N) C_2(N) }\,. 
\end{align*}
\qed

\ni
\textbf{The case of arbitrary points $x, y \in \Lambda_n$.}

Let us assume we can find $r\geq 2$ such that $y\notin x+ \Lambda_{r/4}$ and $y\in x+ \Lambda_{r/2}  \subset  x+ \Lambda_{2r} \subset \Lambda_{n}$. If this is not the case, the ratio $\frac{d(x,y) }{d(x, \partial \Lambda_n)\wedge d(y, \partial \Lambda_n)} $ is of order $1$. Notice that $r$ is of order $\dist(x,y)$.

\smallskip


\ni
\textbf{Step 1.} Let $j_0$ be such that $x + \Lambda_{2^{j_0} r } \subset \Lambda_n$ and $x + \Lambda_{2^{j_0+1} r }$  is not included in  $\Lambda_n$. (Note that by our assumption $x+\Lambda_{2r} \subset \Lambda_n$, we have $j_0\geq 1$) Notice that $ \frac{1}{2^{j_0}}$ is of the same order as $\frac{d(x,y) }{d(x, \partial \Lambda_n)\wedge d(y, \partial \Lambda_n)} $.
Let us consider all dyadic boxes ranging between $x+ \Lambda_r$ and $\Lambda_n$, i.e. $x+ \Lambda_{2^j r}$, $j=1, \ldots, j_0 $ and let $L = 2^{j_0} r$.  Thanks to Theorem \ref{th.DE}, going from $x+\Lambda_{2^{j_0} r}$ down to $x+ \Lambda_{2^{j_0-1} r}$,  we have that $\DE_{x+ \Lambda_{2^{j_0-1} r}}(\sigma) \leq C(2,N)$. Then, using Lemma \ref{l.dyadic} as we did above, we get 
\begin{align}\label{e.lr}
\DE_{x+\Lambda_r} \leq O(1) \Big( \frac{r}{L} \Big)^{2\alpha}~.
\end{align}
\margingreen{ I rewrote a lot of this step}

\smallskip
\ni
\textbf{Step 2.}

Let us introduce some notations of discrete differential calculus over a graph. If $G = (V,E)$ is a (unoriented) graph, we define $\overrightarrow E$ as being the set of oriented edges (each edge appear in both orientation). If $e \in \overrightarrow{E} $ we denote $e_+$ and $e_-$ the beginning and the end of the edge. If $e = (e_+, e_-)$, we denote by $-e$ the edge $(e_-, e_+)$. We say that  $\gamma:\overrightarrow{E}\to \R$ is a $1$-form if $\gamma(e) = - \gamma(-e)$ for all $e \in \overrightarrow{E}$. For $f: V \to \R$ and $e \in \overrightarrow{E}$, we denote $d f (e) = f(e_+) - f(e_-)$ for the differential of the function $f$. The function $d f$ is a $1$-form. If $\gamma$ is a $1$-form we define the divergence of $\gamma$ by $\div \gamma(x) = \sum_{e \in \overrightarrow{E}, x = e_+} \gamma(e)$. For vertices, $x , y \in V$, we say that a $1$-form $\gamma$ is a flow on between $x$ and $y$ if $ \div(\gamma)(x) +  \div(\gamma)(y) = 0$ and $\div \gamma(u) =  0$ for any $u \notin \{x, y\}$. \margingreen{J'ai rajouté ce paragraphe, qu'en penses tu?}

We claim that there exists a flow $u$ on the directed edges of $x+\Lambda_r$, which is such that 
\bnum
\item $\mathrm{div}(u)  =  \delta_x - \delta_y$ so that $\<{d \sigma, u} = \sigma(x) - \sigma(y)$. 
\item For any edge $u=(i,j), i\sim j$ inside $x+ \Lambda_r$, we have 
\begin{align*}  
|u(e)| \leq C \Big( \frac 1{\dist(i,x)} + \frac 1 {\dist(i,y)} \Big)\,.
\end{align*}
\ni
\enum
This is proved for example in \cite[Lemma 3]{sabot2013} (stated in dimension $d\geq 3$ but the same proof applies to $d=2$). 

\smallskip
\ni
\textbf{Step 3.} We now partition the box $x+\Lambda_r$ into successive dyadic boxes around both $x$ and $y$ as follows. 
\begin{align*}  
x+\Lambda_r = \bigcup_{k=-1}^{\log_2(\frac r {8})} (x+ \Lambda_{2^{k+1}})\setminus (x+\Lambda_{2^k}) \bigcup 
(y+ \Lambda_{2^{k+1}})\setminus (y+\Lambda_{2^k})  \bigcup A_{x,y,r}\,,
\end{align*}
where $A_{x,y,r}$ stands for all the points which remain outside the dyadic annuli. (Also we define for convenience $\Lambda_{1/2}:= \emptyset$).

Under this decomposition, by using the triangle inequality and then Cauchy-Schwarz on $\sigma(x)-\sigma(y)=\sum_{e\in \vec{E}(x+\Lambda_r)} u(e) \nabla \sigma(e)$ we find, 
\begin{align*}  
\| \sigma(x) - \sigma(y)\|_2 \leq & O(1) \sum_{k=-1}^{\log_2(\frac r {8})}  \sqrt{2^{2k} * \frac 1 {2^{2k}}}
\Big( \sqrt{\DE_{x+ \Lambda_{2^{k+1}}}(\sigma)} + \sqrt{\DE_{y+ \Lambda_{2^{k+1}}}(\sigma)} \Big) \\
& + \sqrt{r^2 \frac 1 {r^2}}
O(1) \sqrt{\DE_{x+ \Lambda_{r}}}\,.
\end{align*}
(N.B. The factor $\frac 1 {2^{2k}}$ comes from the estimate $|u|^2(e) \leq 2^{-2k}$ in the annulus at distance $2^k$ from $x$ or $y$). 
Now, using again Lemma \ref{l.dyadic} as we did above together with estimate \eqref{e.lr}, we have for each $k$, 
\begin{align*}  
\DE_{x+ \Lambda_{2^{k+1}}}(\sigma) \vee 
\DE_{y+ \Lambda_{2^{k+1}}}(\sigma) \leq C (\frac {2^{k+1}} L)^{2\alpha}\,,
\end{align*}
which implies 
\begin{align*}  
\| \sigma(x) - \sigma(y)\|_2 \leq O(1) \sum_{k=0}^{\log_2(\frac r {4})}  
\Big( \frac {2^k} {L}\Big)^\alpha + O(1) \Big( \frac {r} {L}\Big)^\alpha\,,
\end{align*}
which thus ends the proof.  \qed

\begin{remark}
It is well known that once Hölder regularity is obtained, it is in general easier to improve it to higher notions of regularity such as ``Discrete-smooth'' regularity (by using more classical analysis tools such as Schauder-type estimates).
 See for example the reference  \cite{thomas2001regularity} which implements this under the extra assumption that $\sigma$ stays inside an open set of the half-sphere $\S^2_+$. This is a rather restrictive assumption in general but our Hölder regularity result Theorem \ref{th.H} implies it must indeed be the case in the bulk for harmonic maps in large domains. We thus refer to  Section 3.2 of \cite{thomas2001regularity} combined with our Theorem \ref{th.H} to obtain a greater regularity. 
 \end{remark}

%
%
%

\section{Turning on the temperature}\label{s.T}

\subsection{Probabilistic description of Gibbs states when $d=2$ and $N\geq 3$.}

$ $



Recall that our main {\em small gradient} estimate, Theorem \ref{th.SG}, is stated for Gibbs measures in finite domains and holds uniformly on boundary conditions. 
Using the very useful stochastic domination with product measure from \cite{liggett_domination_1997}, it readily implies the following stronger version of Corollary \ref{c.SG}.  It gives a quantitative control on the structure of all Gibbs measure of the spin-$O(N)$ model in dimension $d=2$. 

\begin{corollary}\label{c.SG2}
Let $d=2$ and $N \geq 3$. 
For any $\delta>0$ and any $p\in (0,1)$, there exists $L_2=L_2(\delta) \,\, (\geq \Omega(1) \delta^{-\kappa})$ and
$\beta_2=\beta_2(\delta,p)$ \margin{C: we can make sure $L_2$ does not depend on $p$, only some margin with the groundstate will suffice here. The temperature on the other hand will be affected by $p$. \dgreen{N: I agree}} such that  if $\beta \geq \beta_2(\delta,p)$ then any infinite-volume Gibbs measure
\footnote{It is conjectured that there is a unique such Gibbs measure at any fixed $\beta<\infty$, but this is still open, to our knowledge.} of the spin-$O(N)$ model on $\Z^2$ satisfies the following coarse-grained stochastic domination property.

Let $\mathrm{Grid}_{L_2}:= \{ Q_t \}_{t \in T}$ be a partition of $\Z^2$ into blocks which are translates of $\Lambda_{L_2} ( = \{- L_2, \ldots, L_2\}^2 \subset \Z^2)$. $T$ here is any choice of countable set to index this partition (for example $\Z^2$). 

If $\sigma$ is sampled according to an infinite volume limit of the spin-$O(N)$ model on $\Z^2$, we associate to $\sigma$ the following site-percolation $\omega=\omega(\sigma)$ indexed by $T$: for every $t\in T$, we let 
\begin{align*}  
\begin{cases}
\omega_t = \omega(\sigma)_t := 1 & \text{  if  for all $i\sim j \in Q_t$ } \|\sigma_i -\sigma_j\|_{2,\R^N} \leq \delta. \\
\omega_t = \omega(\sigma)_t := 0 & \text{  otherwise }\,.
\end{cases}
\end{align*}
\margin{C: or maybe or more global condition as in Theorem \ref{th.H} ? \dgreen{I believe this is heavy enough of a statement?}}
We claim that under the above assumptions (most importantly if $\beta \geq \beta_2(\delta,p)$), then the site configuration $\omega$ stochastically dominates a $p$-Bernoulli site percolation on $T$, i.e. 
\begin{align*}  
\{ \omega(\sigma)_t, t\in T\} \succcurlyeq (p \delta_1 + (1-p)\delta_0)^{\otimes T}\,.
\end{align*}
(Notice that we do not know a priori that the left-hand side $\{ \omega(\sigma)_t, t\in T\}$ is translation invariant, while the right-hand site is). 

This means in a very quantitative way that most edges in low temperature spin-$O(N)$ models have small gradients. 
\end{corollary}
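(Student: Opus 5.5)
The plan is to derive Corollary \ref{c.SG2} from Theorem \ref{th.SG} via the domination criterion of Liggett--Schonmann--Stacey \cite{liggett_domination_1997}. That criterion says: if a $\{0,1\}$-valued field $\{\omega_t\}_{t\in T}$ on a graph of bounded degree $\Delta$ satisfies, for every $t$,
\begin{align*}
\P\bigl[\omega_t=1 \bigm| \omega_s, \, s\notin \mathcal N(t)\bigr] \geq 1-q \quad \text{a.s.},
\end{align*}
where $\mathcal N(t)$ is a neighbourhood of $t$ of bounded size, then $\omega$ dominates a Bernoulli($p$) product measure, with $p=p(q,\Delta)\to 1$ as $q\to 0$. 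So the task reduces to producing such a uniform conditional bound. The natural candidate is the bound from Theorem \ref{th.SG}, which holds uniformly in boundary conditions.

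The steps, in order, are as follows.

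First I choose the block size. I apply Theorem \ref{th.SG} with an accuracy $\delta'\le\delta$, still to be fixed, and get $L=L(\delta')\asymp \delta'^{-\kappa}$ and $\beta(\delta')$. I set $L_2 := \floor{L/4}$, so that each block $Q_t$ is a translate of $\Lambda_{L_2}$ and its enlarged box $\hat Q_t := c_t+\Lambda_L$ contains $Q_t$ inside its central quarter. The neighbourhood $\mathcal N(t)$ is the set of $s$ with $Q_s\cap \hat Q_t\neq\emptyset$; its size is bounded by a numerical constant $\Delta$.

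Second comes the conditional estimate. Any infinite-volume Gibbs measure $\mu$ (a subsequential limit) satisfies the DLR equations. This is inherited from the finite-volume specifications, since these are Feller continuous on the compact space $(\S^{N-1})^{\Z^2}$. Hence, conditionally on $\sigma_{\Z^2\setminus \hat Q_t^{int}}$, the law of $\sigma_{\hat Q_t}$ is $\P^{\bar\sigma}_{\hat Q_t,\beta}$ with $\bar\sigma$ the trace on $\p\hat Q_t$. Theorem \ref{th.SG}, applied with $\delta'$ small enough that $\delta'<q$ and $\beta\ge\beta(\delta')$, gives
\begin{align*}
\P\bigl[\omega_t=0 \bigm| \sigma_{\Z^2\setminus \hat Q_t^{int}}\bigr]<\delta'\le q .
\end{align*}
The events $\{\omega_s\}_{s\notin\mathcal N(t)}$ are measurable with respect to $\sigma_{\Z^2\setminus \hat Q_t^{int}}$. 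Conditioning further down (the tower property) therefore gives the Liggett--Schonmann--Stacey hypothesis with parameter $q$.

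Third, I choose the parameters so that $q=q(p,\Delta)$ yields domination of Bernoulli($p$). Then I take $\delta'=\min(\delta,q/2)$ and $\beta_2=\beta(\delta')$. To respect the remark that $L_2$ should depend only on $\delta$, I would keep $L$ tied to $\delta$ and push only $\beta$ depending on $p$. This requires a version of Theorem \ref{th.SG} whose failure probability goes to $0$ as $\beta\to\infty$ at fixed $L$, which should follow from its proof: as $\beta\to\infty$ the measure concentrates near harmonic maps, and Theorem \ref{th.H} applies to these. If that refinement is unavailable, I would accept $L_2$ depending on $p$ as well, with $L_2\geq\Omega(1)\delta^{-\kappa}$ still holding since $\delta'\le\delta$.

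The main obstacle is the careful justification that arbitrary subsequential limits satisfy DLR with the finite-box specification, together with the measurability and neighbourhood bookkeeping. The domination itself is a black box from \cite{liggett_domination_1997}.
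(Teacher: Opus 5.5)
Your proposal is correct and follows the paper's route. The paper's entire proof is the remark that Theorem \ref{th.SG}, being uniform in boundary conditions, combined with the DLR/Markov property and the Liggett--Schonmann--Stacey criterion \cite{liggett_domination_1997}, ``readily implies'' the corollary; your write-up supplies the neighbourhood, measurability and parameter bookkeeping that the paper leaves implicit.

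You do not need the fallback in which $L_2$ depends on $p$. The refinement you ask for is already in the paper as Proposition \ref{th.SGquant}: its failure bound $\exp\bigl(-\tfrac{1}{C}\beta\delta^2 + C L^2\ln(1/\delta)\bigr)$ tends to $0$ as $\beta\to\infty$ at fixed $L\ge\delta^{-c}$. So you can fix $L_2\approx\delta^{-c}/4$ depending on $\delta$ alone and let only $\beta_2$ depend on $p$. Your own sketch of this point (concentration near ground states as $\beta\to\infty$, plus Theorem \ref{th.H} with a margin) is also valid.
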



Before the proof of Theorem \ref{th.SG}, let us make a few remarks. 
\begin{remark}  
We believe (but do not prove) that this statement of Theorem \ref{th.SG} is wrong in the case of $N=2$ !
Indeed, if we equip the boundary of a large domain $\Lambda_L$ with a boundary condition coming from a non-trivial harmonic map that has a vortex at the origin as in Proposition  \ref{pr.ExS1}, one of the associated ground states produces a vortex around the origin of the graph $\Lambda_L$. (See Figure \ref{f.XY}. It does not exactly correspond to this situation but it gives an idea.) See for example the references \cite{bethuel1994ginzburg,sandier2007vortices,pacard2012linear} for the Ginzburg--Landau model in the continuum. \margingreen{N:I don't know if we want to cite this here}

Once the temperature is turned on but is sufficiently small (i.e. $T=T(L)$ small enough and $\beta=\beta(L)=T(L)^{-1}$ large enough), the Gibbs measure should not fluctuate much around this ground state. See Open Problem \ref{op.vortex}.
\margin{C: maybe this is now under reach ?? By a slight adaptation of the proof of non-trivial $\S^1$ harmonic maps ? still something to be done though $:)$ -> N: One would need to be able to show uniqueness of minimizer (uniqueness of the position of the vortex) and much more on this minimizer, it does not seem simple to me }
\end{remark}


%

\begin{remark}  
On the other hand, we believe Corollaries \ref{c.SG} and \ref{c.SG2} may in fact also apply to the $2d$ XY model, despite the lack of Theorem \ref{th.SG}. See Open Problem \ref{op.perco}. 
\end{remark}

\subsection{Proofs in the case $d=2$. }

In order to prove Theorem \ref{th.SG}, it will be sufficient to obtain a good enough upper bound on each fixed edge $i \sim j \in \Lambda_{L/2}$ and then to conclude by union bound. (N.B. To obtain a slightly better estimate, still polynomial in $\delta$ though, we could organise the union bound in a more collective way using a suitable geometric dyadic structure of scales).   

To obtain the bound for one edge, we shall use repeatedly the following scheme:
\begin{align*}  
&\text{Markov property at $\p \Lambda_R$} \\
\, \to \,&\text{Upper bound on interior Dirichlet Energy at $T=0$ (harmonic maps)}  \\
 \, \to \,& \text{Upper bound on interior Dirichlet Energy at $T>0$ with high probability} \\
\, \to \,& \text{Markov property at $\p \Lambda_{R-1}$} \\
\, \to \, & \ldots  \\
 \, \Rightarrow \, & \text{ ODE satisfied with high probability across scales.}
\end{align*}

%

\begin{lemma}\label{l.T}
For any finite $\Lambda \subset \Z^2$ and any boundary condition $\bar \sigma \in (\S^{N-1})^{\p \Lambda}$, let 
\begin{align*}  
GS_{\Lambda}(\bar \sigma): = \min_{u\in (\S^{N-1})^{\Z^2}, u_{\md \p \Lambda}=\bar \sigma} \,\,\, 
\DE_\Lambda(u).
\end{align*}
Then for any $\gamma, \alpha>0$, there exists $\beta(\Lambda,\gamma, \alpha)>0$ such that for all $\beta \geq \beta(\Lambda, \gamma, \alpha)$, one has
\begin{align*}  
\FK{\Lambda, \beta}{\bar \sigma}{ \DE_\Lambda(\sigma)>  4\, GS_{\Lambda}(\bar \sigma)  + \gamma^2 } < \alpha \,.
\end{align*}

\end{lemma}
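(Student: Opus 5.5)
Since $\Lambda$ is fixed and finite, the statement is a Laplace-type concentration: as $\beta\to\infty$ the Gibbs measure concentrates on configurations whose energy is close to the ground state energy. We write $E(\sigma):=\frac12\sum_{i\sim j\in\Lambda}\|\sigma_i-\sigma_j\|^2$ for the exponent in the Gibbs weight. On configurations agreeing with $\bar\sigma$ on $\p\Lambda$, $E$ and $\DE_\Lambda$ agree up to a fixed normalisation constant $c_0\in\{1/2,1,2\}$, depending on which convention for $\DE$ is used; this is the only source of the factor $4$ in the statement. We show that for any $\eta>0$,
\[
\FK{\Lambda,\beta}{\bar\sigma}{E(\sigma)>E_{\min}+\eta}\to 0 \quad\text{as }\beta\to\infty,
\]
uniformly in $\bar\sigma$, where $E_{\min}$ is the minimum of $E$ under the boundary constraint. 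Since $E_{\min}$ corresponds to $GS_\Lambda(\bar\sigma)$ up to the same constant $c_0$, choosing $\eta$ of order $\gamma^2$ makes the event $\{\DE_\Lambda(\sigma)>4GS_\Lambda(\bar\sigma)+\gamma^2\}$ contained in $\{E>E_{\min}+\eta\}$.

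\textbf{Step 1 (lower bound on the partition function).} Fix a minimiser $u$, which exists by compactness. Let $m=|\Lambda\setminus\p\Lambda|$ be the number of free spins. On the set $U_\rho$ of configurations with $\|\sigma_i-u_i\|\le\rho$ for every free $i$, each edge term changes by at most $O(\rho)$, since all spins have norm $1$ and the terms are Lipschitz. Hence $E\le E_{\min}+C|E(\Lambda)|\rho$ on $U_\rho$. The set $U_\rho$ has product measure at least $(c_N\rho^{N-1})^m$, so
\[
Z^{\bar\sigma}_{\Lambda,\beta}\ \ge\ e^{-\beta(E_{\min}+C|E(\Lambda)|\rho)}\,(c_N\rho^{N-1})^m .
\]

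\textbf{Step 2 (upper bound on the bad event).} The numerator is trivially bounded: the unnormalised weight of $\{E>E_{\min}+\eta\}$ is at most $e^{-\beta(E_{\min}+\eta)}$, because the $\lambda_{\S^{N-1}}$ are probability measures. Choosing $\rho$ with $C|E(\Lambda)|\rho=\eta/2$, we obtain
\[
\FK{\Lambda,\beta}{\bar\sigma}{E>E_{\min}+\eta}\le e^{-\beta\eta/2}\,(c_N\rho^{N-1})^{-m}.
\]
This is smaller than $\alpha$ once $\beta\ge\beta(\Lambda,\gamma,\alpha)$, and the threshold depends only on $|\Lambda|$, $N$, $\gamma$ and $\alpha$. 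In particular it is uniform in $\bar\sigma$, since Step 1 used only Lipschitz bounds that hold for all unit spins.

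\textbf{Main obstacle.} There is no real analytic obstacle; the only delicate points are bookkeeping. First, the bound must be uniform over the boundary condition $\bar\sigma$, which holds because the Lipschitz bound in Step 1 does not depend on $u$ or $\bar\sigma$. Second, the normalisations of $\DE_\Lambda$ (sum over unoriented edges, with or without the factor $\frac12$) must be matched to the Gibbs exponent. The constant $4$ in the statement is generous enough to absorb any factor-$2$ discrepancy between these conventions. If the constant were instead meant as $1$, the same argument would still prove the claim with $\eta=\gamma^2/c_0$.
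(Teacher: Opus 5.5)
Your proof is correct and has the same skeleton as the paper's. You lower-bound $Z^{\bar\sigma}_{\Lambda,\beta}$ by the mass of a small product neighbourhood of a ground state, and you bound the weight of the bad event trivially by $e^{-\beta\cdot(\text{threshold})}$.

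The difference is how the energy is controlled on that neighbourhood.

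\begin{itemize}
\item The paper uses radius $\rho=\gamma/(4\sqrt{|E(\Lambda)|})$ and the crude inequality $(a+b+c)^2\le 3(a^2+b^2+c^2)$. This only gives $\DE_\Lambda(\sigma)\le 3\,GS_\Lambda(\bar\sigma)+\tfrac12\gamma^2$ there, and that multiplicative loss is exactly where the factor $4$ in the statement comes from.
\item You use that each edge term is $O(1)$-Lipschitz in the spins, since they are unit vectors. On a neighbourhood of radius $\rho\asymp\gamma^2/|E(\Lambda)|$ the energy is then at most $GS_\Lambda(\bar\sigma)+\tfrac12\gamma^2$, up to normalisation. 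This proves the lemma with $4$ replaced by $1$.
\end{itemize}

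The price of your route is a smaller neighbourhood. The entropy term $m(N-1)\log(1/\rho)$ in the quantitative version (Subsection~\ref{ss.quant}) roughly doubles, which is immaterial there. The gain is a sharper statement and marginally better constants where the lemma is used, e.g.\ in the events $A_k$ and in the probabilistic version of Lemma~\ref{l.dyadic}.

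Two small corrections to your write-up.

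\begin{itemize}
\item Your explanation of the factor $4$ is wrong: it has nothing to do with normalisation conventions, and nothing needs to be absorbed. The Gibbs exponent and $\DE_\Lambda$ differ by a constant $c_0$ that multiplies $E$ and $E_{\min}$ alike. Also $GS_\Lambda\ge 0$ gives $\{\DE_\Lambda>4GS_\Lambda+\gamma^2\}\subset\{\DE_\Lambda>GS_\Lambda+\gamma^2\}$. So simply take $\eta=c_0\gamma^2$.
\item Cap $\rho$ at $1$ so that $(c_N\rho^{N-1})^m$ is a valid lower bound for the product of spherical-cap measures. Shrinking $\rho$ only lowers the energy bound, so nothing else changes.
\end{itemize}
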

\begin{remark}
   will discuss in Subsection \ref{ss.quant} how to make $\beta(\Lambda,\gamma,\alpha)$ quantitative. 
\end{remark}

\ni
{\em Proof of Lemma \ref{l.T}.}

Let $\sigma^0$ be one of the ground state configurations on $\Lambda$ that coincides with $\bar \sigma$ on $\partial \Lambda$. I.e. $\DE_\Lambda(\sigma^0) = GS_\Lambda(\bar \sigma)$.  Let $E(\Lambda)$ be the number of edges $| \{ i \sim j, i \in \Lambda\} |$. 

Any configuration $\sigma$ such that for all $i\in \Lambda$, $ |\sigma_i - \sigma^0_i|^2 \le \frac{ \gamma^2 }{ 16 E(\Lambda)}$  satisfies \footnote{In order to improve the quantitative dependence of $\beta$ on $\delta$, one could improve the analysis here by making the neighborhood of $\sigma$ larger at each $\sigma_i$, using instead $ |\sigma_i - \sigma^0_i|^2 \le \frac{  3GS_\Lambda(\bar \sigma) + \gamma^2 }{ 16 E(\Lambda)}$. For simplicity, we do not follow this path here.}
\begin{align*}  
\DE_\Lambda(\sigma)  & = \frac 1 2  \sum_{i\sim j \in \Lambda} \| \sigma_i -\sigma_j\|^2  \\
& \leq \frac 3 2  \left(  \sum_{i\sim j  \in \Lambda}  \| \sigma_i - \sigma_i^0\|^2 +  \| \sigma^0_i - \sigma_j^0\|^2 + \| \sigma_j - \sigma_j^0\|^2\right) \\
& \leq  3 GS_{\Lambda}(\bar \sigma)  +  \frac 6 {16}  \gamma^2  \\
& \leq 3 GS_{\Lambda}(\bar \sigma) + \frac 1 2 \gamma^2\,.
\end{align*}

In particular, the set of such $\sigma$ with $\DE_{\Lambda}(\sigma) \le  3 GS_{\Lambda}(\bar \sigma) + \frac 1 2 \gamma^2$ has a positive Lebesgue  measure that is bounded from below by 
\begin{align*}  
C(\Lambda, \gamma) := \left\{  c(N) \Big(\frac{  \gamma^2 }{ 16 E(\Lambda)} \wedge 1 \Big)^{\frac  {N-1} 2} \right\}^{|\Lambda|}
\end{align*}

This gives us the following lower bound on the partition function namely
\begin{align*}  
Z_{\Lambda,\beta}^{\bar \sigma} \geq C(\Lambda, \gamma)  \exp\Big( - \beta  (3 GS_{\Lambda}(\bar \sigma) + \frac 1 2 \gamma^2) \Big).
\end{align*}

Thus,
\begin{align*}
\FK{\Lambda, \beta}{\bar \sigma}{ \DE_{\Lambda}(\sigma) >  4\, GS_{\Lambda}(\bar \sigma)  + \gamma^2 }  & = 
\frac 1 {Z_{\Lambda,\beta}^{\bar \sigma}} 
\int_{\sigma , \DE_{\Lambda}(\sigma) >  4\, GS_{\Lambda}(\bar \sigma)  + \gamma^2  } \exp\Big( - \beta \DE^{\Lambda}(\sigma) \Big) d \sigma  \\ 
&   \le \frac{
\exp\Big( - \beta (4\, GS_{\Lambda}(\bar \sigma)  + \gamma^2)\Big)
}{
Z_{\Lambda,\beta}^{\bar \sigma}
} \\
&\le \frac{\exp\Big(- \beta ( GS_{\Lambda}(\bar \sigma)  + \frac 1 2 \gamma^2) \Big)}
{
C(\Lambda,  \gamma)  
}\le  \frac{\exp\Big(- \frac 1 2 \beta \gamma^2 \Big)}
{
C(\Lambda,  \gamma) }
\end{align*}
which can be made smaller than $\alpha$ by choosing $\beta$ sufficiently large. 
\margin{\bf C:  when we will need further below -- subsection \ref{ss.quant} -- to make this quantitative (in $\gamma, GS_\Lambda(\bar \sigma)$ etc.) this will require a bit of work, but I feel it is better to do it a posteriori than all at once otherwise it would be impossible to read and understand the crude bound. What do you think ?\dgreen{I agree}}

\qed

\ni
{\em Proof of Theorem \ref{th.SG}.}

We shall fix $L=L(\delta)$ and $\beta=\beta(\delta)$ sufficiently large further below. 

We first follow the ODE proof of Theorem \ref{th.DE} (from subsection \ref{ss.ODE}) in order to show the following estimate.
\begin{lemma}\label{l.odeT}
There exists a constant $C=C(N)$ which does not depend on $\delta$ nor on the scale $L(\delta)$ such that the following holds: for any $\alpha>0$,  if $\beta$ is chosen sufficiently large (depending only on $\alpha$ and $L(\delta)$), then uniformly over  $\bar \sigma$,
\begin{align*}  
\FK{\Lambda_L,\beta}{\bar \sigma}{\DE_{\Lambda_{L/2}}(\sigma) \leq C(N)} \geq 1 -\alpha\,.
\end{align*}
\end{lemma}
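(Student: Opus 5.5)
We mimic the discrete ODE argument from Subsection~\ref{ss.ODE}, replacing deterministic harmonic-map bounds by high-probability bounds from Lemma~\ref{l.T}, applied on the nested boxes $\Lambda_k$, $k=L/2,\ldots,L$. Set $H(k):=\DE_{\Lambda_k}(\sigma)$.

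\textbf{Step 1: ground-state bound at every scale.} For each $k\in\{L/2,\ldots,L\}$, estimate~\eqref{eq:upperbound}, i.e.\ Lemma~\ref{l.ext} combined with Lemma~\ref{lemma:projection_on_sphere_trhough_a_good_point}, applies to any boundary data on $\p\Lambda_k$, harmonic-map or not. It gives, for $d=2$ and $N\ge3$,
\[
GS_{\Lambda_k}(\sigma_{|\p\Lambda_k}) \le C_0\, k^{1/2}\sqrt{\DE_{\p\Lambda_k}(\sigma)}.
\]

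\textbf{Step 2: Markov property and a union bound.} Condition on $\sigma$ outside $\Lambda_k^{int}$. The conditional law inside is $\P_{\Lambda_k,\beta}^{\sigma_{|\p\Lambda_k}}$, and Lemma~\ref{l.T} holds uniformly over boundary conditions. Apply it with $\alpha' = \alpha/L$ and a fixed $\gamma$ to be chosen. For $\beta\ge \max_{k\le L}\beta(\Lambda_k,\gamma,\alpha/L)$, which depends only on $L$, $\gamma$ and $\alpha$, the following holds with probability at least $1-\alpha$, simultaneously for all $k$:
\[
H(k)\le 4C_0\, k^{1/2}\sqrt{\DE_{\p\Lambda_k}(\sigma)}+\gamma^2.
\]
Since $H(k)-H(k-1)\ge \DE_{\p\Lambda_k}(\sigma)$, on this event we get
\[
H(k)-H(k-1)\ \ge\ \frac{\big((H(k)-\gamma^2)_+\big)^2}{16C_0^2\,k}.
\]

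\textbf{Step 3: integrate the ODE, which is the main obstacle.} The additive $\gamma^2$ error spoils the clean telescoping of $1/H$. We handle it with a dichotomy. Take $\gamma^2:=1$ and look at $H(L/2)$, the quantity to be bounded.

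\emph{Case $H(L/2)\le 2$.} Then the bound holds with $C(N)\ge 2$.

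\emph{Case $H(L/2)>2$.} By monotonicity $H(k)\ge 2\gamma^2$ for all $k\ge L/2$, so $H(k)-\gamma^2\ge H(k)/2$. The inequality becomes
\[
H(k)-H(k-1)\ge \frac{H(k)^2}{64C_0^2 k}\ge \frac{H(k)H(k-1)}{64C_0^2 k}.
\]
Telescoping as in Subsection~\ref{ss.ODE} gives
\[
\frac{1}{H(L/2)}\ \ge\ \sum_{k=L/2+1}^{L}\frac{1}{64C_0^2 k}\ \ge\ \frac{\log 2}{128C_0^2}.
\]
Hence $H(L/2)\le C'(N)$. Set $C(N):=\max(2,C')$. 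This constant is independent of $\delta$ and $L$: only the threshold on $\beta$ depends on $L$ and $\alpha$.

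The delicate points are that Lemma~\ref{l.T} must be used after conditioning, with its uniformity over boundary conditions, and that the union bound over the $L/2$ scales must be absorbed into $\beta$.
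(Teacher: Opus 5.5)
Your proof is correct and follows essentially the same route as the paper. Both arguments use the Markov property at each layer $\p\Lambda_k$, Lemma~\ref{l.T} with $\gamma^2=1$ together with a union bound over the $L/2$ scales absorbed into $\beta$, the bound $GS_{\Lambda_k}\le C_0\sqrt{k\,\DE_{\p\Lambda_k}(\sigma)}$ from Lemmas~\ref{l.ext} and~\ref{lemma:projection_on_sphere_trhough_a_good_point}, and the telescoping of $1/H$.

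The only difference is in how the additive error is absorbed. The paper uses a stopping scale $U$, the largest $k$ with $GS_{\Lambda_k}\le 1$: above it $H(k)\le 5\,GS_{\Lambda_k}$, and at it $H(U)\le 5$. Your dichotomy on $H(L/2)$, combined with the monotonicity of $H$, does the same job with a plain union bound and no stopping time.
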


The proof of this Lemma follows the same ODE as for Theorem \ref{th.DE} (in Subsection \ref{ss.ODE}) except we need to deal with some additional disorder. To handle this disorder, we will just prove that if $\beta$ is large enough then by loosing some multiplicative constant, we may stay as close as we want to the ODE from scale $k$ down to scale $k-1$. 

Our proof will explore in a  Markovian way first the slice $\sigma_{\p \Lambda_{L}}$ and then $\sigma_{\p \Lambda_{L-1}}$ and so on and so forth. That is to say that, when analysing the difference between the energy at scale $k$ and at scale $k-1$, we apply the Markov Property at scale $k$ around $\p \Lambda_k$. 

We also need to introduce the following stopping time: let $N$ be the first scale along the inward Markovian exploration procedure defined by 
\begin{align*}  
U:= \sup \Big\{\frac L 2 \leq k \leq L  \text{ s.t. } GS_{\Lambda_k}(\sigma_{\md \p \Lambda_k}) \leq 1  \Big\}\,. 
\end{align*}
(We define $U:=0$ if there is no such maximal radius).


For any $\frac L 2  \leq k \leq L$, let $H(k)$ be the random variable (under the measure $\P_{\Lambda,\beta}^{\bar \sigma}$)
\begin{align*}  
H(k):= \DE_{\Lambda_k}(\sigma), \,\, \sigma \sim \P_{\Lambda,\beta}^{\bar \sigma}\,.
\end{align*}

%
%
%

Let $k$ be any scale $\frac L 2  \leq U \leq k \leq L$. 
If $k>U$, then choose $\gamma^2$ to be equal to $1$ in Lemma \ref{l.T} and consider the event 
\begin{align*}  
A_k:=\left\{  H(k) \leq   5\, GS_{\Lambda_k}(\sigma)  \right\}~.
\end{align*}
We have that 
\begin{align*}  
\FK{\Lambda_k,\beta}{\sigma_{\p \Lambda_k}}{A_k}\geq 1-\alpha\,. 
\end{align*}
If instead $k$ is equal to the stopping radius $U$, then we have no control on how small $GS_{\Lambda_{U=k}}(\sigma_{\p \Lambda_U})$ is, but taking $\gamma^2=1$, notice we still have by applying Markov property at that scale that
\begin{align*}  
\FK{\Lambda_U,\beta}{\sigma_{\p \Lambda_U}}{
H(U) \leq   5  }\geq 1-\alpha\,. 
\end{align*}
As such, if we ever reach stopping radius $U$, by choosing any constant $C(N) \geq 5$ in Lemma \ref{l.odeT}, we are fine. Let us call this event $\tilde A_U:= \{ H(U) \leq 5 \}$.

\ni
Back to the case where $k$ is any scale $\frac L 2  \leq U \leq k \leq L$. Recall that Lemma \ref{l.ext} together with Lemma \ref{lemma:projection_on_sphere_trhough_a_good_point} imply that, given $\sigma_{\md \p \Lambda_k}$, 
\begin{equation} \label{eq:upperbound2}
\DE_{\Lambda_k}(\sigma^{opt}) =: GS_{\Lambda_k}(\sigma_{\md \p \Lambda_k})   \le   C   \sqrt{k} \sqrt{\text{DE}_{\partial \Lambda_{k} }(\sigma)}   ~. \end{equation}

As in the deterministic case, we thus have
\[ H(k) - H(k - 1) = \sum_{e \in \partial \Lambda_k } (\nabla_e \sigma)^2 + \sum_{\substack{e = \{x,y\}, \\  x \in \partial \Lambda_{k-1}, y \in \Lambda_{k} }} (\nabla_e \sigma)^2  \ge \text{DE }_{\partial \Lambda_k}(\sigma) \]



As such, if the event $A_{k}$ holds (and notice it does with high conditional probability  given $\sigma_{| \p \Lambda_{k}}$ when $\beta$ is high enough), then 
\[
H(k ) - H(k-1) \ge \frac{\Big(GS_{\Lambda_k}(\sigma_{\md \p \Lambda_k}) \Big)^2}{C^2 k } 
\overset{\text{event }A_{k}}\geq  \frac 1 {25 C^2 k} H(k)^2
\]
\margin{C: This step would in fact not work well at all with a Markovian Definition of $H(k)$ so our approach seems rigid here. }

As in Subsection \ref{ss.ODE}, dividing by $H(k) H(k-1)$ and summing from $k=L$ to $k=\frac L 2$ we get on the event 
\begin{align*}  
\{ N=0\}  \bigcap  \bigcap_{k= \frac L 2}^L A_k
\end{align*}
 the following lower bound on $H(\frac L 2)^{-1}$: 

\[ \frac{1}{H(L/2)} \ge \frac{1}{H(L/2)} - \frac{1}{H(L)}  \ge \hat C \sum_{k = L/2 }^L \frac{1}{k}  \geq \hat C(N)\,.
\]
We also noticed that on the event $\{ U \neq 0\} \bigcap \tilde A_U$, we have $H(\frac L 2) \leq H(U) \leq 5$. 
Since the probability of 
\begin{align*}  
\Big(
\{U = 0\} \bigcap \big( \bigcup_{k= \frac L 2}^L  A_k^c \big) \Big)  \bigcup  \Big(\{U\neq 0\} \bigcap  \tilde A_U^c \Big)
\end{align*}
can be made arbitrary small (when choosing $\beta$ sufficiently large), by combining Markov property with union bound, this concludes the proof Lemma \ref{l.odeT} with $C(N):= 5 \vee \hat C(N)^{-1}$. \qed

\medskip

\ni
{\em End of the proof of Theorem \ref{th.SG}.}
At this stage, notice we did not yet fix the scale $L=L(\delta)$ (which ultimately impacts how large $\beta=\beta(\delta)$ needs to be).

Let us briefly explain how to conclude the proof of Theorem \ref{th.SG}.
A straightforward consequence of Lemma \ref{l.odeT} is that  uniformly over boundary conditions, one has 
\begin{align*}  
\FK{\Lambda_L,\beta}{\bar \sigma}{GS_{\Lambda_{L/2}}(\sigma_{\p \Lambda_{L/2}}) \leq C(N)} \geq 1 -\alpha\,.
\end{align*}

The advantage of this expression is that it is more Markovian. (It does not involve the yet to be explored spin system inside $\Lambda_{L/2}$).

We claim that by adapting the deterministic dyadic argument from Subsection \ref{ss.HR} to this random setting as we did above, i.e.  by using repeatedly the same Markovian exploration together with Lemma \ref{l.T}, we conclude the proof of Theorem \ref{th.SG}. 
One significant different is that in this second step of the proof, we may no longer set $\gamma^2:=1$ in Lemma \ref{l.T} but instead the much smaller value $\gamma^2:=\delta^2$. (N.B. This could  be tuned in a more efficient way along dyadic scales if one were looking for a better quantitative control of $\beta=\beta(\delta)$). Finally, $L(\delta)$ needs to be fixed in this second step so that, as in Subsection \ref{ss.HR} in the deterministic case, we have enough scales to reduce the gradient at one edge. Slightly more scales are needed in the random case, as one looses an additional  multiplicative error at each step as in the above Markovian analysis.  Once $L(\delta)$ is chosen at this very final step, it influences the choice of $\beta=\beta(\delta)$ so that all previous probabilistic steps and union bounds over all edges in $\Lambda_{L/2}$ are simultaneously  valid with probability at least $1-\delta$. 

\qed

\subsection{Quantitative bound on $L(\delta)$ and $\beta(\delta)$.}\label{ss.quant}
The following quantitative version of Theorem \ref{th.SG} holds.
\begin{proposition}\label{th.SGquant}
Let $d=2$ and $N \geq 3$. 
There exists some number $c > 0$ that depends only on $N$ such that, for all $0 < \delta< \frac 12 $, all $L \ge \delta^{-c}$  and all $\beta \ge 1$, there exists some $C  = C(N) > 0$ such that uniformly over all boundary condition $\bar \sigma \in (\S^{N-1})^{\p \Lambda_L}$, one has 
\begin{align*}  
\FK{\Lambda_L, \beta}{\bar \sigma}{ \exists i\sim j  \in \Lambda_{L/4} \, \text{ s.t. } \|\sigma_i - \sigma_j\|_{2,\R^N} > \delta} \le \exp(-  \frac 1 C  \beta \delta^2  - C \ln(\delta) L^2) \,.
\end{align*}

\end{proposition}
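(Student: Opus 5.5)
The plan is to run the proof of Theorem \ref{th.SG} again, tracking every constant. The first step is to make Lemma \ref{l.T} quantitative. Its proof already gives, for any box $\Lambda$ with $|\Lambda|$ sites and $E(\Lambda)\le 2|\Lambda|$ edges,
\begin{align*}
\FK{\Lambda,\beta}{\bar\sigma}{\DE_\Lambda(\sigma)>4\,GS_\Lambda(\bar\sigma)+\gamma^2}\le \exp\Big(-\tfrac12\beta\gamma^2+C(N)\,|\Lambda|\log\big(\tfrac{|\Lambda|}{\gamma^2}\big)\Big).
\end{align*}
The entropy cost $|\Lambda|\log(|\Lambda|/\gamma^2)$ grows with the size of the box. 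So I will never apply this bound in the whole box $\Lambda_L$. I will only apply it in boxes of bounded size $\ell=\ell(\delta)\asymp\delta^{-c_0}$, where $c_0=c_0(N)$ is the exponent from the Hölder step. In such boxes the entropy term is $O(\ell^2|\ln\delta|)$.

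The second step is localization. Fix an edge $i\sim j\in\Lambda_{L/4}$ and look at the box $Q=i+\Lambda_\ell$, which sits inside $\Lambda_L$ once $L\ge\delta^{-c}$ with $c\ge c_0$ (taking $c$ a bit larger gives the needed room). By the spatial Markov property, conditionally on $\sigma_{\md\p Q}$ the law of $\sigma$ in $Q$ is $\P^{\sigma_{\p Q}}_{Q,\beta}$. The whole argument below is uniform in these boundary data, so it reduces to a statement about a single box of size $\ell$.

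The third step runs, inside $Q$, the Markovian inward exploration of Lemma \ref{l.odeT}, followed by the dyadic step of Lemma \ref{l.dyadic}. This uses about $\ell$ layers $\p(i+\Lambda_k)$ and then $\log_2\ell$ dyadic scales. At each layer or scale I invoke the quantitative Lemma \ref{l.T} in the current sub-box $i+\Lambda_k\subset Q$, so $|\Lambda|\le C\ell^2$. I use $\gamma^2=1$ for the ODE part and $\gamma^2=\delta^2/C$ for the dyadic part. Each use fails with probability at most $\exp(-\beta\delta^2/(2C)+C\ell^2|\ln\delta|)$. On the intersection of the good events, the deterministic arguments apply with constants multiplied by $5$ at each step. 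This gives $\DE_{i+\Lambda_{\ell/2}}\le C(N)$, and then $\|\sigma_i-\sigma_j\|^2\le C\lambda'^{\log_2\ell}$ for some $\lambda'<1$. For this to be $\le\delta^2$, I choose $c_0$ so that $\ell^{-2\alpha'}\lesssim\delta^2$. The main obstacle is to check that the extra factor $5$ lost at each dyadic step still leaves a contraction $\lambda'<1$. If it does not, the fallback is to apply Lemma \ref{l.T} with $\gamma^2$ proportional to the current energy and to compare ground-state energies $GS$ directly, rather than the random $H$. This requires redoing the proof of Lemma \ref{l.dyadic} with $GS_{\Lambda_j}(\sigma_{\p\Lambda_j})$ in place of $\DE_{\Lambda_j}(\sigma)$. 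That works because the good slice $j$ is chosen from the explored annulus only.

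The last step is the union bound. There are at most $C\ell$ uses of Lemma \ref{l.T} per edge and at most $CL^2$ edges in $\Lambda_{L/4}$. So the total failure probability is
\[
\exp\big(-\beta\delta^2/C+C\ell^2|\ln\delta|+C\log L\big)\le \exp\big(-\tfrac1C\beta\delta^2-C\ln(\delta)L^2\big),
\]
using $\ell^2\le L^2$ and $\log L\le L^2$. The constant $C$ is then adjusted, and the bound is trivial anyway unless $\beta\delta^2\gg L^2|\ln\delta|$.
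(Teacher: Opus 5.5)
Your argument is correct, but it is organized differently from the paper's. The paper quantifies the proof of Theorem~\ref{th.SG} on $\Lambda_L$ itself. It runs the Markovian ODE of Lemma~\ref{l.odeT} over the layers $\p\Lambda_k$, $L/2\le k\le L$, then $C\log(1/\delta)$ random dyadic steps from scale $L/2$. So Lemma~\ref{l.T} is used in boxes of size comparable to $L$, and its entropy cost is recorded as $-C\ln(\delta)|\Lambda|$. You instead first localize, via the domain Markov property, to a box $i+\Lambda_\ell$ with $\ell\asymp\delta^{-c_0}$ around each edge. You run the whole argument there, uniformly in the boundary data, and then union bound over the $O(L^2)$ edges.

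The paper's route is shorter, since it reuses Lemma~\ref{l.odeT} at scale $L$ as is. Yours has the advantage that the entropy bookkeeping is valid for every $L\ge\delta^{-c}$. The proof of Lemma~\ref{l.T} really gives a cost $\log(1/C(\Lambda,\gamma))\asymp|\Lambda|\log(E(\Lambda)/\gamma^2)$. Its $|\Lambda|\log|\Lambda|$ part comes from the radius $\gamma/\sqrt{16E(\Lambda)}$ of the neighbourhood of the ground state. For $|\Lambda|\asymp L^2$ this part is dominated by $L^2\ln(1/\delta)$ only when $L$ is at most polynomial in $1/\delta$. In the $\gamma=1$ ODE step there is in fact no $\delta$ at all, only $L^2\log L$. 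In your boxes $\log\ell\asymp\ln(1/\delta)$, so the cost $\ell^2|\ln\delta|$ is legitimate. You even get the stronger exponent $-\beta\delta^2/C+C\ell^2|\ln\delta|+C\log L$ before weakening it to the stated one.

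The obstacle you flag is not one, because Lemma~\ref{l.dyadic} is a hole-filling argument.
\begin{itemize}
\item For the good slice $j\in[R/2,R]$, the bound $GS_{\Lambda_j}(\sigma_{\p\Lambda_j})\le K\,\DE_{\Lambda_R\setminus\Lambda_{R/2}}(\sigma)$, with $K=3C_1C_2$, is deterministic and holds for any configuration.
\item On the event $\{\DE_{\Lambda_j}(\sigma)\le 4\,GS_{\Lambda_j}(\sigma_{\p\Lambda_j})+\gamma^2\}$ you therefore get $\DE_{\Lambda_{R/2}}(\sigma)\le 4K\,\DE_{\Lambda_R\setminus\Lambda_{R/2}}(\sigma)+\gamma^2$.
\item Adding $4K\,\DE_{\Lambda_{R/2}}(\sigma)$ to both sides yields $\DE_{\Lambda_{R/2}}\le\frac{4K}{1+4K}\DE_{\Lambda_R}+\gamma^2$.
\end{itemize}
The multiplicative loss only changes $K$. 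Iterating this affine recursion with $\gamma^2$ a small multiple of $\delta^2$ gives what you need, so the fallback is unnecessary.

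One correction, though: the good slice $j$ is not a stopping time of the inward exploration. It is selected using the energy of the part of the annulus lying inside $\Lambda_j$, so you cannot apply the Markov property at $\p\Lambda_j$ for that random $j$. Take instead a union bound over all $j\in[R/2,R]$ at each dyadic scale. This costs $O(\ell)$ invocations of Lemma~\ref{l.T} in total, which fits your count.
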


In particular, if $\frac{\beta}{L^2} \ge C^2 \frac{\ln(\frac 1 \delta)}{\delta^2}$ and $L \ge \delta^{-c}$, then the conclusion of Theorem \ref{th.SG} holds.

To see this, we follow the steps of the proof of Theorem \ref{th.SG} and give a quantitative bound for each of them.

Notice first that Lemma \ref{l.T} holds with
\begin{align*}  
\FK{\Lambda, \beta}{\bar \sigma}{ \DE_\Lambda(\sigma)>  4\, GS_{\Lambda}(\bar \sigma)  + \gamma^2 } \le \exp(- \frac 12 \beta \gamma^2 - C \ln(\delta ) |\Lambda| )\,.
\end{align*}
 as can be seen by looking at the last step of its proof.

 Lemma \ref{l.odeT} holds with 

 \begin{align*}  
\FK{\Lambda_L,\beta}{\bar \sigma}{\DE_{\Lambda_{L/2}}(\sigma) \leq C(N)} \geq  1 - \exp(- \frac 12 \beta - C_1 \ln(\delta ) L^2)\,.
\end{align*}

Indeed, in the proof of Lemma \ref{l.odeT} we apply Lemma \ref{l.T} at most $L$ times with $\gamma =1$ on scale that vary between $L$ and $\frac L 2$ and do a union bound on each of these steps. 

As for the last step  of the proof, we need the energy to go down from $C(N)$ to $\delta$ which requires $ C\log(\frac 1 \delta)$ applications of the probabilistic version of Lemma \ref{l.dyadic}. In particular, $L$ needs to be greater or equal than $\delta^{-c}$ for some $c > 0$ that depends only on $C(N)$ and the constant $\lambda$ of Lemma \ref{l.dyadic}. At every step we use Lemma \ref{l.T} with $\gamma = c \delta$ to ensure that the multiplicative shrinking occurs with probability at least $1 -\exp( - C \beta \delta^2 + C_1 \ln(\frac 1 \delta ) L^2)$. We need to apply this bound $c \ln(\frac{1}{\delta})$ times and apply a union bound. This gives an upper bound in
\[  c \ln(\frac 1 \delta ) \exp( - C \beta \delta^2 + C_1 \ln(\frac 1 \delta ) L^2) = \exp( - C \beta \delta^2 + C_2 \ln(\frac 1 \delta ) L^2)\] for the probability that the energy is larger than $\delta$ in $\Lambda_{\frac L 4 }$ given that it is smaller than $C(N)$ in $\Lambda_{\frac L 2 }$. Applying union bound again, we get 

\begin{align*}  
\FK{\Lambda_L, \beta}{\bar \sigma}{ \exists i\sim j  \in \Lambda_{L/4} \, \text{ s.t. } \|\sigma_i - \sigma_j\|_{2,\R^N} > \delta} \le \exp(-  C \beta \delta^2  - C \ln(\delta) L^2) \,.
\end{align*}

\subsection{When $d \geq 3$.}
The proof of Proposition \ref{pr.NoSurface} is identical to the first part of the proof of Theorem \ref{th.SG}. Indeed, it consists in exploring from layer $\p \Lambda_n$ down to layer $\p \Lambda_{\frac n 2}$ by applying Markov property at each successive $\p \Lambda_{k}$, $ \tfrac n 2 \leq k \leq n$. 

The first step of the Proof of Theorem \ref{th.SG} corresponds to the probabilistic Lemma \ref{l.odeT}. Its proof uses two main ingredients, (1) the probabilistic Lemma \ref{l.T} whose proof is insensitive to the dimension $d$ and (2) the deviation from the deterministic ODE from Section \ref{ss.ODE}. The way to handle the deviation from the ODE is the same in dimension $d\geq 3$, but the ODE itself of course is different. As such we obtain similarly as in $d=2$ that for a suitable constant $C$ (slightly larger than what the deterministic result would give), with probability going to 1 as $\beta\to \infty$, the event 
\[
\left\{ \text{DE}_{\Lambda_{n/2}}(\sigma) \le \begin{cases} C n^{d-2} & \text{if } N \ge 3 \\  C n^{d-2} \log^2(n) & \text{if } N = 2 \,.\end{cases}  
\right\}
\]
is satisfied.

\section{Non-trivial harmonic maps $\sigma : \Z^2 \to \S^1$}\label{s.VortexS1}

In this section, we prove Proposition \ref{pr.ExS1}. Its proof was suggested to us by ChatGPT.

\begin{remark}  
The construction easily extends to a non-trivial global minimizing map from $\sigma : \Z^d \to \S^1$ for any $d\geq 2$. 
\end{remark}

\ni
{\em Proof of Proposition \ref{pr.ExS1}.}


The approach we use is based on the work of Garroni Petrache and Spadaro \cite{garroni2024clearingoutdipolesminimisers2dimensional} in which they analyse vortices of $\S^1$ valued functions on finite planar graph. Consider a finite planar graph $G = (V,E)$ that has a boundary set $\partial V$ which is the set of vertices around a face $f_{\text{ext}}$ that we see as the exterior face. Let $\psi: V \to \R$ be a function and $\sigma: V \to \S^1 $. We say that $\psi$ is a lift of $\sigma$ if the projection of $\psi$ on $\S^1 = \R/(2 \pi\Z)$ equals $\sigma$. If $x \in \R$, we denote by $d(x,\Z)$ its signed distance to an integer. That is $d(x,\Z)= x - k_x$ where $k_x$ is the closest integer to $x$. When $x$ is a half integer, we set $d(x,\Z) = \frac 1 2$ if $x > 0 $ and $d(x,z) = - \frac 1 2$ if $x < 0$. This choice ensures that $d(x,\Z) = - d(-x, \Z)$ for any $x$. We say that $\psi$ has a vortex at the face $f$ if $\curl \psi(f) := \sum_{e = (e_-, e_+) \in \p f} d(\psi(e_+) - \psi(e_-), \Z) \neq 0$ where the sum is performed over the edges of $f$ in counterclockwise orientation. By a slight abuse of notation we will denote $\curl(\sigma) = \curl( \psi)$ if $\psi$ is a lift of $\sigma$. This can create ambiguity only if some of the gradients along edges $e = (e_+ , e_-)$, $|\gamma(e_+) - \gamma(e_-)| = \frac{1}{2}$. The main result of \cite{garroni2024clearingoutdipolesminimisers2dimensional} on the XY model can be informally stated as follows. Consider a boundary condition $\sigma: \partial V\to \S^1$. Assume that $\sigma$ revolves around the outer face exactly once,\margin{maybe define this ?} always going in the counterclockwise direction. Then there exists a harmonic map $\hat \sigma : V\to \S^1$ and a face $f_0$ such that $\sigma$ has a unique vortex at the face $f$ with $\curl(\sigma) f = 1$ inside $V \backslash \p V$ which is located at $f_0$. 
This result does not seem to suffice to deduce the existence of a non-constant harmonic map over the whole $\Z^2$ from this result. We will rather make use of Theorem 4.2 of \cite{garroni2024clearingoutdipolesminimisers2dimensional}  which is an intermediary step in their paper. Let $G = (V,E)$ be a bidirectional finite graph (not necessarily planar), that is to say an oriented graph that is such that all edges appear in both directions. If $e$ is an edge we denote by $e_+ \in V$ its beginning and by $e_-$ its end. We denote by $-e$ the edge in the reversed direction that is if $e = (e_+, e_-)$, then $-e =  (e_- , e_+)$.
Recall that if $\gamma: E \to \R$ satisfies $\gamma(-e) = - \gamma(e)$ for all $e \in E$, we say that $\gamma$ is a one-form on $G$. We denote by $\div(x) = \sum_{e, e_+ = x} \gamma(x)$. We say that $\gamma$ has integral divergence if for all $x \in V \backslash \p V$, $\div(x) \in \Z$. We denote by $\| \gamma \|_{\p} = \sum_{x \in \p V } \sum_{v \sim x, v \notin \p V} |\gamma((x,v))|$. We denote by $\Flux(\gamma) = \sum_{x \in \p V } \sum_{v \sim x, v \notin \p V}  \gamma(x,v)$

\begin{theorem}[Theorem 4.2 of \cite{garroni2024clearingoutdipolesminimisers2dimensional}]
  \label{theorem_integer_divergence}
Let $\alpha$ be a one-form with integral divergence with $\Flux(\alpha) \in {\pm 1}$ and $\|\alpha\|_{\partial} = 1$. Then, there exists a one-form $\gamma$  and a vertex $v_0$ of $V \backslash p V such that $ such that, for all $e \in E$, $|\gamma(e)| \le |\alpha(e)|$ and $\gamma(e) = \alpha(e)$ for all edges from $\partial V \times V$ and such that $div(\gamma)(x) = - \Flux(\alpha) \delta_{x_0}(x)$.
\end{theorem}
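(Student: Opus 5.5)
The plan is to recast the statement as a single-commodity max-flow problem and apply the max-flow/min-cut theorem. The vertex $v_0$ will be chosen by a combinatorial argument on cuts that uses the integrality of the divergence.

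\textbf{Step 1: reductions.} Since $|\Flux(\alpha)| = 1 = \|\alpha\|_\p$, every edge $(x,v)$ with $x\in\p V$, $v\notin \p V$ carries $\alpha$ with the same sign. Replacing $\alpha$ by $-\alpha$ if needed, we may assume $\Flux(\alpha)=1$, so all this boundary flow enters the interior. Next, contract $\p V$ into a single super-vertex $b$. In the resulting network every vertex has integral divergence, $b$ has divergence $+1$, and the interior divergences sum to $-1$.

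\textbf{Step 2: the network.} Orient each edge in the direction in which $\alpha$ is positive and give it capacity $|\alpha(e)|$. A flow $\gamma$ of value $1$ from $b$ to some interior vertex $v_0$ that respects these capacities and orientations automatically satisfies $|\gamma|\le|\alpha|$ and $\div\gamma = -\delta_{v_0}$ in the interior. Moreover, the edges leaving $b$ have total capacity exactly $1$, so they are saturated, which forces $\gamma=\alpha$ on $\p V\times V$. (Any cycles in $\gamma$ can be cancelled without harming these properties.) So it suffices to find $v_0$ such that the max flow from $b$ to $v_0$ is at least $1$.

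\textbf{Step 3: choosing $v_0$ (main obstacle).} By max-flow/min-cut, for a given $v_0$ it suffices that every set $S$ with $b\in S$ and $v_0\notin S$ has outgoing capacity $c(S)\ge1$. Writing $f(S)$ for the net $\alpha$-outflow of $S$, we have $c(S)\ge f(S) = 1+\sum_{x\in S\setminus\{b\}}\div\alpha(x)$. This is an integer, so a cut can only fail if $f(S)\le 0$; call such sets \emph{bad}. The function $f$ is modular, i.e. $f(S\cup S')+f(S\cap S')=f(S)+f(S')$. I would exploit this by taking a bad set $M\ni b$ that is minimal for inclusion ($V$ itself is bad, since $f(V)=0$) and choosing $v_0\in M\setminus\{b\}$, hoping to show that any $S\ni b$ avoiding $v_0$ has $f(S)\ge1$. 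If $S$ were bad, modularity gives
\[
f(S\cap M)=f(S)+f(M)-f(S\cup M),
\]
and one wants to derive a contradiction with the minimality of $M$. To make this work I will likely need to work with the true capacities rather than with $f$. The key fact is that $c(S)-f(S)$ equals the $\alpha$-flow entering $S$, so $c$ is submodular. The idea is to choose $v_0$ so that the mass $-\div\alpha$ that cannot be routed away lies within a minimal ``sink closure''; for instance, take $v_0$ a vertex with $\div\alpha(v_0)\le -1$ that is reachable from $b$ in the residual graph.

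\textbf{Fallback.} If the cut argument resists, I would instead use a flow decomposition. Decompose $\alpha$ into cycles and paths between vertices of positive and negative integer divergence, where $b$ is a unit source. Follow a unit of flow out of $b$; whenever it ends at a sink $y$ that also receives flow from interior integer sources, exchange path segments. The integrality of all supplies guarantees that an augmenting-path procedure in the residual network (Ford–Fulkerson started from the integral supply vector) concentrates the unit coming from $b$ at a single vertex. This yields $\gamma$ and $v_0$ as required.
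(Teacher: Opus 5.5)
\textbf{Step~2 is where the argument fails.} The paper only quotes this result from Garroni--Petrache--Spadaro, so your proof has to stand on its own. When you orient each edge along $\alpha$ and allow flow only in that direction, you look for a sign-preserving sub-flow. That is strictly stronger than the required $|\gamma(e)|\le|\alpha(e)|$, and it is false.

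Take the $4$-cycle $p,y_1,z,y_2$ with $\partial V=\{p\}$ and $\alpha(p,y_1)=\alpha(p,y_2)=\alpha(z,y_1)=\alpha(z,y_2)=\tfrac12$. Then $\div\alpha(y_1)=\div\alpha(y_2)=-1$, $\div\alpha(z)=1$ and $\Flux(\alpha)=\|\alpha\|_\partial=1$. In your directed network, $z$ has no incoming arc. So the maximal flow from $p$ is $\tfrac12$ to $y_1$, $\tfrac12$ to $y_2$, and $0$ to $z$. Your own bad sets $S=\{p,y_1\}$ and $S=\{p,y_2\}$ have $f(S)=0$ and outgoing capacity $\tfrac12$. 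Hence no choice of $v_0$ can make Step~3 succeed. The theorem still holds here with $x_0=y_1$: send $\tfrac12$ along $p\to y_1$ and $\tfrac12$ along $p\to y_2\to z\to y_1$. This traverses the edge $zy_2$ against $\alpha$, and such reversals are genuinely needed.

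\textbf{How to repair it.} Give every edge capacity $|\alpha(e)|$ in both directions. Saturation at $b$ still forces $\gamma=\alpha$ on boundary edges, because all of them carry $\alpha\ge 0$ out of $b$ with total $1$. For a set $C$ of interior vertices, the cut value $c(C)=\sum_{e\in\delta(C)}|\alpha(e)|$ now dominates the integer $|\sum_{x\in C}\div\alpha(x)|$. So a cut can only fail if it has zero total divergence and value $<1$.

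Step~3 still needs a real idea, though. Modularity of $f$ alone gives nothing. An arbitrary $v_0$ in an inclusion-minimal bad set can also fail, since two distinct minimal bad sets may intersect in a set that is not bad.

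What works is the laminar structure of minimum cuts of the symmetric submodular function $c$, equivalently a Gomory--Hu tree:
\begin{enumerate}
\item For each interior $v$, let $C_v$ be the inclusion-minimal minimizer of $c$ among interior sets containing $v$. It is unique, because minimizers are closed under intersection.
\item Submodularity, together with $c(A)+c(B)\ge c(A\setminus B)+c(B\setminus A)$, shows that $\{C_v\}$ is laminar. Hence its maximal members partition $V\setminus\partial V$.
\item Suppose every interior $v$ had min-cut value $<1$ to $b$. Then each maximal $C_v$ would have $c<1$, hence zero total divergence. This contradicts $\sum_{x\notin\partial V}\div\alpha(x)=-\Flux(\alpha)=-1$.
\end{enumerate}
So some $v_0$ has min cut $\ge 1$, and max-flow/min-cut yields $\gamma$.

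Your fallback paragraph does not fill this gap. It asserts, without argument, that an augmenting procedure concentrates the unit leaving $b$ at a single vertex, which is exactly what has to be proved.
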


Let us explain how to deduce the existence of a global harmonic map.\footnote{This proof was found using ChatGPT Astra. However, the formulation below has been written by the authors.} Let $G_n = \Z^2/n \Z^2 \cup \{v_0\} $ denote the $n \times n $ square torus to which we append one special vertex $v_0$ which we connect to all vertices in $\Z^2/n \Z^2$. We set $\p V = \{v_0\}$. Consider the energy functional \[E(\gamma) = \frac{1}{2}\sum_{e \in E, e_+ \neq v_0 , e_- \neq v_0} ( 1 - \cos( 2 \pi \gamma(e))\] defined on the set of one-forms on $G_n$. 
Let $\mathcal{M}_n$ denote the set of one-forms $\gamma$ with integral divergence and that are such that $\gamma((v_0,x)) = -\frac{1}{n^2}$ and that satisfies $|\gamma(e)| \le \frac 1 2 $ for every edge $e$. This set is non-empty. Indeed, let $\Delta$ be the Laplace operator on real valued functions defined on the vertices of $\Z^2/n \Z^2$. Let $\Delta^{-1}$ be its inverse on the set of functions $f: \Z^2/n \Z^2 \to \R$ that satisfy $\sum_{x \in \Z^2/n \Z^2 } f(x) = 0$. Let $x_0 = (0,0)$. For $e$ inside the torus, let $\gamma(e) =  \Delta^{-1} (\delta_{x_0} - \frac{1}{N^2}) (e^+) - \Delta^{-1} (\delta_{x_0} - \frac{1}{N^2}) (e^-)$. We extend this one-form by $\gamma((v_0,w))= - \frac{1}{n^2}$ for any $w$ in $\Z^2/n \Z^2$. The one-form $\gamma$ has integral divergence. To see that the gradient of $\gamma$ is bounded by $\frac 12$, one can use the maximum principle on the function $(\delta_{x_0} - \frac{1}{N^2}) (\cdot ) - (\delta_{x_0} - \frac{1}{N^2}) (\cdot + e ) $ where $e \in \{\pm (1,0), \pm (0,1)\}$ which is harmonic on the torus except at $x_0$ and $x_0 - e $ and takes the value $\frac 1 4 - \frac 1 N^2$ on $x_0$ and $x_0 - e$.

As $\mathcal{M}_n$ is compact, the functional $E$ has a minimizer in $\mathcal{M}_n$. Any one-form $\alpha \in \mathcal{M}$ satisfies $\Flux(\alpha) = \|\alpha\|_{\p} =1$. We may thus apply Theorem \ref{theorem_integer_divergence} to any $\alpha \in \mathcal{M}$ to  obtain a one-form $\gamma \in \mathcal{M}$ that has exactly one vertex with non-zero integral divergence and that satisfies $E(\gamma) \le E(\alpha)$ as $x \mapsto 1 - \cos(x)$ is a nondecreasing function in $[0, \frac{1}{2}]$.
 Thus, there exists a minimizer $\gamma_0 = \gamma_0^n$ of $E$ over $\mathcal{\gamma}$ that has exactly one vertex in $\Z^2/n \Z^2$ on which $\gamma$ has non-zero integral divergence. By invariance by translation in the torus, we may assume that this vertex is $(0,0)$. Let $\gamma_0^{\infty} $ be a one-form defined on $\Z^2$ that is obtained by taking a subsequential limit of $(\gamma_0^n)_{n \ge 0}$ for the topology of pointwise convergence on edges of $\Z^2$. Note that $\gamma_0^{\infty}$ has $0$ divergence everywhere except at the face $(0,0)$ where it must have divergence $1$.
  We can now use $\gamma_0^{\infty}$ to define a non-trivial harmonic map $\sigma$ on the dual graph of $\Z^2 $ (that is $\Z^2 + (\frac 12,\frac 12)$). We set $\sigma(\{\frac 12, \frac 12\}) = 0$, and we then define $\sigma(x) = \prod_{e \in c_{x}}\exp( 2 i\pi  \gamma_0^{\infty}(e^{\dagger}))$ where $c_x$ denotes any path in the dual of $\Z^2$ between $[-\frac 12, \frac 12]$ and $x$ and $e^{\dagger}$ denotes the dual edge of $e$.
  This is well-defined as $\gamma_0^{\infty}$ has integral divergence. The map $\sigma$ is a global harmonic map. Indeed, assume that $\sigma'$ is a finite modification of $\sigma$ that coincides with $\sigma$ outside the dual of $\Lambda_{n_0}$. Let us write $\sigma'(x) = \sigma(x) \exp( 2 i \pi  h_x)$ for some $h_x \in (-\frac 12, \frac 12 ]$ and set $\gamma'(e)$ as the  representative in $(-\frac 12 , \frac 12]$ of $ \gamma(e) + h_{e_+} - h_{e_-}$ mod $\Z$. For $n \ge 1$, we define similarly form $\gamma'_n(e)$ on $G_n$ as the representative mod $\Z$ of $\gamma_n(e)+ h_{e^{\dagger}_+} - h_{e^{\dagger}_-}$ for edges that do not have $v_0$ as an endpoint, and $\gamma'_n((v_0,x)) = - \frac{1}{n^2}$. 
   Note that $\gamma'_n$ is in $\mathcal{M}_n$. Indeed, for any vertex $v \in  \Z^2/n \Z^2$, we have modulo $\Z$, $\div \gamma'_n(v) = \div \gamma_n(v) + \sum_{e , e_+  = v  } (h(e_+) - h(e_-))= \div \gamma_n(v) $. We thus have $E_n(\gamma_n) \le E_n(\gamma'_n) $ as $\gamma_n$ is a minimizer. 
   Now, notice that  $  E_n(\gamma_n) - E_n(\gamma'_n) = 2 \sum_{e , e_{+ } \in \operatorname{Supp}(h)} \cos(2 \pi (\gamma_n(e) )) - \cos(2 \pi (\gamma_n(e) + h_{e_+} - h_{e_-}))$ for $n \ge n_0$. This sum has a fixed number of term that does not depend on $n$. Passing to the limit, we thus get \[ \operatorname{DE}_{\Lambda_{n_0}}(\sigma) - \operatorname{DE}_{\Lambda_{n_0}}(\sigma' ) =  \sum_{e , e_{+ } \in \operatorname{Supp}(h)} \cos(2 \pi (\gamma_n(e) )) - \cos(2 \pi (\gamma_n(e) + h_{e_+} - h_{e_-}))  \le 0 ~.\]

 Let us now show that the non-trivial harmonic map we have build has Dirichlet energy at least $C \log(n)$ in $\Lambda_n$. Let $\gamma = \gamma_0^{\infty}$. It suffices to show that $\sum_{e^{\dagger} \in \overrightarrow{\partial \Lambda_k}} ( 1 - \cos(2 \pi \gamma(e) ))  \ge \frac{c}{k}$ for any $k \ge 1$ where the sum runs over edges of $\partial \Lambda_k$ in the clockwise direction. As, for any $x \in [- \frac{1}{2}, \frac{1}{2}]$, we have $1 - \cos(2 \pi x) \ge 2 \pi^2 x^2$, it suffices to show that $\sum_{e^{\dagger} \in \overrightarrow{\partial \Lambda_k}} \gamma(e)^2 \ge \frac c k $

 This follows from Cauchy--Schwarz inequality and the fact that $\gamma$ has zero divergence at every vertex except at the origin $x_0$ where it has divergence $1$. Indeed,
 \[ \sum_{e^{\dagger} \in \overrightarrow{\partial \Lambda_k}} \gamma(e)^2 \ge \frac{1}{8 k +4}  \sum_{e^{\dagger} \in \overrightarrow{\partial \Lambda_k}} |\gamma(e)| \] as there are $8 k +4$ edges in $\partial \Lambda_k$.

 By the discrete divergence theorem, \[\sum_{e^{\dagger} \in \overrightarrow{\partial \Lambda_k}} \gamma(e) = \sum_{x \in \Lambda_k} \div(\gamma)(x) = \div(\gamma)(x_0) = 1\, . \] Now as, $\sum_{e^{\dagger} \in \overrightarrow{\partial \Lambda_k}} |\gamma(e)| \ge 1$, we obtain
$\sum_{e^{\dagger} \in \overrightarrow{\partial \Lambda_k}} \gamma(e)^2 \ge  \frac{1}{8 k +4}.$

\section{Open problems}

\begin{OP}\label{op.logn}
In Theorem \ref{th.DE} when $N=2$, for any $d\geq 2$ we have a lower bound in $n^{d-2} \log(n)$ thanks to Proposition \ref{pr.ExS1}
But the upper bound in Theorem \ref{th.DE} has an additional $\log(n)$, it is in $n^{d-2} \log^2(n)$. What is the optimal scaling of the energy in the bulk for $N=2$ lattice harmonic maps ?
\end{OP}

\begin{OP}\label{op.vortex}
Prove that Theorem \ref{th.SG} does not hold for the $2d$ XY model ($d=2$ and $N=2$). See Figure \ref{f.XY}.
\end{OP}

\begin{OP}\label{op.perco}
Even though Theorem \ref{th.SG} should not apply for $XY$ Gibbs measures (open problem above), what about Corollary \ref{c.SG2} ? We believe that maybe surprisingly it should still hold. Of course the mechanism would be different (one would not be able to extract it from Theorem \ref{th.SG}). It would have a chance to be correct if $\beta$ is so large that the density of vortices is low enough to have zero vortices in a $\Lambda_L$ box with high probability. In this case, Corollary \ref{c.SG2} would have a chance to hold.
\end{OP}

\bibliographystyle{alpha}
\bibliography{references-S2}

\end{document}